\theoremstyle{plain}
\theoremstyle{definition}
\newtheorem{theorem}{Theorem}[subsection]
\newtheorem{thm}{Theorem}[subsubsection]
\newtheorem{lemma}[theorem]{Lemma}
\newtheorem{prop}[theorem]{Proposition}
\theoremstyle{definition}
\newtheorem{definition}[theorem]{Definition}
\newtheorem{qu}[theorem]{Question}
\newtheorem{eg}[theorem]{Example}
\newtheorem{conjecture}[theorem]{Conjecture}
\theoremstyle{remark}
\newtheorem{remark}[theorem]{Remark}
\renewcommand{\AA}{\mathbb{A}}
\newcommand{\QQ}{\mathbb{Q}}
\newcommand{\RR}{\mathbb{R}}
\newcommand{\ZZ}{\mathbb{Z}}
\newcommand{\GG}{\mathbb{G}}
\newcommand{\PP}{\mathbb{P}}
\newcommand{\Lcal}{{\mathcal L}}
\newcommand{\Ocal}{{\mathcal O}}
\newcommand{\Ifr}{{\mathfrak I}}
\newcommand{\mfr}{{\mathfrak m}}
\newcommand{\Lscr}{{\mathscr L}}
\DeclareMathOperator{\Gal}{Gal}
\DeclareMathOperator{\inv}{inv}
\DeclareMathOperator{\Spec}{Spec}
\DeclareSymbolFont{cyrletters}{OT2}{wncyr}{m}{n}
\DeclareMathSymbol{\Sha}{\mathalpha}{cyrletters}{"58}
\newcommand{\defi}[1]{\textsf{#1}} 
\newcommand{\BM}{Brauer-Manin }
\newcommand{\Br}{\textup{Br}}
\newcommand{\et}{\textup{\'et}}
\g@addto@macro\bfseries{\boldmath}  
\begin{document}
	
	\begin{title}
		{Non-invariance of the Brauer-Manin obstruction for surfaces}  
	\end{title}
	\author{Han Wu}
	\address{University of Science and Technology of China,
		School of Mathematical Sciences,
		No.96, JinZhai Road, Baohe District, Hefei,
		Anhui, 230026. P.R.China.}
	\email{wuhan90@mail.ustc.edu.cn}
	\date{}
	\subjclass[2020]{Primary 11G35; Secondary 14G12, 14F22, 14G05.}
	\keywords{rational points, Hasse principle, weak approximation, Brauer-Manin obstruction.}




	\begin{abstract} 
		In this paper, we study the properties of weak approximation 
		with \BM obstruction 
		and the Hasse principle with \BM obstruction for surfaces with respect to field extensions of number fields. We assume a conjecture of M. Stoll.
		For any nontrivial extension of number fields $L/K,$ we construct two kinds of smooth, projective, and geometrically connected surfaces defined over $K.$ For the surface of the first kind, it has a $K$-rational point, and satisfies weak approximation with \BM obstruction 
		off $\infty_K,$ while its base change by $L$ does not so off $\infty_L.$ 
		For the surface of the second kind, it is a counterexample to the Hasse principle explained by the \BM obstruction, while the failure of the Hasse principle of its base change by $L$ cannot be so. We illustrate these constructions with explicit unconditional examples.
	\end{abstract} 
	
	\maketitle

	\section{Introduction}
	
	\subsection{Background}
	For a proper scheme $X$ over a number field $K,$ if its $K$-rational points set $X(K)\neq\emptyset,$ then its adelic points set $X(\AA_K)\neq\emptyset.$  The converse, as has been known, does not always hold. We say that $X$ is a \defi{counterexample to the Hasse principle} if the set $X(\AA_K)\neq\emptyset$ whereas the set $X(K)=\emptyset.$ Let $S\subset \Omega_K$ be a finite subset. By the diagonal embedding, we always view $X(K)$ as a subset of $X(\AA_K)$ (respectively of $X(\AA_K^S)$). 
	We say that $X$
	satisfies \defi{weak approximation} (respectively \defi{weak approximation off $S$}) if $X(K)$ is dense in $X(\AA_K)$ (respectively in $X(\AA_K^S)$), cf. \cite[Chapter 5.1]{Sk01}. Manin \cite{Ma71} used the Brauer group of $X$ to define a closed subset $X(\AA_K)^{\Br}\subset X(\AA_K),$ and showed that this closed subset can explain some failures of the Hasse principle and nondensity of $X(K)$ in $X(\AA_K^S).$  The global reciprocity law gives an inclusion: $X(K)\subset X(\AA_K)^\Br.$ 
	We say that the failure of the Hasse principle of $X$ is \defi{explained by the Brauer-Manin obstruction} if the set $X(\AA_K)\neq\emptyset$ and the set $X(\AA_K)^{\Br}=\emptyset.$ 
	We say that $X$ satisfies \defi{weak approximation with Brauer-Manin obstruction} (respectively \defi{with Brauer-Manin obstruction off $S$}) if $X(K)$ is dense in $X(\AA_K)^\Br$ (respectively in $pr^S(X(\AA_K)^\Br)$). 
	For a smooth, projective, and geometrically connected curve $C$ defined over a number field $K,$ assume that the Tate-Shafarevich group and the rational points set of its Jacobian are both finite. By the dual sequence of Cassels-Tate, Skorobogatov \cite[Chapter 6.2]{Sk01} and Scharaschkin \cite{Sc99} independently observed that $C(K)=pr^{\infty_K}(C(\AA_K)^\Br).$ In particular, if this curve $C$ is a counterexample to the Hasse principle, 
	then this failure can be explained by the Brauer-Manin obstruction. Stoll \cite{St07} generalized this observation, and made a conjecture that for any smooth, projective, and geometrically connected curve, it satisfies weak approximation with Brauer-Manin obstruction off $\infty_K\colon$ see Conjecture \ref{conjecture Stoll} for more details.
	
	\subsection{Questions}\label{Questions}
	Let $L/K$ be a nontrivial extension of number fields. Let $S\subset\Omega_K$ be a finite subset, and  let $S_L\subset \Omega_L$ be the subset of all places above $S.$ Given a smooth, projective, and geometrically connected variety $X$ over $K,$ let $X_L=X\times_{\Spec K} {\Spec L}$ be its base change by $L.$ In this paper, we consider the following questions.
	\begin{qu}\label{question on WA1}
		If the variety $X$ has a $K$-rational point, and satisfies weak approximation with Brauer-Manin obstruction off $S,$ must $X_L$ also satisfy weak approximation with Brauer-Manin obstruction off $S_L?$	
	\end{qu}
	\begin{qu}\label{question on HP}		
		Assume that the varieties $X$ and $X_L$ are counterexamples to the Hasse principle. If the failure of the Hasse principle of $X$ is explained by the Brauer-Manin obstruction, must the failure of the Hasse principle of $X_L$ also be explained by the Brauer-Manin obstruction?		
	\end{qu}

	\subsection{Main results} In this paper, we will construct smooth, projective, and geometrically connected surfaces to give negative answers to Questions \ref{Questions}.
	
	\subsubsection{A negative answer to Question \ref{question on WA1}}
	For any number field $K,$ 	
	assuming Stoll's conjecture, Liang\cite{Li18} found a quadratic extension $L,$ and constructed	
	a $3$-fold to give a negative answer to Question \ref{question on WA1}.  When $L=\QQ(\sqrt{5})$ and $K=\QQ,$ using the construction method, he gave an unconditional example with explicit equations  in loc. cit.  The author \cite{Wu21} generalized his argument to any nontrivial extension of number fields. The varieties constructed there, are $3$-folds. In this paper, we will prove the same statement for smooth, projective, and geometrically connected surfaces.

	For any nontrivial extension of number fields $L/K,$ assuming Stoll's conjecture, we have the following theorem to give a negative answer to Question \ref{question on WA1}.
	\begin{thm}[Theorem \ref{theorem main result: non-invariance of weak approximation with BMO}]
		For any nontrivial extension of number fields $L/K,$ assuming Stoll's conjecture, there exists a smooth, projective, and geometrically connected surface  $X$ defined over $K$ such that
		\begin{itemize}
			\item the surface $X$ has a $K$-rational point, and satisfies weak approximation with Brauer-Manin obstruction 
			off $\infty_K,$
			\item the surface $X_L$ does not satisfy weak approximation with Brauer-Manin obstruction 
			off $T$ for any finite subset $T\subset \Omega_L.$ 
		\end{itemize}
	\end{thm}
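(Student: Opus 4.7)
The plan is to adapt the product construction of Liang \cite{Li18} and Wu \cite{Wu21}, who built a $3$-fold of the form $C \times Y$ with $C$ a curve and $Y$ a carefully chosen Châtelet surface. Since $\dim(C \times Y) = 3$, reducing to surfaces forces us to replace $Y$ by a $1$-dimensional variety. The arithmetic heart of the obstruction still lives on the curve factor, so the natural choice is $X = C \times_K \PP^1_K$, or more generally the projectivization of a rank-$2$ vector bundle on $C$.

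The first step is to reuse or refine the curve construction of \cite{Wu21} to produce a smooth, projective, geometrically connected curve $C/K$ with: $C(K)\neq\emptyset$; $C$ satisfies weak approximation with \BMo off $\infty_K$ under Stoll's conjecture; and for every finite subset $T\subset\Omega_L$, the curve $C_L$ admits an adelic point in $C_L(\AA_L)^{\Br}$ whose component at some place outside $T$ is not approximated by any $L$-rational point. Equivalently, one must arrange a \BMo on $C_L$ at infinitely many places of $L$, so that excising a finite set cannot remove all of them.

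The second step is then essentially formal. Set $X := C\times_K\PP^1_K$; it is smooth, projective, geometrically connected and inherits a $K$-point from $C(K)\neq\emptyset$. By the projective bundle formula one has $\Br(X) = \Br(C)$ (and likewise $\Br(X_L) = \Br(C_L)$), so the Brauer-Manin pairing on $X(\AA_K)$ factors through the projection $X(\AA_K)\to C(\AA_K)$, and similarly over $L$. Weak approximation with \BMo for $X/K$ off $\infty_K$ then reduces to the corresponding statement for $C$, using that $\PP^1$ satisfies weak approximation unconditionally and $\Br(\PP^1_F) = \Br(F)$ for every number field $F$. Dually, any adelic point witnessing the failure on $C_L$ lifts to $X_L$ via the canonical section, and any hypothetical approximation by $L$-rational points of $X_L$ would project down to approximation on $C_L$; so the failure on $X_L$ off every finite $T$ follows from that on $C_L$.

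The main obstacle is the refinement required in Step 1: producing the curve $C$ with a \BMo at infinitely many places of $L$ simultaneously, while preserving the $K$-rational point and the Stoll-type behaviour of $C$ over $K$. This amounts to exhibiting enough Brauer classes on $C_L$, or a single class with nontrivial local invariants at infinitely many places, that remain invisible over $K$. One natural route is to take $C$ to be a suitable cover or twist whose Jacobian has nontrivial Tate-Shafarevich behaviour under base change $K\to L$, and to use the Cassels-Tate pairing on $\Sha(J_{C_L})$ to locate the infinitely many bad places. Once such $C$ is constructed, the reduction from the surface $X$ to the curve $C$ via the projective bundle structure is immediate.
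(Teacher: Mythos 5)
Your proposal contains a genuine gap at the step you yourself flag as the ``main obstacle,'' and that gap cannot be filled: the curve $C$ you want cannot exist under the standing hypotheses. If $C$ is a smooth, projective, geometrically connected curve over $K$ with $C(L)$ finite and you want $C_L$ to fail weak approximation with Brauer--Manin obstruction off \emph{every} finite $T\subset\Omega_L$ (in particular off $T\supset\infty_L$), this is precisely the negation of Stoll's conjecture for $C_L$. But the whole point of the construction in this circle of ideas is to build the failure out of something that is provably (or conjecturally) \emph{safe}, not to manufacture a counterexample to Stoll's conjecture over $L$. No amount of tinkering with Jacobians, twists, or Cassels--Tate will produce such a $C$: for any smooth geometrically connected curve with finite $C(L)$, Stoll's conjecture forces $C(L)=pr^{\infty_L}(C_L(\AA_L)^{\Br})$, which is exactly weak approximation with Brauer--Manin obstruction off $\infty_L$. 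Your proposed surface $X=C\times\PP^1$ then inherits this (all fibers of $X\to C$ are smooth $\PP^1$'s satisfying weak approximation, and $\Br(X)\cong\Br(C)$ by the projective bundle formula), so $X_L$ would \emph{not} fail off $\infty_L$. The product construction therefore cannot work.

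The missing idea is to use a \emph{singular} fiber, not a trivial bundle. The paper constructs a pencil $\beta\colon X\to C$ of conics with two special fibers: over the finitely many $K$-points of $C$ the fiber is a smooth conic with a rational point (hence satisfies weak approximation), while over the finitely many points of $C(L)\setminus C(K)$ the fiber is the singular curve $x_0^2-x_1^2=0$, a union of two lines meeting at a point. That reducible curve has $\Br(C_0)\cong\Br(K)$ (Lemma~\ref{lemma two projective lines Brauer group}) but still fails weak approximation off any $S$ because one may choose local points on \emph{different} irreducible components at different places (Lemmas~\ref{lemma prevariety not satifying WA} and~\ref{lemma two projective lines not satifying WA}). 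Since the Brauer group of the fiber is trivial, this failure is invisible to the Brauer--Manin obstruction. So the desired obstruction on $X_L$ comes entirely from a fiber that exists \emph{only} over the $L$-points that are not $K$-points — it never appears over $K$, which is why $X$ itself behaves well. This is the mechanism that reduces the dimension from $3$ (Châtelet surface fibers in Liang/Wu) to $2$ (curve fibers): the curve you need as a bad fiber is not a smooth curve whose $\Br$ carries an obstruction, but a \emph{reducible} curve with trivial $\Br$ that nonetheless fails approximation geometrically. Your proposal, with all fibers $\PP^1$, cannot access this.

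Concretely, the correct construction first produces (via Lemma~\ref{lemma stoll conjecture}) a curve $C$ of type $I$ (so $C(K)\subsetneq C(L)$, both finite nonempty, Stoll holds for $C$), then chooses $\gamma\colon C\to\PP^1$ with $\gamma(C(K))=\{\infty\}$, $\gamma(C(L)\setminus C(K))=\{0\}$, \'etale over the branch locus of the pencil (Lemma~\ref{lemma choose base change morphism}), and pulls back the universal conic pencil $u_0(x_0^2+x_1^2-x_2^2)+u_1(x_0^2-x_1^2)=0$. Positive and negative conclusions then follow from the fibration Lemmas~\ref{lemma fiber criterion for wabm} and~\ref{lemma fiber criterion for not wabm}. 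This is what your plan is missing.
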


	When $K=\QQ$ and $L=\QQ(i),$ using the construction method given in Theorem \ref{theorem main result: non-invariance of weak approximation with BMO}, we give an explicit unconditional example  in Subsection \ref{subsection main example1}. The smooth, projective, and geometrically connected surface  $X$ is defined by the following equations: 
	\begin{equation*}
		\begin{cases}
			(w_0w_2+w_1^2+16w_2^2)(x_0^2+x_1^2-x_2^2)+(w_0w_1+w_1w_2)(x_0^2-x_1^2)=0\\ 
			w_1^2w_2=w_0^3-16w_2^3
		\end{cases}
	\end{equation*}
	in $\PP^2\times \PP^2$ with bi-homogeneous coordinates $(w_0:w_1:w_2)\times(x_0:x_1:x_2).$
	
	
	%
	%
	%
	%
	%
	%
	%
	%
	%
	\subsubsection{A negative answer to Question \ref{question on HP}} 
	For any number field $K,$ suppose that Stoll's conjecture holds. Assuming some conditions on the nontrivial
	extension $L$ over $K,$ the author \cite{Wu21} constructed	
	a $3$-fold to give a negative answer to Question \ref{question on WA1}. Unconditional examples with explicit equations were given  in loc. cit. The varieties constructed there, are $3$-folds. In this paper, we will prove the same statement for smooth, projective, and geometrically connected surfaces.
	
	For any nontrivial extension of number fields $L/K,$ assuming Stoll's conjecture, we have the following theorem to give a negative answer to  Question \ref{question on HP}.
	
	\begin{thm}[Theorem \ref{theorem main result: non-invariance of the Hasse principle with BMO for odd degree}]
		For any nontrivial extension of number fields $L/K,$ assuming Stoll's conjecture, there exists a smooth, projective, and geometrically connected surface  $X$ defined over $K$ such that
		\begin{itemize} 
			\item the surface $X$ is a counterexample to the Hasse principle, and its failure of the Hasse principle is explained by the Brauer-Manin obstruction,
			\item the surface $X_L$ is a counterexample to the Hasse principle, but its failure of the Hasse principle cannot be explained by the Brauer-Manin obstruction.
		\end{itemize}
	\end{thm}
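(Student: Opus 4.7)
The plan is to adapt the three-fold construction of \cite{Wu21} to the surface setting by replacing the product $C \times Y$ used there with a conic bundle over a single curve, thereby saving one dimension. First, I fix a smooth, projective, geometrically connected curve $C/K$ of genus at least two satisfying $C(K) = \emptyset$, $C(\AA_K) \neq \emptyset$, and $C_L(L) \neq \emptyset$; by Faltings, $C_L(L)$ is then finite. Under Stoll's conjecture these conditions give $C(\AA_K)^\Br = \emptyset$ and $C_L(\AA_L)^\Br \neq \emptyset$, so $C$ fails the Hasse principle over $K$ with the failure explained by the Brauer-Manin obstruction, whereas $C_L$ acquires $L$-rational points. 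Curves of this shape can be produced for any nontrivial extension $L/K$ by standard twist and descent arguments, and one can adapt those already employed in \cite{Wu21}. Next, I construct a smooth projective conic bundle $\pi \colon X \to C$ whose generic fiber is a nontrivial conic, which is everywhere locally soluble, and whose fiber over each point of the finite set $C_L(L)$ has no $L$-rational point. This last requirement reduces to arranging that the defining quaternion class $\alpha \in \Br(K(C))$ specialize to a nontrivial class in $\Br(L)$ at each of the finitely many $L$-points of $C_L$, a finite list of constraints satisfiable by a standard density/approximation argument.

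Granting this construction, the first bullet is immediate. The map $\pi$ sends $X(K)$ into $C(K) = \emptyset$, so $X(K) = \emptyset$; local solubility of the fibers together with $C(\AA_K) \neq \emptyset$ gives $X(\AA_K) \neq \emptyset$; and functoriality of the Brauer-Manin pairing under $\pi^* \colon \Br(C) \to \Br(X)$ shows that the image of $X(\AA_K)^\Br$ in $C(\AA_K)$ lies in $C(\AA_K)^\Br = \emptyset$, whence $X(\AA_K)^\Br = \emptyset$. Thus $X$ is a counterexample to the Hasse principle whose failure is explained by the Brauer-Manin obstruction.

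The second bullet is the crux. Emptiness of $X_L(L)$ is built into the construction, since any $L$-point of $X_L$ would project to some point of $C_L(L)$ and produce an $L$-point of the corresponding pointless fiber; non-emptiness of $X_L(\AA_L)$ follows from local solubility of $\pi$. The main technical obstacle is to show $X_L(\AA_L)^\Br \neq \emptyset$. For this I would analyze $\Br(X_L)/\pi^*\Br(C_L)$ via the theory of conic bundles over curves in the style of Colliot-Th\'el\`ene--Sansuc--Swinnerton-Dyer, isolating the ``vertical'' Brauer classes supported on the bad fibers of $\pi$. Pullback obstructions automatically vanish on any adelic point of $C_L$ lying in $C_L(\AA_L)^\Br$, which is nonempty by the choice of $C$. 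The vertical obstructions are eliminated by perturbing the local components of such an adelic point at finitely many auxiliary places via a Harari-style formal lemma, after which a suitable adelic lift to $X_L$ lies in $X_L(\AA_L)^\Br$. Verifying that the perturbations can be performed while staying inside both $C_L(\AA_L)^\Br$ and the locally soluble locus of $\pi$, and that they simultaneously annihilate all vertical Brauer classes, is the most delicate point and is where the flexibility in choosing the quaternion class defining the conic bundle must be exploited.
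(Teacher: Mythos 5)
Your conic-bundle approach has a fundamental gap that is not just a technicality you deferred: it cannot work for general $L$, because conics over a number field satisfy the Hasse principle. Concretely, you require each fiber $X_Q$ over a point $Q\in C_L(L)$ to be a conic over $L$ with no $L$-point. By Hasse--Minkowski, such a conic has $X_Q(L_{v_0})=\emptyset$ for at least two places $v_0$ of $L$, and when $L$ is totally imaginary (e.g.\ $L=\QQ(i)$) at least two of these are nonarchimedean. Now suppose $(P_{v'})\in X_L(\AA_L)^{\Br}$. By functoriality of the Brauer--Manin pairing, $\pi\bigl((P_{v'})\bigr)\in C_L(\AA_L)^{\Br}$, and Stoll's conjecture applied to $C_L$ over $L$ (which is part of what one is assuming) forces $pr^{\infty_L}\bigl(C_L(\AA_L)^{\Br}\bigr)=C_L(L)$, i.e.\ $\pi(P_{v'})=Q$ for \emph{all} finite $v'$ for some fixed $Q\in C_L(L)$. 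Hence $P_{v'}\in X_Q(L_{v'})$ for every finite place $v'$, which contradicts $X_Q(L_{v_0})=\emptyset$ at a finite bad place $v_0$. So in fact $X_L(\AA_L)^{\Br}=\emptyset$ whenever $L$ has too few real places, which is the opposite of what is needed. No amount of Harari-type perturbation at auxiliary places can repair this, because the obstruction comes directly from Stoll applied to the base curve and the Hasse principle for conics, not from vertical Brauer classes.

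The paper avoids this precisely by not using conics as fibers. Instead the fiber over each $Q\in C(L)\setminus C(K)$ is taken to be the reducible curve $Z^f_L\subset\PP^1\times\PP^1$ cut out by a product of conics (Lemma \ref{lemma zero dimension scheme violate HP}); such curves \emph{can} be counterexamples to the Hasse principle (the classical $(x_0^2-p_1x_1^2)(x_0^2-p_2x_1^2)(x_0^2-p_1p_2x_1^2)$ phenomenon), and Lemma \ref{lemma zero dimension scheme trival Brauer group pairing} shows $\Br(L)\cong\Br(Z^f_L)$, so the Brauer--Manin set of the fiber equals its full (nonempty) adelic set. Combined with the fibers $Z^g$ over $C(K)$ having an empty local set at one fixed finite place $v_{p_4}$ (Lemma \ref{lemma zero dimension scheme without local point}), this gives both bullets by direct fibration arguments without any delicate vertical Brauer analysis. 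Your earlier part of the argument (emptiness of $X(K)$ and $X(\AA_K)^{\Br}$ via functoriality and Stoll) is sound, but the crux you flagged as ``the most delicate point'' is in fact impossible to close in the conic-bundle setting, and this is exactly why the previous $3$-fold constructions you cite from \cite{Wu21} carried conditions such as $[L:K]$ odd or $L$ having a real place.
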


	When $K=\QQ$ and $L=\QQ(i),$ using the construction method given in Theorem \ref{theorem main result: non-invariance of the Hasse principle with BMO for odd degree}, we give an explicit unconditional example in Subsection \ref{subsection main example2}. The smooth, projective, and geometrically connected surface  $X$ is defined by the following two equations: 
	\begin{equation*}
		\begin{cases}
			(w_0w_2+w_1^2+16w_2^2)(x_0^2-41x_1^2)(x_0^2-3x_1^2)(x_0^2-123x_1^2)(y_0^2-13y_1^2)(y_0^3-41y_1^3)\\
			+(w_0w_1+w_1w_2)(x_0^2-17x_1^2)(x_0^2-13x_1^2)(x_0^2-221x_1^2)(y_0^2-53y_1^2)(y_0^3-53y_1^3)=0\\ 
			w_1^2w_2=w_0^3-16w_2^3
		\end{cases}
	\end{equation*}
	in $\PP^2\times \PP^1\times \PP^1$ with tri-homogeneous coordinates $(w_0:w_1:w_2)\times(x_0:x_1)\times(y_0:y_1).$

	\subsubsection{Main ideas behind our constructions in the proof of theorems}
	Let $L/K$ be a nontrivial extension of number fields. 
	We find a smooth, projective, and geometrically connected curve $C$ such that $C(K)$ and $C(L)$ are both finite, nonempty, and that $C(K)\neq C(L).$ Then we construct  a pencil of curves parametrized by the curve $C\colon$ $\beta \colon X\to C$ such that the fiber of each $C(K)$ point is isomorphic to one given curve denoted by $C_\infty,$ and that the fiber of each $C(L)\backslash C(K)$ point is isomorphic to another given curve denoted by $C_0.$ By combining some fibration arguments with the functoriality of Brauer-Manin pairing, the arithmetic properties of $C_\infty$ and $C_0$ will determine the arithmetic properties of $X.$ We carefully choose the curves $C_\infty$ and $C_0$ to meet the needs of theorems.

	\section{Notation and preliminaries}
	Let $K$ be a number field, and let $\Ocal_K$ be the ring of its integers. Let $\Omega_K$ be the set of all nontrivial places of $K.$ Let $\infty_K\subset \Omega_K$ be the subset of all archimedean places, and let $\Omega_K^f=\Omega_K\backslash \infty_K.$ Let $\infty_K^r\subset \infty_K$ be the subset of all real places, and let $2_K\subset \Omega_K$ be the subset of all $2$-adic places. 
	For $v\in \Omega_K,$ let $K_v$ be the completion of $K$ at  $v.$ For $v\in \infty_K^r,$ let $\tau_v\colon K\hookrightarrow \RR$ be the embedding of $K$ into its completion.	
	Given a finite subset $S\subset \Omega_K,$ let $\AA_K$ (respectively $\AA_K^S$) be the ring of ad\`eles (ad\`eles without $S$ components) of $K.$ We say that an element is a \defi{prime element}, if the ideal generated by this element is a prime ideal. For a prime element $p\in \Ocal_K,$ we denote 
	its associated place by $v_p.$ We fix an algebraic closure $\overline{K}$ of $K,$ and let $\Gamma_K=\Gal(\overline{K}/K).$ We always assume that a field $L$ is a finite extension of $K.$ Let $S_L\subset \Omega_L$ be the subset of all places above $S.$

	In this paper, a \defi{$K$-scheme} will mean a reduced, separated scheme of finite type over $K,$ and all geometric objects are $K$-schemes. A \defi{$K$-curve}  will mean a proper $K$-scheme such that every irreducible components are of dimension one. In particular, a $K$-curve may have more than one irreducible component, and may have singular points. We say that a $K$-scheme is a \defi{$K$-variety} if it is geometrically integral. Be cautious that in our definition, a integral $K$-scheme may be not a variety, i.e. it may have multiple geometrically irreducible components. Given a proper $K$-scheme $X,$ if $X(\AA_K)\neq \emptyset,$ let $pr^S\colon X(\AA_K)\to X(\AA_K^S)$ be the projection induced by the natural projection $pr^S\colon\AA_K\to \AA_K^S.$ All cohomology groups in this paper are Galois or \'etale cohomology groups, and let $\Br(X)=H^2_{\et}(X,\GG_{m}).$

	By combining the \v{C}ebotarev density theorem with global class field theory, we have the following lemma to choose prime elements. This lemma is a generalization of Dirichlet's theorem on arithmetic progressions.

	\begin{lemma}\label{lemma dirichlet}
		Given  an extension of number fields $L/K,$ let $\Ifr\subset\Ocal_K$ be a proper nonzero ideal. Let $x\in \Ocal_K.$ Suppose that the image of $x$ in $\Ocal_K/\Ifr$ is invertible.  Then there exists a prime element $p\in \Ocal_K$ such that  
		\begin{enumerate}{
				\item\label{dirichlet condition 1}  $p\equiv x \mod \Ifr,$
				\item\label{dirichlet condition 2}  $\tau_v(p)>0$ for all $v\in \infty_K^r,$
				\item\label{dirichlet condition 3}  additionally, if $x=1,$ then $p$ splits completely in $L.$}
		\end{enumerate}
		And the set of places associated to such prime elements has positive density. 
	\end{lemma}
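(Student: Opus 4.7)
My plan is to use global class field theory to encode the congruence and sign conditions (\ref{dirichlet condition 1}) and (\ref{dirichlet condition 2}) as membership in a single ray class, so that \v{C}ebotarev's density theorem produces the prime with positive density; the splitting condition (\ref{dirichlet condition 3}) will then be imposed by passing to a compositum with $L$.

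The first move is a harmless reduction. Since $\Ifr\cap\ZZ$ contains a positive integer $m$, replacing $x$ by $x+nm$ for $n\in\ZZ_{>0}$ large preserves the image of $x$ in $\Ocal_K/\Ifr$ and makes $\tau_v(x)>0$ for every $v\in\infty_K^r$. This step is essential, because the ray class dictionary below accommodates sign conditions only relative to a totally positive representative; without it, multiplication by a totally positive correction factor cannot produce a totally positive $p$.

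The heart of the argument is to set $\mathfrak{m}=\Ifr\cdot\prod_{v\in\infty_K^r}v$ and work with the associated ray class field $K^{\mathfrak{m}}/K$. Global class field theory provides an isomorphism $\Gal(K^{\mathfrak{m}}/K)\cong \mathrm{Cl}^{\mathfrak{m}}(K)=I^{\mathfrak{m}}/P^{\mathfrak{m},+}_1$, where $P^{\mathfrak{m},+}_1$ is the subgroup of principal fractional ideals admitting a generator $\alpha$ with $\alpha\equiv 1\bmod\Ifr$ and $\alpha$ totally positive. The dictionary I will use is the following: a prime ideal $\mathfrak{p}$ coprime to $\mathfrak{m}$ lies in the same ray class as $(x)$ if and only if $\mathfrak{p}\cdot(x)^{-1}=(\alpha)$ for some such $\alpha$; in that case $p:=x\alpha$ lies in $\Ocal_K$, generates the prime ideal $\mathfrak{p}$, and satisfies both (\ref{dirichlet condition 1}) and (\ref{dirichlet condition 2}), using that $x$ is coprime to $\Ifr$ and totally positive.

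To incorporate (\ref{dirichlet condition 3}), note that when $x=1$ the required ray class is the trivial one, and the additional requirement that $\mathfrak{p}$ split completely in $L$ becomes exactly the condition that $\mathfrak{p}$ have trivial Frobenius in the compositum $M:=K^{\mathfrak{m}}\cdot L$. \v{C}ebotarev's density theorem applied to $K^{\mathfrak{m}}/K$ (respectively $M/K$ in the split case) then produces infinitely many primes with the desired Frobenius, of density $1/[K^{\mathfrak{m}}:K]$ (respectively $1/[M:K]$), which gives the positive density of places claimed. I expect the main obstacle to be the translation step of the previous paragraph: one must verify that the strict ray class equivalence captures exactly the pair of element-level conditions simultaneously, rather than merely up to units — this is precisely why the real places are inserted into the modulus and why $x$ is first made totally positive. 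Once this dictionary is fixed, the remainder is a routine application of \v{C}ebotarev's theorem.
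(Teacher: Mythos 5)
Your strategy is essentially the same as the paper's: encode conditions (\ref{dirichlet condition 1}) and (\ref{dirichlet condition 2}) via the ray class group modulo $\mathfrak{m}=\Ifr\cdot\prod_{v\in\infty_K^r}v$, then enlarge the field to enforce the splitting condition and apply \v{C}ebotarev. Your preliminary step of replacing $x$ by a totally positive representative of the same residue class is a nice explicit feature — the paper relies on the same observation implicitly when translating ``$\pfr$ lies in the ray class of $(x)$'' into the existence of a prime \emph{element} $p = x\alpha$ with the right sign behaviour — so spelling it out is a genuine improvement in clarity.

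There is, however, one gap in your handling of condition (\ref{dirichlet condition 3}). You set $M = K^{\mathfrak{m}}\cdot L$ and speak of ``trivial Frobenius in $M/K$'' with density $1/[M:K]$. This presumes $M/K$ is Galois, which fails whenever $L/K$ is not Galois; in that case the Frobenius is not a well-defined element of a Galois group, and the density of primes of $K$ splitting completely in $M$ is $1/[\widetilde{M}:K]$ for $\widetilde{M}$ the Galois closure of $M/K$, not $1/[M:K]$. The paper handles this by first replacing $L$ with its Galois closure $M_0$ over $K$ (using that a prime splits completely in $L$ if and only if it splits completely in $M_0$), and then applying \v{C}ebotarev to the Galois extension $M_0 K_{\mathfrak m}/K$, obtaining density $1/[M_0K_{\mathfrak m}:K]$. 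Your conclusion that the density is positive is still correct — splitting completely in any finite extension is governed by splitting completely in its Galois closure — but the stated density formula and the phrase ``trivial Frobenius'' need the Galois-closure step made explicit. Once that is inserted, your proof matches the paper's.
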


	\begin{proof}		
		Let $\mfr_{\infty}$ be the product of all places in $\infty_K^r,$ and let $\mfr=\Ifr \mfr_{\infty}$ be a modulus of $K.$ Let $K_\mfr$ be the ray class field of modulus $\mfr.$ 
		Let $I_\mfr$ be the group of fractional ideals that are prime to  $\Ifr.$ Let $P_\mfr\subset I_\mfr$ be the subgroup of principal ideals generated by some $a\in K^\times$ with $a\equiv 1 \mod \Ifr$ and $\tau_v(a)>0$ for all $v\in \infty_K^r.$ Then by Artin reciprocity law
		(cf. \cite[Theorem 7.1 and Corollary 7.2]{Ne99}), the classical Artin homomorphism $\theta$ gives an exact sequence:
		$$0\to P_\mfr\hookrightarrow I_\mfr \stackrel{\theta}\to \Gal(K_\mfr/K)\to 0.$$
		By the generalized Dirichlet density theorem (cf. \cite[Theorem 13.2]{Ne99}), the set of places associated to the prime elements satisfying conditions (\ref{dirichlet condition 1}) and (\ref{dirichlet condition 2}), has density $1 /[K_\mfr:K].$
		Let $M$ be a smallest Galois extension of $K$ containing $L,$ then a place of $K$ splits completely in $L$ if and only if it  splits completely in $M.$ Let $MK_\mfr$ be a composition field of $M$ and $K_\mfr.$ If $x=1,$ then by the \v{C}ebotarev density theorem
		(cf. \cite[Theorem 13.4]{Ne99}), the set of places associated to the prime elements satisfying all these conditions (\ref{dirichlet condition 1}), (\ref{dirichlet condition 2}) and (\ref{dirichlet condition 3}), has density $1/[MK_\mfr:K].$
	\end{proof}

	\subsection{Hilbert symbol}

	For $a,b\in K_v^\times$ and $v\in \Omega_K,$ we use Hilbert symbol $(a,b)_v\in \{\pm 1\}.$  By definition, $(a,b)_v=1$ if and only if
	the curve defined over $K_v$ by the equation $x_0^2-ax_1^2-bx_2^2=0$ in $\PP^2$ with homogeneous coordinates $(w_0:w_1:w_2),$ has a $K_v$-point. 

	\section{Stoll's conjecture for curves}

	For a smooth, projective, and geometrically connected curve $C$ defined over a number field $K,$ if the Tate-Shafarevich group and the rational points set of its Jacobian are both finite, then by combining the Cassels-Tate pairing with the Brauer evaluation pairing, Skorobogatov \cite[Chapter 6.2]{Sk01} and Scharaschkin \cite{Sc99} independently observed that $C(K)=pr^{\infty_K}(C(\AA_K)^\Br).$ In particular, if this curve $C$ is a counterexample to the Hasse principle, 
	then this failure can be explained by the Brauer-Manin obstruction. Stoll \cite[Theorem 8.6]{St07} generalized this observation. Furthermore, he \cite[Conjecture 9.1]{St07} made the following conjecture. 
	
	%

	\begin{conjecture}\cite[Conjecture 9.1]{St07}\label{conjecture Stoll}
		For any smooth, projective, and geometrically connected curve $C$ defined over a number field $K,$  the set $C(K)$ is dense in $pr^{\infty_K}(C(\AA_K)^\Br).$ In particular, if $C(K)$ is finite, then $C(K)=pr^{\infty_K}(C(\AA_K)^\Br).$
	\end{conjecture}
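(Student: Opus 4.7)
Since this is an open conjecture, I can only describe the classical Scharaschkin--Skorobogatov strategy (extended by Stoll) that reduces it to deep finiteness statements about the Jacobian $J$ of $C$. The global reciprocity law already gives the easy inclusion $C(K)\subset pr^{\infty_K}(C(\AA_K)^\Br)$; the whole content is the reverse density statement $pr^{\infty_K}(C(\AA_K)^\Br)\subset \overline{C(K)}$. I would first assume that $C$ has a $K$-rational point $P_0$, giving an Albanese embedding $\iota\colon C\hookrightarrow J$ over $K$, and deal with the general case afterwards.

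The plan is to transfer the problem from $C$ to $J$ via $\iota$. Assuming that $\Sha(J/K)$ is finite, the Cassels--Tate dual exact sequence identifies the Brauer--Manin set of $J$ with the closure of $J(K)$ inside $J(\AA_K)$, after replacing each archimedean factor $J(K_v)$ by its group of connected components. Functoriality of the Brauer--Manin pairing along $\iota$ then yields
\begin{equation*}
pr^{\infty_K}(C(\AA_K)^\Br)\subset pr^{\infty_K}\bigl(C(\AA_K)\cap \iota^{-1}(\overline{J(K)})\bigr).
\end{equation*}
When $C(K)$ is finite, a Mordell--Weil sieve argument should finish the job: using congruence conditions coming from reductions $C(\Ocal_K/\mfr)\hookrightarrow J(\Ocal_K/\mfr)$ modulo larger and larger ideals $\mfr$, one cuts down the right-hand side until only the finitely many known $K$-points survive. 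The density form of the conjecture requires a topological refinement of the same idea, using that $\overline{J(K)}$ is already a closed subgroup.

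The main obstacle is the finiteness of $\Sha(J/K)$, which is unknown in general and is essentially as hard as the conjecture itself; without it, the identification of $J(\AA_K)^\Br$ with the closure of $J(K)$ can fail. A secondary, more technical difficulty is that $\Br(C)$ may contain classes not pulled back from $\Br(J)$; these can only further shrink $C(\AA_K)^\Br$, so they are harmless for the direction we need, but their presence must be tracked carefully in the functoriality step. Finally, when $C$ has no $K$-point, the Albanese map becomes a torsor under $J$, and one must work with a suitable twist together with the descent-theoretic formulation of the obstruction; this complicates the bookkeeping but does not change the overall strategy. It is precisely the inability to supply the $\Sha$-finiteness input unconditionally that forces the statement to remain a conjecture, and this is why the present paper takes it as a hypothesis rather than attempting to prove it.
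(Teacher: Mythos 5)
You correctly recognize that this is an open conjecture quoted from Stoll, not a theorem the paper proves; the paper merely cites \cite[Conjecture 9.1]{St07} and adopts it as a working hypothesis. Your sketch of the Scharaschkin--Skorobogatov reduction via the Albanese embedding and the Cassels--Tate dual sequence (with finiteness of $\Sha(J/K)$ and $J(K)$ as the unproven inputs) matches exactly the surrounding discussion the paper gives before stating the conjecture and in the remark following it, so there is nothing to correct.
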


	\begin{remark}
		It is well known that for an elliptic curve over $\QQ$ of analytic rank $0,$ its Mordell-Weil group and Tate-Shafarevich group are both finite. By the dual sequence of Cassels-Tate, Conjecture \ref{conjecture Stoll} holds for this elliptic curve.
	\end{remark}

	The following definition and lemma have already been stated in the paper \cite{Wu21}. We give them below for the convenience of reading.
	\begin{definition}(\cite[Definition 4.0.3]{Wu21})\label{definition curve of type}
		Given a smooth, projective, and geometrically connected curve $C$ defined over a number field $K,$ let $L/K$ be a nontrivial extension of number fields. We say that a triple $(C,K,L)$  is of \defi{type $I$} if 
		\begin{itemize}
			\item the sets $C(K)$ and $C(L)$ are both finite and nonempty,
			\item $C(K)\neq C(L),$
			\item Stoll's conjecture \ref{conjecture Stoll} holds for the curve $C.$ 
		\end{itemize}
	\end{definition}

	\begin{lemma}(\cite[Lemma 4.0.4]{Wu21})\label{lemma stoll conjecture}
		Let $L/K$ be a nontrivial extension of number fields. Suppose that Conjecture \ref{conjecture Stoll} holds for all smooth, projective, and geometrically connected curves defined over $K.$ Then there exists a smooth, projective, and geometrically connected curve $C$ defined over $K$ such that the triple $(C,K,L)$ is of type $I.$
	\end{lemma}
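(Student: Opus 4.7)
The plan is to reduce the lemma to constructing a smooth, projective, and geometrically connected curve $C$ defined over $K$ of genus at least $2$ that carries a $K$-rational point and at least one $L$-rational point not defined over $K$. Once such a curve is in hand, Faltings' theorem forces both $C(K)$ and $C(L)$ to be finite, the two distinguished points witness both nonemptiness and the inequality $C(K)\neq C(L)$, and Stoll's conjecture for $C$ is granted by the standing hypothesis. All three requirements of Definition \ref{definition curve of type} will then be satisfied, so the triple $(C,K,L)$ will be of type $I$.

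For the construction, since $L/K$ is nontrivial I would pick $\alpha\in L\setminus K$ and let $P(x)\in K[x]$ be its minimal polynomial over $K$, of degree $n\geq 2$. Next I would choose an $x_0\in K$ with $P(x_0)\neq 0$, an $y_0\in K^\times$, and set $c:=y_0^2/P(x_0)\in K$. Then I would take $h(x)\in K[x]$ of some degree $m$ with $m+n\geq 5$, satisfying the interpolation $h(x_0)=c$, and chosen generically enough that $f(x):=P(x)h(x)$ is squarefree in $\bar K[x]$. Let $C$ be the smooth projective model of the hyperelliptic equation $y^2=f(x)$; by construction it is smooth, projective, and geometrically connected over $K$ of genus $g=\lfloor (n+m-1)/2\rfloor\geq 2$. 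The point $(x_0,y_0)$ lies in $C(K)$ by the interpolation condition, while $(\alpha,0)$ lies in $C(L)$ and is not in $C(K)$ since $\alpha\notin K$.

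The only step that requires a real argument is showing that $h$ can be chosen to satisfy simultaneously the interpolation $h(x_0)=c$ and the squarefreeness of $f=Ph$, and this is the main (though mild) technical obstacle. The interpolation is a single linear condition on the coefficients of $h$, so the admissible $h$'s form an affine subspace of $K[x]_{\leq m}$; inside it, the requirement that $\gcd(P,h)=1$ in $\bar K[x]$ together with $h$ itself being squarefree cut out a nonempty Zariski-open subset as soon as $m$ is large enough. Hence a suitable $h$ exists and the construction goes through. Everything else in the verification reduces to invoking Faltings' theorem for finiteness of $C(K)$ and $C(L)$, and the blanket hypothesis that Stoll's Conjecture \ref{conjecture Stoll} holds for all smooth, projective, and geometrically connected curves defined over $K$.
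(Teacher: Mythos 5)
Your proposal is essentially correct. Note that the paper does not include its own proof of this lemma---it recalls the statement from \cite[Lemma~4.0.4]{Wu21}---so there is no in-text argument here to compare against; a hyperelliptic construction of the sort you give is a natural route to the result. The step you flag as requiring care, the existence of a suitable $h$, does go through, and in fact does not need a genericity/dimension count: a concrete choice is $h(x)=c\prod_{i=1}^{m}\bigl(1-a_i(x-x_0)\bigr)$ with $a_1,\dots,a_m\in K^\times$ pairwise distinct and with each $x_0+1/a_i$ avoiding the finitely many roots of $P$. This simultaneously enforces $h(x_0)=c$, $\deg h=m$, squarefreeness of $h$, and $\gcd(P,h)=1$; since $P$ is irreducible and hence squarefree, $f=Ph$ is squarefree. (This already works for any $m\geq 1$; the constraint on $m$ is needed only to force $n+m\geq 5$ and thus $g\geq 2$.) One point you should make explicit is geometric connectedness: $f$, being squarefree of degree $\geq 5$, is not a square in $\overline{K}[x]$, so the affine curve $y^2=f(x)$ is geometrically irreducible and its smooth projective model $C$ is a geometrically connected $K$-curve. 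With that in place the remaining verifications are exactly as you state: $(x_0,y_0)\in C(K)$ and $(\alpha,0)\in C(L)\setminus C(K)$ give nonemptiness and $C(K)\neq C(L)$, Faltings' theorem gives finiteness of $C(K)$ and $C(L)$ since $g\geq 2$, and the standing hypothesis supplies Conjecture \ref{conjecture Stoll} for $C$, so the triple $(C,K,L)$ is of type $I$ in the sense of Definition \ref{definition curve of type}.
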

	

	The following lemma is a strong form of \cite[Lemma 6.1.3]{Wu21}. It will be used to choose a dominant morphism from a given curve to $\PP^1.$

	\begin{lemma}\label{lemma choose base change morphism}
		Let $L/K$ be a nontrivial extension of number fields. Given a smooth, projective, and geometrically connected curve $C$ defined over $K,$ suppose that the triple $(C,K,L)$ is of type $I$ (Definition \ref{definition curve of type}). For any finite $K$-subscheme $R\subset \PP^1,$  there exists a dominant $K$-morphism $\gamma\colon  C\to \PP^1$ such that 
		\begin{itemize}{
				\item  $\gamma(C(K))=\{\infty\}\subset \PP^1(K),$
				\item $\gamma(C(L)\backslash C(K))=\{0\}\subset \PP^1(K),$
				\item $\gamma$ is \'etale over $R.$    }
		\end{itemize}
	\end{lemma}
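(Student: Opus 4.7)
The plan is to build $\gamma$ from a carefully chosen rational function $f\in K(C)$ whose divisor prescribes the images of $C(K)$ and $C(L)\setminus C(K)$, and then to correct by an automorphism of $\PP^1$ fixing $0$ and $\infty$ to secure \'etaleness over $R$.

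Write $C(K)=\{P_1,\dots,P_n\}$ (nonempty by the type-$I$ hypothesis), and let $Q_1,\dots,Q_m$ be the closed points of $C$ of degree at least two whose residue field embeds in $L$, so that the $L$-points above $\{Q_1,\dots,Q_m\}$ are exactly $C(L)\setminus C(K)$ (nonempty since $C(K)\neq C(L)$). These closed points are pairwise distinct. Choose an effective reduced divisor $H_0$ on $C$ whose support is disjoint from $C(L)$ and of large enough degree that $\deg\bigl(\sum_i P_i+H_0-\sum_j Q_j\bigr)\geq 2g+1$, where $g$ is the genus of $C$. Set $D=\sum_i P_i+H_0-\sum_j Q_j$.

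I first choose a generic $f\in L(D)=H^0(C,\mathcal{O}_C(D))$. By Riemann--Roch $\dim_K L(D)=\deg D+1-g$ is large, and for each closed point $x$ in the support of $\sum_i P_i+H_0$ or of $\sum_j Q_j$, the proper subspace $L(D-x)\subset L(D)$ has codimension exactly one. Over the infinite field $K$ a generic $f$ avoids the finite union of these hyperplanes; then $f$ has a pole of order exactly one at every $P_i$ and at every point of $\mathrm{supp}(H_0)$, and a simple zero at every $Q_j$. In particular the induced morphism $\gamma\colon C\to\PP^1$ is dominant with $\gamma(P_i)=\infty$ and $\gamma(Q_j)=0$, giving (1) and (2).

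To arrange (3), I use that for $\deg D\geq 2g$ the linear system $|D|$ is basepoint-free, so in characteristic zero the classical Bertini theorem lets me further restrict $f$ to the dense open locus on which the residual zeros of $f$ are simple and the ramification of $\gamma$ away from $\{0,\infty\}$ is Morse (simple ramification at pairwise distinct branch points). For such $f$, every zero and every pole of $f$ is simple, so $\gamma$ is already \'etale over both $0$ and $\infty$; the branch locus outside $\{0,\infty\}$ is a finite subscheme $B\subset\PP^1\setminus\{0,\infty\}$. Post-composing $\gamma$ with the scaling automorphism $[s:t]\mapsto[cs:t]$ of $\PP^1$ fixes $0$ and $\infty$, so preserves (1), (2) and \'etaleness over $\{0,\infty\}$, while replacing $B$ by $c\cdot B$. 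The condition $c\cdot B\cap R=\emptyset$ reduces to avoiding $R\setminus\{0,\infty\}$ and excludes only finitely many $c\in K^\times$; a generic $c$ produces a morphism satisfying (3).

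The main obstacle is the simultaneous genericity in the third paragraph: I need a single $f\in L(D)$ meeting the prescribed local conditions at each of the $P_i$, $Q_j$, and points of $\mathrm{supp}(H_0)$ while defining a Morse $\gamma$. Each condition is classical (Riemann--Roch for the pole/zero orders, Bertini in characteristic zero for the Morse property), and for $\deg D$ large enough they are jointly a nonempty Zariski open condition on $L(D)$, hence contain $K$-rational points by the infinite-field argument.
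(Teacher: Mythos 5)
Your proposal is correct and follows the paper's own route: Riemann--Roch produces a rational function $\phi$ with simple poles at $C(K)$ (plus auxiliary points) and simple zeros at the closed points underlying $C(L)\setminus C(K)$, so the resulting $\gamma_0\colon C\to\PP^1$ is \'etale over $\{0,\infty\}$, and a generic scaling automorphism of $\PP^1$ fixing $0$ and $\infty$ then pushes the branch locus off $R$. Two minor slips in your write-up that do not affect the argument: $L(D-x)\subset L(D)$ has codimension $\deg x$, not one, when $x$ is a closed point of degree greater than one (such as the $Q_j$), so these are proper subspaces rather than hyperplanes; and the Morse condition is stronger than what is needed --- only simplicity of the zeros and poles of $f$ (hence \'etaleness of $\gamma$ over $\{0,\infty\}$) matters, since the branch locus is automatically finite.
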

	
	\begin{proof}
		The proof is along the same idea as the proof of \cite[Lemma 6.1.3]{Wu21}, where the statement was shown for $R\subset \PP^1\backslash\{0,\infty\}.$
		We will put one more condition for choosing a rational function. Let $K(C)$ be the function field of $C.$ 
		For $C(K)$ and $C(L)$ are both finite nonempty, and $C(K)\neq C(L),$ by Riemann-Roch theorem, we can choose a rational function $\phi\in K(C)^\times\backslash K^\times$ such that
		\begin{itemize}
			\item the set of its poles contains $C(K),$
			\item the set of its zeros contains $C(L)\backslash C(K),$
			\item all poles and zeros are of multiplicity one.
		\end{itemize}
		Then this rational function $\phi$ gives a dominant $K$-morphism $\gamma_0\colon C\to \PP^1$ such that 
		\begin{itemize}
			\item $\gamma_0(C(L)\backslash C(K))=\{0\}\subset \PP^1(K),$
			\item $\gamma_0(C(K))=\{\infty\}\subset \PP^1(K),$
			\item $\gamma_0$ is \'etale over $\{0,\infty\}.$
		\end{itemize}
		Then the branch locus of $\gamma_0$ is finite and contained in $\PP^1\backslash\{0,\infty\}.$
		We can choose an automorphism $\varphi_{\lambda_0}\colon \PP^1\to \PP^1, (u:v)\mapsto (\lambda_0 u:v)$ with $\lambda_0\in K^\times$ such that the branch locus of $\gamma_0$ has no intersection with $\varphi_{\lambda_0}(R).$ Let $\gamma= (\varphi_{\lambda_0})^{-1}\circ\gamma_0.$ Then the morphism $\gamma$ is \'etale over $R,$ and satisfies other conditions. 
	\end{proof}

	\section{Main results}

	In this section, we will construct smooth, projective, and geometrically connected surfaces to give negative answers to Questions \ref{Questions}.

	\subsection{Non-invariance of weak approximation with Brauer-Manin obstruction for surfaces}
	For any number field $K,$ assuming Conjecture \ref{conjecture Stoll}, Liang \cite[Theorem 4.5]{Li18} found a quadratic extension $L,$ and constructed  a $3$-fold to give a negative answer to Question \ref{question on WA1}. The author \cite[Theorem 6.2.1]{Wu21} generalized his result to any nontrivial extension of number fields. Although the strategies of these two papers are different, the methods used there are combining the arithmetic properties of Ch\^atelet surfaces with a construction method from Poonen \cite{Po10}. Thus 
	the varieties constructed there, are $3$-folds.  
	For any extension of number fields $L/K,$ assuming Conjecture \ref{conjecture Stoll}, in this subsection, we will construct a smooth, projective, and geometrically connected surface to give a negative answer to Question \ref{question on WA1}. The method that we will use, is to combine some fibration lemmas with the arithmetic properties of curves, whose irreducible components are projective lines. 
	
	\subsubsection{Preparation Lemmas} We state the following lemmas, which will be used for the proof of Theorem \ref{theorem main result: non-invariance of weak approximation with BMO}.

	The following fibration lemma has already been stated in the paper \cite{Wu21}. We give them below for the convenience of reading.
	\begin{lemma}(\cite[Lemma 6.1.1]{Wu21})\label{lemma fiber criterion for wabm}
		Let $K$ be a number field, and let $S\subset \Omega_K$ be a finite subset.  Let $f\colon X\to Y$ be a $K$-morphism of proper $K$-varieties $X$ and $Y$. Suppose that
		\begin{enumerate}{
				\item\label{fiber criterion for wabm condition 1}  the set $Y(K)$ is finite,
				\item\label{fiber criterion for wabm condition 2}  the variety $Y$ satisfies weak approximation with Brauer-Manin obstruction off $S,$
				\item\label{fiber criterion for wabm condition 3}  for any $P\in Y(K),$ the fiber $X_P$ of $f$ over $P$ satisfies weak approximation off $S.$}
		\end{enumerate}
		Then the variety $X$ satisfies weak approximation with Brauer-Manin obstruction off $S.$
	\end{lemma}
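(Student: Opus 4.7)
The plan is to start from an arbitrary adelic point $(P_v)_v\in X(\AA_K)^{\Br}$ and show that $pr^S((P_v)_v)$ can be approximated by elements of $X(K)$. The overall strategy is a two-step fibration: first I would reduce to a single fiber $X_Q$ of $f$ over a $K$-rational point $Q\in Y(K)$, and then invoke weak approximation inside that fiber.

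I would first push the adelic point forward by $f$. The functoriality of the Brauer--Manin pairing (via the pullback $f^*\colon\Br(Y)\to\Br(X)$, which intertwines the evaluation maps) shows that $(f(P_v))_v\in Y(\AA_K)^{\Br}$. By condition (\ref{fiber criterion for wabm condition 2}), its projection $pr^S((f(P_v))_v)$ lies in the closure of $pr^S(Y(K))$ inside $Y(\AA_K^S)=\prod_{v\notin S}Y(K_v)$. Combining condition (\ref{fiber criterion for wabm condition 1}) with properness of $Y$, the set $pr^S(Y(K))$ is finite, hence closed in this Hausdorff space, so it already coincides with its own closure. This forces $pr^S((f(P_v))_v)=pr^S(Q)$ for some $Q\in Y(K)$, which is to say $f(P_v)=Q$ in $Y(K_v)$ for every $v\notin S$.

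Consequently $(P_v)_{v\notin S}$ lies in $X_Q(\AA_K^S)$, where $X_Q=f^{-1}(Q)$ is the scheme-theoretic fiber. Condition (\ref{fiber criterion for wabm condition 3}) then gives that $X_Q(K)$ is dense in $X_Q(\AA_K^S)$, so I can approximate $(P_v)_{v\notin S}$ arbitrarily closely by points of $X_Q(K)\subseteq X(K)$, which finishes the proof. The argument is essentially mechanical and I do not anticipate a serious obstacle; the one point that deserves genuine attention is the functorial push-forward of Brauer-orthogonal adelic points, which follows from the adjunction $\langle\alpha,f(P_v)\rangle_Y=\langle f^*\alpha,P_v\rangle_X$ for all $\alpha\in\Br(Y)$, together with the exploitation of finiteness of $Y(K)$ to collapse a topological closure onto the set itself.
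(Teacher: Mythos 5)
Your proof is correct. The argument runs exactly as one would expect: functoriality of the Brauer--Manin pairing pushes the adelic point down to $Y(\AA_K)^{\Br}$, condition (2) together with finiteness of $Y(K)$ (hence closedness of $pr^S(Y(K))$ in the compact Hausdorff space $\prod_{v\notin S}Y(K_v)$, using properness of $Y$) pins the projection down to the diagonal image of a single $Q\in Y(K)$, and condition (3) applied to the fiber $X_Q$ finishes the approximation. Note that the present paper does not prove this lemma itself; it is quoted verbatim from \cite[Lemma 6.1.1]{Wu21}. Your argument is the expected one for this kind of fibration statement and I see no gap: in particular you correctly observe that $X_Q(\AA_K^S)\subset X(\AA_K^S)$ carries the subspace topology, so density of $X_Q(K)$ in $X_Q(\AA_K^S)$ suffices to approximate $(P_v)_{v\notin S}$ by $X(K)$-points.
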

	

	The following fibration lemma can be viewed as a modification of \cite[Lemma 6.1.2]{Wu21} to fit into our context.
	
	\begin{lemma}\label{lemma fiber criterion for not wabm}
		Let $K$ be a number field, and let $S\subset \Omega_K$ be a finite subset.  Let $f\colon X\to Y$ be a $K$-morphism of proper $K$-varieties $X$ and $Y$. We assume that
		\begin{enumerate}{
				\item\label{fiber criterion for not wabm condition 1}  the set $Y(K)$ is finite,
				\item\label{fiber criterion for not wabm condition 2} there exists some $P\in Y(K)$ such that the fiber $X_P$ of $f$ over $P$ does not satisfy weak approximation with Brauer-Manin obstruction off $S.$   }
		\end{enumerate}
		Then the variety $X$ does not satisfy weak approximation with Brauer-Manin obstruction off $S.$
	\end{lemma}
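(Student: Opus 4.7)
The strategy is standard: pull back a bad adelic point from $X_P$ to $X$ via the closed immersion $i_P\colon X_P\hookrightarrow X$, use functoriality of the Brauer--Manin pairing to check that it remains bad for $\Br(X)$, and then use the finiteness of $Y(K)$ to trap any putative rational approximation inside the single fiber $X_P$, contradicting the failure of approximation on $X_P$.

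I first select, using hypothesis (\ref{fiber criterion for not wabm condition 2}), a point $(Q_v)_{v\in\Omega_K}\in X_P(\AA_K)^{\Br(X_P)}$ whose off-$S$ projection is not in the closure of $X_P(K)$ inside $X_P(\AA_K^S)$. Viewing $(Q_v)$ as an adelic point of $X$ via $i_P$, functoriality of \'etale cohomology gives $i_P^{*}\colon \Br(X)\to\Br(X_P)$ with $\alpha(i_P(Q_v))=(i_P^{*}\alpha)(Q_v)$ at each place, so $\sum_v\inv_v\alpha(i_P(Q_v))=\sum_v\inv_v(i_P^{*}\alpha)(Q_v)=0$ for every $\alpha\in\Br(X)$. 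Hence $(Q_v)\in X(\AA_K)^{\Br}$, and it remains only to show that $(Q_v)_{v\notin S}$ is not in the closure of $X(K)$ in $X(\AA_K^S)$.

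This is where hypothesis (\ref{fiber criterion for not wabm condition 1}) enters. Pick any place $v_0\in\Omega_K\setminus S$; since $Y$ is a proper $K$-variety, $Y(K)$ embeds as a finite subset of the Hausdorff space $Y(K_{v_0})$, so there is an open neighborhood $W\subset Y(K_{v_0})$ of $P$ with $W\cap Y(K)=\{P\}$. Suppose, for contradiction, that $(Q_v)_{v\notin S}$ lay in the closure of $X(K)$. Given any finite $T\subset\Omega_K\setminus S$ containing $v_0$ and any open neighborhoods $U_v\subset X_P(K_v)$ of $Q_v$ for $v\in T$, I extend each $U_v$ to an open $\widetilde U_v\subset X(K_v)$ with $\widetilde U_v\cap X_P(K_v)=U_v$ and further shrink $\widetilde U_{v_0}$ inside $f^{-1}(W)$, which is allowed because $f(Q_{v_0})=P\in W$. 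The approximation assumption then produces $x\in X(K)$ with $x\in\widetilde U_v$ for all $v\in T$; at $v_0$ this forces $f(x)\in W\cap Y(K)=\{P\}$, so $x\in X_P(K)$, and then automatically $x\in X_P(K_v)\cap\widetilde U_v=U_v$ for every $v\in T$. Thus $(Q_v)_{v\notin S}$ would lie in the closure of $X_P(K)$ in $X_P(\AA_K^S)$, a contradiction. The only subtle step is the refinement of $\widetilde U_{v_0}$ through $f^{-1}(W)$; once the separation-at-$v_0$ trick is in place, functoriality of the Brauer--Manin pairing does all the remaining work.
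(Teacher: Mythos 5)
Your proof is correct and follows essentially the same strategy as the paper's: pick a bad adelic point in $X_P(\AA_K)^{\Br(X_P)}$, push it into $X(\AA_K)^{\Br}$ by functoriality of the Brauer--Manin pairing, and use the finiteness of $Y(K)$ to show that any approximating $K$-point of $X$ would have to land in $X_P$. The only (minor) difference is in how the fiber over $P$ is isolated: the paper chooses a \emph{Zariski} open $V_{P_0}\subset Y$ with $V_{P_0}(K)=\{P_0\}$ and replaces each open $U_v$ ($v\in S'$) by an open of $f^{-1}(V_{P_0})(K_v)$, so that membership of a $K$-point in the shrunken neighborhood forces $f(x)\in V_{P_0}(K)=\{P_0\}$; you instead fix a single auxiliary place $v_0\notin S$ and use Hausdorffness of $Y(K_{v_0})$ to find a $v_0$-adic open $W\ni P$ with $W\cap Y(K)=\{P\}$. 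Both routes work; the paper's Zariski-open trick is coordinate-free and needs no auxiliary place, while yours localizes the separation at one place and argues by contradiction against a general basic open neighborhood rather than exhibiting a single fixed $N$. Either way the hypotheses are used in the same way, and the proof is sound.
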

	
	\begin{proof}
		By Assumption (\ref{fiber criterion for not wabm condition 2}), take a $P_0\in Y(K)$ such that the fiber $X_{P_0}$ does not satisfy weak approximation with Brauer-Manin obstruction off $S.$ Then there exist a finite nonempty subset $S'\subset\Omega_K\backslash S$ and a nonempty open subset 	
		$L=\prod_{v\in S'}U_v\times \prod_{v\notin S'}X_{P_0}(K_v)\subset X_{P_0}(\AA_K)$ such that $L\cap X_{P_0}(\AA_K)^{\Br}\neq \emptyset,$ but that $L\cap X_{P_0}(K)=\emptyset.$
		By Assumption (\ref{fiber criterion for not wabm condition 1}), the set $Y(K)$ is finite, so we can take a Zariski open subset $V_{P_0}\subset Y$ such that $V_{P_0}(K)=\{P_0\}.$ For any $v\in S',$ since $U_v$ is open in $X_{P_0}(K_v)\subset f^{-1}(V_{P_0})(K_v),$ we can take an open subset $W_v$ of $f^{-1}(V_{P_0})(K_v)$ such that $W_v\cap X_{P_0}(K_v)=U_v.$ 	
		Consider the open subset $N=\prod_{v\in S'}W_v\times \prod_{v\notin S'}X(K_v)\subset X(\AA_K),$ then $L\subset N.$ By the functoriality of Brauer-Manin pairing, we have $X_{P_0}(\AA_K)^{\Br}\subset  X(\AA_K)^{\Br}.$ So the set $N\cap X(\AA_K)^{\Br}\supset L\cap X_{P_0}(\AA_K)^{\Br},$ is nonempty. But $N\cap X(K)=N\cap X_{P_0}(K)=L\cap X_{P_0}(K)
		=\emptyset,$ which implies that $X$ does not satisfy weak approximation with Brauer-Manin obstruction off $S.$
	\end{proof}

	The following lemma states that a $K$-scheme with multiple geometrically irreducible components will violate weak approximation.
	\begin{lemma}\label{lemma prevariety not satifying WA}
		Let $K$ be a number field, and let $S\subset \Omega_K$ be a finite subset. Let $X$ be a $K$-scheme, which is not a $K$-variety, i.e. it has multiple geometrically irreducible components.  We assume $\prod_{v\in \Omega_K}X(K_v)\neq \emptyset,$ then the variety
		$X$ does not satisfy weak approximation off $S.$
	\end{lemma}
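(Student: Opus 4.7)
The plan is to exploit the action of $G := \Gal(\bar K/K)$ on the geometric irreducible components of $X$. Let $L/K$ be a finite Galois extension over which all $m \geq 2$ geometric irreducible components $Y_1,\dots,Y_m$ of $X$ are defined, and put $G = \Gal(L/K)$, so that the $G$-orbits in $\{Y_1,\dots,Y_m\}$ correspond to the $K$-irreducible components of $X$. The key observation is that any $P \in X(K)$, viewed as a geometric point, lies on some $Y_i$; since $P$ is $G$-fixed, it must also lie on every Galois conjugate $\sigma(Y_i)$, whence $P \in \bigcap_{\sigma \in G}\sigma(Y_i)$, a Galois-stable proper closed $K$-subscheme whenever $Y_i$ has nontrivial $G$-orbit. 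In the degenerate case where every $Y_i$ is $G$-fixed, $X$ is a union of $m \geq 2$ geometrically integral $K$-subvarieties $X_1,\dots,X_m$ with no $X_i$ contained in any other $X_j$.

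To exhibit an adelic point outside the closure of $X(K)$, I would choose, via Chebotarev's theorem, a place $v_0 \in \Omega_K \setminus S$ splitting completely in $L$, so each $Y_i$ is defined over $K_{v_0}$. Using the hypothesis $\prod_v X(K_v) \neq \emptyset$ together with the positive-density supply of split places, one arranges a local point $P_{v_0}$ lying on a component $Y_i$ with nontrivial $G$-orbit (respectively on a prescribed $X_i$) and, by a small $v_0$-adic perturbation, lying in the open dense subscheme $Y_i \setminus \bigcup_{j \neq i} Y_j$ (respectively $X_i \setminus \bigcup_{j \neq i} X_j$). Let $U_{v_0}$ be a $v_0$-adic open neighborhood of $P_{v_0}$ inside this complement, and set $U = U_{v_0} \times \prod_{v \neq v_0,\, v \notin S} X(K_v) \subset X(\AA_K^S)$; in the ``all $G$-fixed'' case, prescribe additionally, at a second split place $v_1 \in \Omega_K \setminus (S \cup \{v_0\})$, a local open condition forcing avoidance of $X_i$, namely a neighborhood of some point in $X_j(K_{v_1}) \setminus X_i(K_{v_1})$ for some $j \neq i$.

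If some $Q \in X(K)$ lay in $U$, then $Q$ viewed geometrically would lie on $Y_i$ and avoid every $Y_j$ with $j \neq i$; but the key observation forces $Q \in \bigcap_{\sigma \in G}\sigma(Y_i)$, which is contained in $Y_j$ for some $j \neq i$ in the nontrivial $G$-orbit, a contradiction. In the ``all $G$-fixed'' case, $Q$ would instead have to lie on $X_i$, which is incompatible with the $v_1$-adic condition. Hence $U \cap X(K) = \emptyset$, so $X(K)$ is not dense in $X(\AA_K^S)$. The main obstacle is engineering $P_{v_0}$ to land in the generic open subscheme of its chosen component: this requires producing a $K_{v_0}$-point on a nonempty open subscheme of a geometrically integral $K_{v_0}$-variety, which is handled by Hensel-lifting from any smooth local point; the existence of such a smooth point at enough $v_0$ follows from the standing local-point hypothesis on $X$ together with the flexibility of choosing $v_0$ in the positive-density set of places that split completely in $L$.
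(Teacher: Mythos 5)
Your approach captures the right idea — force a hypothetical $K$-point to lie on a single prescribed geometric component at one place while Galois (or a second place) forbids exactly that — but it is more elaborate than needed, and one step of the justification is incorrect.

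First, the case split is unnecessary. The paper's argument handles both cases uniformly: pass to the smooth locus $X^0\subset X$ (open dense, same number of geometric components, and now the components are pairwise \emph{disjoint}), pick two distinct geometrically irreducible components $X_1^0$, $X_2^0$ defined over finite extensions $K_1$, $K_2$ of $K$, and choose two places $v_1\neq v_2$ in $\Omega_K\setminus S$ splitting in $K_1K_2$ with $X_1^0(K_{v_1})\neq\emptyset$ and $X_2^0(K_{v_2})\neq\emptyset$. Imposing the condition ``lies in $X_1^0(K_{v_1})$ at $v_1$'' and ``lies in $X_2^0(K_{v_2})$ at $v_2$'' is already contradictory for a $K$-point, since such a point sits on a single geometric component of $X^0$, determined independently of the completion. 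This gives the conclusion without invoking the Galois action on components or distinguishing the ``all components $G$-fixed'' case. Your case (1) observation — that a component with nontrivial $G$-orbit already forces an obstruction at a single place — is a valid refinement, but you then need a separate argument for case (2), and that separate argument is essentially the paper's two-place argument anyway.

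Second, there is a genuine gap in your final paragraph. You assert that the existence of a smooth $K_{v_0}$-point on the prescribed component $Y_i$ ``follows from the standing local-point hypothesis on $X$ together with the flexibility of choosing $v_0$ in the positive-density set of places that split completely in $L$.'' That is not correct: the hypothesis $\prod_v X(K_v)\neq\emptyset$ only provides some local point on $X$ at each place, with no control over which geometric component it lies on. To guarantee $K_{v_0}$-points on a \emph{specific} geometrically integral component $Y_i$ at a positive-density set of $v_0$, you need the Lang--Weil estimates (to produce smooth $\mathbb{F}_{v_0}$-points on the reduction, followed by Hensel lifting) — precisely what the paper cites. So the ``main obstacle'' you identify is real, but the resolution you offer does not actually resolve it; Lang--Weil is the missing input. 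The hypothesis $\prod_v X(K_v)\neq\emptyset$ is used only to build the nonempty adelic open set by filling in the remaining factors $X(K_v)$, not to populate any particular component.
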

	
	\begin{proof}
		Let $X^0$ be the smooth locus of $X.$ Claim that $X^0\subset X$ is an open dense subscheme.
		We prove the claim first. For $X$ is reduced and $K$ is of characteristic $0,$ the scheme $X$ is geometrically reduced. For any geometrically irreducible component of $X,$  by \cite[Chapter II. Corollary 8.16]{Ha97}, its smooth locus is open dense in this geometrically irreducible component. So the claim follows. From this claim, we have $X$ and $X^0$ have the same number of geometrically irreducible components.	
		
		By assumption that $X$ has multiple geometrically irreducible components, let $X_1^0$ and $X_2^0$ be two different geometrically irreducible components of $X^0,$ defined over the number fields $K_1$ and $K_2$ respectively. By Lang-Weil estimate \cite{LW54}, the varieties $X_1^0$ and $X_2^0$ have local points for almost all places of $K_1$ and $K_2$ respectively. By the \v{C}ebotarev density theorem, we can take two different places $v_1,v_2\in \Omega_K^f\backslash S$ such that $v_1,v_2$ split in $K_1$ and also in $K_2,$ and that $X_1^0(K_{v_1})\neq\emptyset$ and $X_2^0(K_{v_2})\neq\emptyset.$  For  $\prod_{v\in \Omega_K}X(K_v)\neq \emptyset,$ we consider a nonempty open subset $L=X_1^0(K_{v_1})\times X_2^0(K_{v_2})\times\prod_{v\in \Omega_K\backslash \{v_1,v_2\}}X(K_v)\subset \prod_{v\in \Omega_K}X(K_v).$ For $X^0$ is smooth, and the varieties $X_1^0,~X_2^0$ are different geometrically irreducible components, we have $X_1^0(K_{v_1})\cap X_2^0(K_{v_1})=\emptyset,$ which implies $X(K)\cap L=\emptyset.$ Hence $X$ does not satisfy weak approximation off $S.$
	\end{proof}
	
	The following two lemmas state that two projective lines meeting at one point will violate weak approximation with Brauer-Manin obstruction.
	
	\begin{lemma}\label{lemma two projective lines Brauer group}
		Let $C$ be a curve defined over a number field $K$ by a homogeneous equation:
		$x_0^2-x_1^2=0$ in $\PP^2$ with homogeneous coordinates $(x_0:x_1:x_2).$ Then the natural restriction map $\Br(K)\to \Br(C),$ is an isomorphism. 
	\end{lemma}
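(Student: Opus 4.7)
The plan is to compute $\Br(C) = H^2_{\et}(C,\GG_m)$ via the normalization
\[
\nu\colon \tilde C = L_1\sqcup L_2\longrightarrow C,
\]
where $L_1 = \{x_0=x_1\}$ and $L_2=\{x_0=-x_1\}$ are the two irreducible components, each isomorphic to $\PP^1_K,$ meeting transversally at the $K$-rational node $p=(0:0:1).$ Locally at $p,$ a section of $\mathcal{O}_C$ is a pair of functions on the two branches agreeing at $p,$ so the cokernel of $\GG_{m,C}\hookrightarrow \nu_*\GG_{m,\tilde C}$ is the skyscraper $i_*\GG_{m,\Spec K},$ where $i\colon\Spec K\hookrightarrow C$ is the inclusion of $p.$ This yields a short exact sequence of étale sheaves on $C,$
\[
0\to \GG_{m,C}\to \nu_*\GG_{m,\tilde C}\to i_*\GG_{m,\Spec K}\to 0.
\]

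Next I would pass to the long exact sequence in étale cohomology. Using Hilbert~90 in the form $H^1_{\et}(\Spec K,\GG_m)=0$ and the standard identification $\Br(\PP^1_K)\cong \Br(K)$ (projective bundle formula), the relevant segment reduces to
\[
0\to \Br(C)\to \Br(K)\oplus \Br(K)\xrightarrow{\varphi} \Br(K),
\]
with $\varphi$ given by pullback along the two $K$-sections $\{p\}\hookrightarrow L_j.$ Since the identification $\Br(\PP^1_K)=\Br(K)$ arises from pullback at any $K$-point, $\varphi$ becomes the difference map $(\alpha,\beta)\mapsto \alpha-\beta,$ whose kernel is the diagonal copy of $\Br(K).$ Finally, the structural pullback $\Br(K)\to \Br(C)$ lands in the diagonal (both summands come from $\Spec K$ by base change), so it realises the claimed isomorphism.

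The main technical obstacle is establishing the displayed short exact sequence rigorously as a sequence of étale (not merely Zariski) sheaves, rather than just checking stalks; this is standard for a nodal curve where both the node and its two analytic branches are $K$-rational, as is the case here, but any writeup should verify it carefully. Once that is in place, the remaining ingredients — the long exact sequence, Hilbert~90, and $\Br(\PP^1_K)=\Br(K)$ — are formal, and the proof reduces to identifying the map $\varphi$ as the difference map.
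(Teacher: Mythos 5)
Your proposal is correct and takes essentially the same approach as the paper: the short exact sequence $0\to\GG_{m,C}\to\nu_*\GG_{m,\tilde C}\to i_*\GG_{m,\Spec K}\to 0$ is exactly the paper's sequence $0\to\GG_{m,C}\to i_{1*}\GG_{m,C_1}\oplus i_{2*}\GG_{m,C_2}\to i_{3*}\GG_{m,C_1\cap C_2}\to 0$ (since $\tilde C=C_1\sqcup C_2$ and $C_1\cap C_2=\{p\}$), the pushforwards are exact because $\nu$ and $i$ are finite, and both arguments reduce to $0\to\Br(C)\to\Br(K)\oplus\Br(K)\to\Br(K)$. The only cosmetic difference is that the paper invokes the splitting of the sheaf sequence (as $C_1\cap C_2$ is a rational point) to get short exact cohomology sequences in every degree, whereas you use Hilbert~90 to kill the offending $H^1$ term and then explicitly identify the final map as the difference map; and the paper's method of "checking exactness at geometric points" via Milne II.2.15(b) is precisely the stalk verification you flagged as the technical point to nail down.
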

	
	\begin{proof}
		Let $C_1$ and $C_2$ be two irreducible components of $C.$ Let $i_1,$ $i_2$ and $i_3$ be the natural embeddings of $C_1,$  $C_2$ and $C_1\cap C_2$ in $C$ respectively. Then we have the following sequence of \'etale sheaves on $C\colon$
		$$0\to \Ocal_C\to i_{1*}\Ocal_{C_1}\oplus i_{2*}\Ocal_{C_2}\to i_{3*}\Ocal_{C_1\cap C_2}\to 0,$$
		where the map $i_{2*}\Ocal_{C_2}\to i_{3*}\Ocal_{C_1\cap C_2}$ is the opposite of the restriction map, and other maps are canonical restriction maps. By checking the exactness of this sequence at each geometric point of $C,$ and \cite[Chapter II. Theorem 2.15]{Mi80}, it is exact.
		It gives rise to an exact sequence of \'etale sheaves on $C\colon$
		$$	0\to\GG_{m,C}\to i_{1*}\GG_{m,C_1}\oplus i_{2*}\GG_{m,C_2}\to i_{3*}\GG_{m,C_1\cap C_2}\to 0.$$
		For the intersection $C_1\cap C_2$ is a rational point, this sequence splits. Using \'etale cohomology, for any integer $n\geq 0,$ we have an exact sequence:
		$$0\to H^n_{\et}(C,\GG_{m})\to H^n_{\et}(C,i_{1*}\GG_{m,C_1}\oplus i_{2*}\GG_{m,C_2})\to H^n_{\et}(C,i_{3*}\GG_{m,C_1\cap C_2})\to 0.$$
		For $i_1,$ $i_2$ and $i_3$ are closed embeddings, by \cite[Chapter II. Corollary 3.6]{Mi80}, the functors $i_{1*},$ $i_{2*}$ and $i_{3*}$ are exact. Since $C_1$ and $C_2$ are isomorphic to $\PP^1,$ we have the following commutative diagram:
		$$ \xymatrix{
			0\ar[r]& H^n_{\et}(C,\GG_{m})\ar[r]\ar@{=}[d]& H^n_{\et}(C,i_{1*}\GG_{m,C_1}\oplus i_{2*}\GG_{m,C_2})\ar[r]\ar[d]^{\cong}& H^n_{\et}(C,i_{3*}\GG_{m,C_1\cap C_2})\ar[r]\ar[d]^{\cong}& 0\\
			0\ar[r]& H^n_{\et}(C,\GG_{m})\ar[r]& H^n_{\et}(\PP^1,\GG_{m})\oplus H^n_{\et}(\PP^1,\GG_{m})\ar[r]& H^n(\Gamma_K,\overline{K}^\times)\ar[r]& 0\\
		}$$
		with exact rows. By taking $n=2,$ we have an exact sequence: $$0\to \Br(C)\to \Br(K) \oplus \Br(K) \to \Br(K)\to 0.$$ So we have $\Br(K)\cong \Br(C).$	
	\end{proof}
	
	\begin{remark}
		In \cite{HS14}, Harpaz and Skorobogatov used another exact sequence of \'etale sheaves on $C$ (cf. Proposition 1.1 in loc. cit.) to calculate the Brauer group of $C.$ By easy computation, this lemma can be gotten from their Corollary 1.5 in loc. cit.
	\end{remark}

	\begin{lemma}\label{lemma two projective lines not satifying WA}
		Let $K$ be a number field, and let $S\subset \Omega_K$ be a finite subset. Let $C$ be a curve defined over $K$ by a homogeneous equation:
		$x_0^2-x_1^2=0$ in $\PP^2$ with homogeneous coordinates $(x_0:x_1:x_2).$ Then 
		the curve $C$ does not satisfy weak approximation with Brauer-Manin obstruction off $S.$
	\end{lemma}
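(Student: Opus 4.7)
The plan is to combine the Brauer group computation of Lemma \ref{lemma two projective lines Brauer group} with the failure of weak approximation coming from Lemma \ref{lemma prevariety not satifying WA}. The strategy is to show that on this reducible curve $C$ the Brauer--Manin set is all of $C(\AA_K)$, so the obstruction is vacuous, while the presence of two geometrically irreducible components is enough to break weak approximation even off $S$.

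First, I would record that $C$ has obvious $K$-points (for instance $(1:1:0)$), so $C(\AA_K) \neq \emptyset$ and in fact $\prod_{v\in\Omega_K} C(K_v)\neq\emptyset$. Next, by Lemma \ref{lemma two projective lines Brauer group}, the structure morphism induces an isomorphism $\Br(K) \xrightarrow{\sim} \Br(C)$. By the global reciprocity law, every class in the image of $\Br(K)\to\Br(C)$ pairs trivially with every adelic point of $C$, so the surjectivity of $\Br(K)\to\Br(C)$ forces $C(\AA_K)^{\Br} = C(\AA_K)$. Since $C$ is proper, the natural projection $pr^S\colon C(\AA_K)\to C(\AA_K^S)$ is surjective, hence
\[
pr^S\bigl(C(\AA_K)^{\Br}\bigr) \;=\; pr^S\bigl(C(\AA_K)\bigr) \;=\; C(\AA_K^S).
\]

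It remains to show that $C(K)$ is not dense in $C(\AA_K^S)$. The curve $C$ is the union of the two lines $x_0=x_1$ and $x_0=-x_1$ in $\PP^2$, both defined and geometrically irreducible over $K$, meeting only at $(0:0:1)$; in particular $C$ is a reduced $K$-scheme with two distinct geometrically irreducible components, so it is not a $K$-variety in the sense used in this paper. Applying Lemma \ref{lemma prevariety not satifying WA} (whose hypotheses $\prod_v C(K_v)\neq\emptyset$ and multiplicity of geometric components are now both verified) gives that $C$ does not satisfy weak approximation off $S$, i.e.\ $C(K)$ is not dense in $C(\AA_K^S)$. Combining this with the previous paragraph, $C(K)$ is not dense in $pr^S(C(\AA_K)^{\Br})$, which is exactly the claim.

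There is no real obstacle here: the substantive input is Lemma \ref{lemma two projective lines Brauer group}, which has already been established, and the only care needed is the observation that $pr^S$ is surjective on adelic points of the proper scheme $C$, so that the failure of weak approximation provided by Lemma \ref{lemma prevariety not satifying WA} transfers directly to a failure of weak approximation with Brauer--Manin obstruction off $S$.
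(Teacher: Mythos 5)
Your proof is correct and follows the same route as the paper: combine Lemma \ref{lemma two projective lines Brauer group} (which makes the Brauer--Manin obstruction vacuous, since $\Br(K)\cong\Br(C)$) with Lemma \ref{lemma prevariety not satifying WA} (which shows $C$ fails weak approximation off $S$ because it has a $K$-point and two geometrically irreducible components). You simply spell out the intermediate steps---the reciprocity law argument and the surjectivity of $pr^S$---that the paper leaves implicit.
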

	
	\begin{proof}
		For the curve $C$ has $K$-rational points and two irreducible components, by Lemma \ref{lemma prevariety not satifying WA}, it does not satisfy weak approximation off $S.$ By Lemma \ref{lemma two projective lines Brauer group}, we have $\Br(K)\cong \Br(C).$ So the curve $C$ does not satisfy weak approximation with Brauer-Manin obstruction off $S.$
	\end{proof}

	\begin{theorem}\label{theorem main result: non-invariance of weak approximation with BMO}
		For any nontrivial extension of number fields $L/K,$ assuming that Conjecture \ref{conjecture Stoll} holds over $K,$ there exists a smooth, projective, and geometrically connected surface  $X$ defined over $K$ such that
		\begin{itemize}
			\item the surface $X$ has a $K$-rational point, and satisfies weak approximation with Brauer-Manin obstruction 
			off $\infty_K,$
			\item the surface $X_L$ does not satisfy weak approximation with Brauer-Manin obstruction 
			off $T$ for any finite subset $T\subset \Omega_L.$ 
		\end{itemize}
	\end{theorem}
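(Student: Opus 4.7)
Following the general strategy in the Main ideas, I take $Q_1 = x_0^2 + x_1^2 - x_2^2$ and $Q_2 = x_0^2 - x_1^2$ in $\PP^2$, so that the conic $C_\infty := \{Q_1=0\}$ is smooth with a rational point $(1:0:1)$, while $C_0 := \{Q_2=0\}$ is the pair of lines of Lemma \ref{lemma two projective lines not satifying WA}. By Lemma \ref{lemma stoll conjecture} (applied under Conjecture \ref{conjecture Stoll}), I fix a smooth, projective, geometrically connected curve $C/K$ such that $(C,K,L)$ is of type $I$. A direct Jacobian computation on the pencil $Z := \{sQ_1 + tQ_2 = 0\} \subset \PP^1\times\PP^2$ shows that $Z$ is smooth and that the projection $f\colon Z \to \PP^1$ has singular fibres exactly over $R := \{(0:1),(1:1),(1:-1)\}$; in particular $f$ is smooth over $\infty = (1:0)$, where the fibre is the smooth conic $C_\infty$. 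I then invoke Lemma \ref{lemma choose base change morphism} with this finite subscheme $R \subset \PP^1$ to obtain a dominant $K$-morphism $\gamma\colon C \to \PP^1$ which is étale over $R$ and satisfies $\gamma(C(K)) = \{\infty\}$ and $\gamma(C(L)\setminus C(K)) = \{0\}$.

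Let $X := C \times_{\PP^1} Z$, viewed as a closed subscheme of $C\times\PP^2$ with the natural projection $\beta\colon X \to C$. I verify that $X$ is a smooth projective geometrically integral surface. At any point $(c,z) \in X$ with $\gamma(c) \notin R$, the morphism $f$ is smooth at $z$, so $X \to C$ is smooth by base change; at any $(c,z)$ with $\gamma(c) \in R$, the morphism $\gamma$ is étale at $c$, so $X \to Z$ is étale and $Z$ is smooth. Projectivity is immediate. For geometric integrality, the generic fibre of $\beta$ is the conic $\phi Q_1 + Q_2 = 0$ over $K(C)$ (with $\phi$ the rational function defining $\gamma$), whose discriminant $-\phi(\phi^2-1)$ is a nonzero element of $K(C)$; this conic is therefore smooth and geometrically irreducible, so $X$ is geometrically integral. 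Finally $X(K)\neq\emptyset$: for any $P \in C(K)$, the fibre $X_P$ is isomorphic to $C_\infty$ and contains $(P,(1:0:1))$.

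For the first bullet, I apply Lemma \ref{lemma fiber criterion for wabm} to $\beta$ with $S = \infty_K$: the set $C(K)$ is finite by type $I$; Stoll's conjecture for $C$ provides weak approximation with Brauer-Manin obstruction off $\infty_K$; and each fibre over $C(K)$ is the smooth conic $C_\infty$ with a rational point, hence isomorphic to $\PP^1_K$, which satisfies weak approximation off $\infty_K$. For the second bullet, let $T \subset \Omega_L$ be any finite subset, and apply Lemma \ref{lemma fiber criterion for not wabm} to $\beta_L\colon X_L \to C_L$ over $L$: the set $C_L(L) = C(L)$ is finite, and by type $I$ there exists $P \in C(L) \setminus C(K)$, whose fibre is $X_{L,P} \cong (C_0)_L$; by Lemma \ref{lemma two projective lines not satifying WA} this fibre fails weak approximation with Brauer-Manin obstruction off $T$, hence so does $X_L$.

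The main technical point is the geometric verification of the second paragraph: one must check that the total space $X$ is smooth even at the nodes of the degenerate fibres lying over $\gamma^{-1}(0)$, and that $X$ remains geometrically integral despite the reducibility of these fibres. The first is controlled by the smoothness of $Z$ (Jacobian computation) combined with the étaleness of $\gamma$ over $R$; the second is controlled by the non-degeneracy of the generic conic fibre. The rest of the argument is a direct combination of Lemmas \ref{lemma fiber criterion for wabm} and \ref{lemma fiber criterion for not wabm} with Stoll's conjecture and Lemma \ref{lemma two projective lines not satifying WA}.
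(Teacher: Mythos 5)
Your proof reproduces the paper's argument essentially step for step: the same pencil $u_0Q_1+u_1Q_2$ in $\PP^1\times\PP^2$ with $C_\infty$ a smooth conic and $C_0$ a pair of lines, the same pullback along a morphism $\gamma\colon C\to\PP^1$ chosen via Lemma~\ref{lemma choose base change morphism} to be \'etale over the discriminant locus $R$, the same smoothness check by splitting into the two cases $\gamma(c)\notin R$ and $\gamma(c)\in R$, and the same two fibration lemmas \ref{lemma fiber criterion for wabm} and \ref{lemma fiber criterion for not wabm} applied to $\beta\colon X\to C$ together with Lemma~\ref{lemma two projective lines not satifying WA}. The one local variation is your proof of geometric integrality via the smooth, geometrically irreducible generic conic fiber of $\beta$ (which implicitly uses that every irreducible component of the smooth surface $X_{\bar K}$ is $2$-dimensional and hence dominates $C_{\bar K}$), whereas the paper pulls back the very ample sheaf $\Ocal(1,2)$ along the finite morphism $(\gamma,\id)$ and invokes \cite[Chapter III.\ Corollary 7.9]{Ha97}; both are valid.
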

	
	\begin{proof}
		We will construct a smooth, projective, and geometrically connected surface $X.$ Let $C_\infty$ be a projective line defined over $K$ by a homogeneous equation:
		$x_0^2+x_1^2-x_2^2=0$ in $\PP^2$ with homogeneous coordinates $(x_0:x_1:x_2).$ Let $C_0$ be a curve defined over $K$ by a homogeneous equation:
		$x_0^2-x_1^2=0$ in $\PP^2$ with homogeneous coordinates $(x_0:x_1:x_2).$
		Let $(u_0:u_1)\times(x_0:x_1:x_2)$ be the coordinates of $\PP^1\times\PP^2,$ and let $s'=u_0(x_0^2+x_1^2-x_2^2)+u_1(x_0^2-x_1^2)\in \Gamma(\PP^1\times\PP^2,\Ocal(1,2)).$ Let $X'$ be the locus defined by $s'=0$ in $\PP^1\times\PP^2.$ For the curves $C_\infty$ and $C_0$ meet transversally, the locus $X'$ is smooth. Let $R$ be the locus over which the composition $ X'\hookrightarrow \PP^1\times\PP^2  \stackrel{pr_1}\to\PP^1$ is not smooth.  Then by \cite[Chapter III. Corollary 10.7]{Ha97}, it is finite over $K.$ 
		By the assumption that Conjecture \ref{conjecture Stoll} holds over $K,$ and Lemma \ref{lemma stoll conjecture}, we can take a smooth, projective, and geometrically connected curve $C$ defined over $K$ such that the triple $(C,K,L)$ is of type $I.$ By Lemma \ref{lemma choose base change morphism}, we can choose a $K$-morphism $\gamma\colon  C\to \PP^1$ such that $\gamma(C(L)\backslash C(K))=\{0\}\subset \PP^1(K),$ $\gamma(C(K))=\{\infty\}\subset \PP^1(K),$ and that $\gamma$ is \'etale over $R.$ 
		Let $B=C\times \PP^2,$ and let $(\gamma,id)\colon B\to\PP^1\times \PP^2.$  Let $\Lcal=(\gamma,id)^*\Ocal(1,2),$ and let $s=(\gamma,id)^* (s')\in \Gamma(B,\Lcal).$ Let $X$ be the zero locus of $s$ in $B.$
		For $\gamma$ is \'etale over the locus  $R,$ the surface $X$ is smooth.
		Since $X$ is defined by the support of the global section $s,$ it is an effective divisor. The invertible sheaf $\Lscr (X')$ on $\PP^1\times\PP^2$ is isomorphic to $\Ocal(1,2),$ which is a very ample sheaf on $\PP^1\times\PP^2.$ And $(\gamma,id)$ is a finite morphism, so the pull back of this ample sheaf is again ample, which implies that the invertible sheaf $\Lscr (X)$ on $C\times\PP^2$ is ample. By \cite[Chapter III. Corollary 7.9]{Ha97}, the surface $X$ is geometrically connected. So the surface $X$ is smooth, projective, and geometrically connected. 
		Let $\beta\colon X \hookrightarrow B=C\times \PP^2 \stackrel{pr_1}\to C$ be the composition morphism. 
		By our construction, we have the following Cartesian diagram:
			$$\xymatrix{
				X \ar@{^(->}[d]\ar[r] \ar@/_2.5pc/[dd]_{\beta} & X'  \ar@{^(->}[d]  \\
				C\times \PP^2 \ar^{pr_1}[d] \ar^{(\gamma,id)}[r]&  \PP^1\times \PP^2 \ar^{pr_1}[d]\\
				C \ar^{\gamma}[r] & \PP^1  
			}$$

		Next, we will check that the surface $X$ has the properties.
		
		We will show that  $X$ has a $K$-rational point. For any $P\in C(K),$ the fiber  $\beta^{-1}(P)\cong C_\infty.$ The projective line $C_\infty$ has a $K$-rational point, so the set $X(K)\neq \emptyset.$\\
		We will show that  $X$ satisfies weak approximation with Brauer-Manin obstruction 
		off $\infty_K.$ 
		Since the projective line $C_\infty$ satisfies weak approximation, also weak approximation off $\infty_K,$  we consider the morphism $\beta,$ then
		Assumption (\ref{fiber criterion for wabm condition 3}) of Lemma \ref{lemma fiber criterion for wabm} holds.
		Since Conjecture \ref{conjecture Stoll} holds for the curve $C,$ using Lemma \ref{lemma fiber criterion for wabm} for the morphism $\beta,$ the surface $X$ satisfies weak approximation with Brauer-Manin obstruction off $\infty_K.$ 
		
		For any finite subset $T\subset \Omega_L,$ we will show that  $X_L$ does not satisfy weak approximation with Brauer-Manin obstruction 
		off $T.$ We take a point  $Q\in C(L)\backslash C(K),$ by the choice of the curve $C$ and morphism $\beta,$ the fiber $\beta^{-1}(Q)\cong C_{0L}.$
		By Lemma \ref{lemma two projective lines not satifying WA}, the curve $C_{0L}$ does not satisfy weak approximation with Brauer-Manin obstruction off $T\cup \infty_L.$ By Lemma \ref{lemma fiber criterion for not wabm}, the surface $X_L$ does not satisfy weak approximation with Brauer-Manin obstruction 
		off $T\cup \infty_L.$ So it does not satisfy weak approximation with Brauer-Manin obstruction 
		off $T.$
	\end{proof}

	\subsection{Non-invariance of the failures of the Hasse principle explained by the Brauer-Manin obstruction for surfaces}

	For an extension of number fields $L/K,$ assuming that  the degree $[L:K]$ is odd, or that the field $L$ has one real place, also assuming Conjecture \ref{conjecture Stoll}, the author  \cite[Theorem 6.3.1 and Theorem 6.3.2]{Wu21} constructed $3$-folds to give negative answers to Question \ref{question on HP}. The method used there is combining the arithmetic properties of Ch\^atelet surfaces with a construction method from Poonen \cite{Po10}. Thus 
	the varieties constructed there, are $3$-folds. For any extension of number fields $L/K,$ assuming Conjecture \ref{conjecture Stoll}, in this subsection, we will construct a smooth, projective, and geometrically connected surface to give a negative answer to Question \ref{question on HP}.

	\subsubsection{Preparation lemmas} We state the following lemmas, which will be used for Choosing curves.

	\begin{lemma}\label{lemma for two prime elements one is square for another}
		Let $K$ be a number field.  Let $p_1,p_2$ be two odd prime elements, and $v_{p_1}\neq v_{p_2}.$
		If $(p_1,p_2)_{v_{p_1}}=1,$ then $p_2\in K_{v_{p_1}}^{\times 2}.$ Otherwise, if $(p_1,p_2)_{v_{p_1}}=-1,$ then $p_2\notin K_{v_{p_1}}^{\times 2}.$
	\end{lemma}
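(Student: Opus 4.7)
The plan is to recognize this as the standard computation of the tame Hilbert symbol and then invoke Hensel's lemma. First I would unpack the hypotheses locally at $v = v_{p_1}$: since $v_{p_1}\neq v_{p_2}$, the element $p_2$ lies outside the prime ideal associated to $v_{p_1}$, so $p_2$ is a unit in the ring of integers $\mathcal{O}_{K_v}$. On the other hand, $p_1$ generates the maximal ideal at $v_{p_1}$, so it is a uniformizer of $K_v$. Moreover, since $p_1$ is ``odd'', the residue characteristic at $v$ is odd, i.e.\ $K_v$ is a non-dyadic local field.

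Next I would apply the standard tame formula for the Hilbert symbol at a non-dyadic place: for a uniformizer $\pi$ and a unit $u$ in $K_v$ with residue field $\kappa(v)$ of odd characteristic,
\[
(\pi, u)_v \;=\; 1 \quad\Longleftrightarrow\quad \bar{u} \in \kappa(v)^{\times 2},
\]
where $\bar u$ denotes the reduction of $u$ modulo $\pi$. Applying this with $\pi = p_1$ and $u = p_2$ reduces the problem to deciding when $\bar{p_2} \in \kappa(v)^{\times 2}$.

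Finally I would use Hensel's lemma on the polynomial $X^2 - p_2$: since the residue characteristic is odd and $p_2$ is a unit, this polynomial has a root in $K_v$ if and only if its reduction modulo $p_1$ has a simple root in $\kappa(v)$, which is equivalent to $\bar{p_2}$ being a nonzero square. Combining with the previous step, $(p_1,p_2)_v = 1$ iff $p_2 \in K_v^{\times 2}$, which is the content of both clauses of the lemma.

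The main (minor) obstacle is simply the verification of the tame Hilbert symbol formula; however, this is a standard result (see, e.g., Neukirch, \emph{Algebraic Number Theory}, Chapter V, or Serre, \emph{A Course in Arithmetic}, Chapter III), so the proof can invoke it directly. No serious obstacle arises, and the entire argument is a short local computation.
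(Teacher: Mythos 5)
Your proof is correct and conceptually the same as the paper's: both reduce the question to whether $p_2$ is a square in the residue field at $v_{p_1}$ and then apply Hensel's lemma. The only difference is packaging — you cite the tame Hilbert symbol formula as a black box, whereas the paper proves the needed implication directly from the definition by taking a primitive solution $(a,b,c)$ of $x_0^2 - p_1 x_1^2 - p_2 x_2^2 = 0$, noting $v_{p_1}(a)=v_{p_1}(c)=0$ by comparing valuations, and reading off the congruence $a^2 \equiv p_2 c^2 \pmod{p_1}$.
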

	
	\begin{proof}
		Consider the case $(p_1,p_2)_{v_{p_1}}=1.$ By definition, the equation $x_0^2-p_1x_1^2-p_2x_2^2=0$  has a nontrivial solution in $K_{v_{p_1}}.$ Let $(x_0,x_1,x_2)=(a,b,c)$ be a primitive solution of this equation. By comparing the valuations, we have $ v_{p_1}(a)=v_{p_1}(c)=0.$ So $a^2-p_2c^2\equiv 0\mod p_1.$ For $p_1$ is an odd prime element, by Hensel's lemma, we have $p_2\in K_{v_{p_1}}^{\times 2}.$ This proves the first part of this lemma. If $p_2\in K_{v_{p_1}}^{\times 2},$ then $(p_1,p_2)_{v_{p_1}}=1,$ which implies the last argument.
	\end{proof}

	\begin{lemma}\label{lemma for square prime element}
		Let $K$ be a number field, and let $v\in \Omega_K^f.$ 
		Then there exists a proper nonzero ideal $\Ifr\subset\Ocal_K$ such that for any  $a\in \Ocal_K,$ if $a\equiv 1\mod \Ifr,$ then $a\in K_v^{\times 2}.$
	\end{lemma}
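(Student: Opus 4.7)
The plan is to apply Hensel's lemma to the polynomial $f(x)=x^2-a$ at the approximate root $x_0=1.$ The underlying geometric fact is that $(K_v^\times)^2$ is open in $K_v^\times$ (a standard feature of local fields), so a sufficiently small congruence condition around $1$ forces an element to be a square.

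Let $\mathfrak{p}\subset \mathcal{O}_K$ be the prime ideal corresponding to $v,$ and let $e=v(2),$ so $e=0$ whenever $v\notin 2_K.$ I would take $\mathfrak{I}:=\mathfrak{p}^N$ for any integer $N>2e$ (for instance, $N=1$ if $v\notin 2_K$ and $N=2e+1$ otherwise).

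Now suppose $a\in \mathcal{O}_K$ satisfies $a\equiv 1\mod \mathfrak{p}^N.$ Viewing $a$ inside the completion $\mathcal{O}_{K_v},$ the element $x_0=1$ satisfies
\[
v(f(x_0))=v(1-a)\geq N>2e=2v(f'(x_0)).
\]
The classical form of Hensel's lemma (for instance \cite[II.\S4]{Ne99}) therefore produces a root of $f$ in $\mathcal{O}_{K_v},$ i.e.\ $a\in K_v^{\times 2}.$

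There is essentially no serious obstacle here; the only mildly delicate point is handling the case $v\in 2_K,$ which is why one must allow $N$ to grow with the ramification index $e$ rather than taking $N=1$ uniformly. Note also that $\mathfrak{I}=\mathfrak{p}^N$ is a proper nonzero ideal since $N\geq 1,$ so all conditions of the statement are met.
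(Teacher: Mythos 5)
Your proof is correct and uses essentially the same approach as the paper: apply Hensel's lemma to $x^2-a$ near $x_0=1$. The paper's version simply takes $\Ifr=(p^3)$ with $p$ the residue characteristic of $v$ (which gives $v(1-a)\geq 3e>2v(2)$ in all cases), whereas you take $\Ifr=\pfr^N$ with $N>2v(2)$, a slightly sharper but equivalent choice.
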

	
	\begin{proof}
		Let $p$ be the prime number such that $v|p$ in $K.$ Let $\Ifr$ be the ideal generated by $p^3.$
		Then by Hensel's lemma, we have  $1+ p^3 \Ocal_{K_v}\subset K_v^{\times 2},$ which implies this lemma.
	\end{proof}

	\begin{lemma}\label{lemma for hilbert prime element -1}
		Let $K$ be a number field. Let $p_1,p_2$ be two odd prime elements, and $v_{p_1}\neq v_{p_2}.$
		Let $\Ifr\subset \Ocal_K$ be the ideal generated by $p_1p_2.$
		Then there exists an element $x\in \Ocal_K$ such that
		\begin{itemize}
			\item the image of $x$ in $\Ocal_K/\Ifr$ is invertible,
			\item for any  $a\in \Ocal_K,$ if $a\equiv x\mod \Ifr,$ then $(p_1,a)_{v_{p_1}}=-1$ and $(p_2,a)_{v_{p_2}}=1.$
		\end{itemize}
	\end{lemma}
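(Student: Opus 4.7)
The plan is to use the Chinese Remainder Theorem to choose $x$ with prescribed residues modulo $p_1$ and $p_2$ separately, and then translate these residue conditions into the required Hilbert symbol statements via a minor strengthening of Lemma~\ref{lemma for two prime elements one is square for another}.

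Since $(p_1)$ and $(p_2)$ are distinct maximal ideals of $\Ocal_K$, CRT gives a ring isomorphism $\Ocal_K/\Ifr \cong \Ocal_K/(p_1) \times \Ocal_K/(p_2)$. Because $p_1$ and $p_2$ are odd, each residue field has odd cardinality, so its multiplicative group has even order and therefore contains a non-square. I would choose $\bar{x}_1 \in (\Ocal_K/(p_1))^\times$ to be any non-square and $\bar{x}_2 := 1 \in (\Ocal_K/(p_2))^\times$, and then lift the pair $(\bar{x}_1,\bar{x}_2)$ through the CRT isomorphism to some $x \in \Ocal_K$. Such an $x$ is a unit modulo each of $p_1$ and $p_2$, so its image in $\Ocal_K/\Ifr$ is invertible; this settles the first bullet point.

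For any $a \equiv x \bmod \Ifr$, reducing modulo $p_1$ and $p_2$ separately shows that $a$ is a $v_{p_1}$-unit whose residue is the non-square $\bar{x}_1$, and a $v_{p_2}$-unit whose residue is the square $1$. To convert this into the Hilbert symbol conclusions, I would first extend Lemma~\ref{lemma for two prime elements one is square for another} to arbitrary local units: if $p \in \Ocal_K$ is an odd prime element and $u$ is a $v_p$-unit, then $(p,u)_{v_p} = 1$ iff $u \in K_{v_p}^{\times 2}$. This is proved by the verbatim argument of Lemma~\ref{lemma for two prime elements one is square for another}, since any primitive solution of $x_0^2 - p x_1^2 - u x_2^2 = 0$ over $K_{v_p}$ must satisfy $v_p(x_0) = v_p(x_2) = 0$ (using that $u$ is a unit), its reduction mod $p$ exhibits $u$ as a square in the residue field, and Hensel's lemma then lifts this square to $K_{v_p}^{\times 2}$. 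Applying this extension at $v_{p_1}$ and $v_{p_2}$ respectively yields $(p_1,a)_{v_{p_1}} = -1$ and $(p_2,a)_{v_{p_2}} = 1$. The only genuine, and very mild, step is recording this extension of the Hilbert symbol computation from the prime-element case to arbitrary units; as it is essentially a rerun of the previous argument, I expect no serious obstacle.
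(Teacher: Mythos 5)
Your proposal is correct and matches the paper's proof: both pick a non-square residue modulo $p_1$ and the residue $1$ modulo $p_2$, lift via CRT, and then appeal to the argument of Lemma~\ref{lemma for two prime elements one is square for another} (which the paper invokes with the phrase ``by the similar argument'') to translate the residue conditions into the Hilbert symbol conclusions. You have simply made explicit the mild generalization of that lemma to arbitrary $v_p$-units, which the paper leaves implicit.
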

	
	\begin{proof}
		We take an element $\overline{x_1}\in (\Ocal_K/p_1)\backslash (\Ocal_K/p_1)^2,$ and let $x_1\in \Ocal_K$ be a lift of $\overline{x_1}.$ By Chinese remainder theorem, we choose an element $x\in \Ocal_K$ such that $x\equiv x_1\mod p_1$ and $x\equiv 1 \mod p_2.$ By the similar argument as in the proof of Lemma \ref{lemma for two prime elements one is square for another}, this element $x$ satisfies the conditions.
	\end{proof}

	\subsubsection{Choosing one curve with respect to an extension}\label{subsection choose a polynomial with respect an extension}
	In this subsubsection, we will choose one curve with some given arithmetic properties.
	Given an extension of number fields $L/K,$ by Lemmas \ref{lemma for square prime element} and \ref{lemma dirichlet}, we can choose an odd prime element $p_1\in \Ocal_K$ satisfying the following conditions:
	\begin{itemize}
		\item $\tau_v(p_1)>0$ for all $v\in \infty_K^r,$
		\item $p_1\in K_v^{\times 2}$  for all $v\in 2_K,$ 
		\item $p_1$ splits in $L.$
	\end{itemize}
	By Lemmas \ref{lemma for two prime elements one is square for another}, \ref{lemma for square prime element} and \ref{lemma dirichlet}, we can choose an odd prime element $p_2\in \Ocal_K$ satisfying the following conditions:
	\begin{itemize}
		\item $(p_1,p_2)_{v_{p_1}}=1,$
		\item  $p_2$ splits in $L,$
		\item  $v_{p_2}\neq v_{p_1}.$
	\end{itemize}

	Let $L'=L(\sqrt{p_1},\sqrt{p_2}).$ By Lemma \ref{lemma dirichlet}, we can choose an odd prime element $p_3\in \Ocal_K$ such that $v_{p_3}\notin\{v_{p_1},v_{p_2}\},$ and that $v_{p_3}$ splits in $L'.$  
	Let $f(x_0,x_1;y_0,y_1)=(x_0^2-p_1x_1^2)(x_0^2-p_2x_1^2)(x_0^2-p_1p_2x_1^2)(y_0^2-p_3y_1^2)(y_0^3-p_3y_1^3)$ be a bi-homogeneous polynomial, and let $Z^f$ be the zero locus of $f$ in $\PP^1\times\PP^1$ with bi-homogeneous coordinates $(x_0:x_1)\times(y_0:y_1).$ With the notation, we have the following lemmas.


	\begin{lemma}\label{lemma zero dimension scheme violate HP}
		Let $Z^f\subset \PP^1\times\PP^1$ be the zero locus defined over $K$  by the bi-homogeneous polynomial $f(x_0,x_1;y_0,y_1).$ Then the curves $Z^f$ and $Z^f_L$ violate the Hasse principle.
	\end{lemma}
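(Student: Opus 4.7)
The plan is to show $Z^f(\AA_K) \neq \emptyset$ while $Z^f(K) = \emptyset$, and similarly for $Z^f_L$. I would begin by observing that $Z^f$ decomposes as the union of five reducible components: three horizontal ones of the form $\Spec K(\sqrt{p})\times \PP^1$ for $p\in\{p_1,p_2,p_1p_2\}$, cut out by the factors $x_0^2-px_1^2$, together with two vertical ones cut out by $y_0^2-p_3y_1^2$ and $y_0^3-p_3y_1^3$. Hence $Z^f(K)\neq\emptyset$ iff one of $p_1,p_2,p_1p_2$ is a square in $K$ or $p_3$ is a square or cube in $K$. Since $p_1,p_2,p_3$ each generate a prime ideal of $\Ocal_K$, their principal ideals are squarefree and not cubes of ideals, ruling out all five possibilities, so $Z^f(K)=\emptyset$. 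The same reasoning over $L$ uses that $p_1,p_2,p_3$ each split completely in $L$ (for $p_3$ via $L'=L(\sqrt{p_1},\sqrt{p_2})\supseteq L$), so their ideals in $\Ocal_L$ are products of distinct primes, hence squarefree and not cubes.

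For local solvability over $K$, I would exhibit at each $v\in\Omega_K$ a component carrying a $K_v$-point. At archimedean and $2$-adic places, the conditions on $p_1$ give $p_1\in K_v^{\times 2}$, so $\Spec K(\sqrt{p_1})\times\PP^1$ has a $K_v$-point. At $v_{p_1}$, Lemma \ref{lemma for two prime elements one is square for another} applied to $(p_1,p_2)_{v_{p_1}}=1$ yields $p_2\in K_{v_{p_1}}^{\times 2}$. At every remaining finite $v$ of odd residue characteristic with $v\nmid p_1p_2$, both $p_1,p_2$ are units, and in the order-$2$ group $k(v)^\times/k(v)^{\times 2}$ a pigeonhole argument forces at least one of $\bar p_1,\bar p_2,\bar p_1\bar p_2$ to be trivial; Hensel's lemma lifts it to $K_v^{\times 2}$. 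Note this case already covers $v_{p_3}$, since $v_{p_3}\nmid p_1p_2$.

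The only remaining place is $v=v_{p_2}$, where nothing in the construction of $p_2$ directly supplies a square, and this is the main obstacle. The idea is to extract the needed square from global Hilbert reciprocity $\prod_v(p_1,p_2)_v=1$: every other local Hilbert symbol is trivial by the preceding analysis (real positivity of $p_1$, $2$-adic squareness of $p_1$, triviality of the tame symbol at unramified odd units, and the defining condition at $v_{p_1}$), so the product formula forces $(p_1,p_2)_{v_{p_2}}=1$. By symmetry of the Hilbert symbol and Lemma \ref{lemma for two prime elements one is square for another}, this gives $p_1\in K_{v_{p_2}}^{\times 2}$, completing $K$-local solvability. Extending to $L$-local solvability at each $w\in\Omega_L$ above $v\in\Omega_K$ is automatic, since $K_v^{\times 2}\subseteq L_w^{\times 2}$ lifts any $K_v$-point of a component to an $L_w$-point, so $Z^f_L(\AA_L)\neq\emptyset$.
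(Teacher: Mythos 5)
Your proof is correct and follows essentially the same path as the paper's: verify that no factor of $f$ has a $K$-point (resp.\ $L$-point), then establish local solvability of the $x$-factor $(x_0^2-p_1x_1^2)(x_0^2-p_2x_1^2)(x_0^2-p_1p_2x_1^2)$ place by place, using the defining properties of $p_1,p_2$ at $\infty_K$, $2_K$, and $v_{p_1}$, the product formula for the Hilbert symbol at $v_{p_2}$, and the fact that the three classes $\bar p_1,\bar p_2,\bar p_1\bar p_2$ cannot all be nontrivial in the order-two group $k(v)^{\times}/k(v)^{\times 2}$ at the remaining odd places. You spell out two points the paper leaves terse — why $Z^f(K)=Z^f(L)=\emptyset$ (squarefree/non-cube ideal argument, using that each $p_i$ splits in $L$) and why $Z^f_L(\AA_L)\neq\emptyset$ follows from $Z^f(\AA_K)\neq\emptyset$ via $K_v^{\times 2}\subseteq L_w^{\times 2}$ — both of which are compressed in the original into the remarks that the $p_i$ split in $L$ and the phrase ``by the quadratic reciprocity law,'' but the mathematical content is the same.
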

	
	\begin{proof}
		By the condition that the prime elements $p_1,p_2$ and $p_3$ split in $L,$ the set $Z^f(K)=Z^f(L)= \emptyset.$
		It will be suffice to prove that for any $v\in \Omega_K,$ the equation $(x_0^2-p_1x_1^2)(x_0^2-p_2x_1^2)(x_0^2-p_1p_2x_1^2)=0$ has a $K_v$-solution in $\PP^1$ with homogeneous coordinates $(x_0:x_1).$ 
		
		Suppose that $v\in \infty_K\cup 2_K.$  Then, by the choice of $p_1,$ we have $p_1\in K_v^{\times 2},$ so the equation $x_0^2-p_1x_1^2=0$ has a $K_v$-solution in $\PP^1.$\\
		Suppose that $v=v_{p_1}.$ Then, by the choice of $p_2,$ we have $(p_1,p_2)_{v_{p_1}}=1.$ By Lemma \ref{lemma for two prime elements one is square for another}, we have $p_2\in K_{v_{p_1}}^{\times 2}.$	
		Hence the equation $x_0^2-p_2x_1^2=0$ has a $K_v$-solution in $\PP^1.$\\
		Suppose that $v=v_{p_2}.$ Using the product formula $\prod_{v\in \Omega_K}(p_1,p_2)_v=1,$ we have $(p_1,p_2)_{v_{p_2}}=1.$ By Lemma \ref{lemma for two prime elements one is square for another}, we have $p_1\in K_{v_{p_2}}^{\times 2}.$	
		Hence the equation $x_0^2-p_1x_1^2=0$ has a $K_v$-solution in $\PP^1.$\\
		Suppose that  $v\in \Omega_K\backslash (\infty_K\cup 2_K\cup\{v_{p_1}, v_{p_2}\}),$ then,
		by the quadratic reciprocity law, at least one of equations: $x_0^2-p_1x_1^2=0,~x_0^2-p_2x_1^2=0,~x_0^2-p_1p_2x_1^2=0,$ has a $K_v$-solution in $\PP^1.$\\
		So $Z^f(\AA_K)\neq \emptyset.$	
	\end{proof}

	\begin{lemma}\label{lemma zero dimension scheme trival Brauer group pairing}
		The natural restriction map  $\Br (L)\to \Br (Z^f_L),$ is an isomorphism.
	\end{lemma}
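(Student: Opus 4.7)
The plan is to generalize the sheaf-theoretic computation of Lemma~\ref{lemma two projective lines Brauer group} to the reducible curve $Z^f_L$, which I view as a ``grid'' of projective lines in $\PP^1\times\PP^1$.

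First I would describe $Z^f_L$ geometrically. The defining polynomial factors as $f = g(x_0,x_1)\,h(y_0,y_1)$, where $g$ is the product of three pairwise coprime quadratic factors and $h$ is the product of a quadratic and a cubic. Each of these five factors is irreducible over $L$ because the splitting-completely conditions on $p_1, p_2, p_3$ force $p_1, p_2, p_1 p_2, p_3 \notin L^{\times 2}$ and $p_3 \notin L^{\times 3}$, by the standard valuation argument that an odd prime element cannot be a square or cube in a field in which its associated prime splits completely. Writing $A = V(g) = \bigsqcup_{i=1}^3 \Spec F_i$ with $F_1 = L(\sqrt{p_1})$, $F_2 = L(\sqrt{p_2})$, $F_3 = L(\sqrt{p_1 p_2})$, and $B = V(h) = \bigsqcup_{k=4}^5 \Spec F_k$ with $F_4 = L(\sqrt{p_3})$, $F_5 = L(\sqrt[3]{p_3})$, one has $Z^f_L = (A\times\PP^1) \cup (\PP^1\times B)$, and the intersection $A\times B = \bigsqcup_{(i,k)} \Spec F_{ik}$ consists of six spectra of fields $F_{ik} = F_i \otimes_L F_k$ (again by the splitting argument applied over $F_i$).

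Next I would apply the short exact sequence of \'etale sheaves on $Z^f_L$
$$0 \to \GG_{m,Z^f_L} \to \GG_{m,A\times\PP^1}\oplus\GG_{m,\PP^1\times B} \to \GG_{m,A\times B} \to 0,$$
analogous to the one in Lemma~\ref{lemma two projective lines Brauer group}, verify its exactness at each geometric point, and take the associated long exact sequence in \'etale cohomology. Plugging in $\Br(\PP^1_{F_k}) = \Br(F_k)$, $H^1(\Spec F_{ik},\GG_m) = 0$, and $\Br(\Spec F_{ik}) = \Br(F_{ik})$, the Brauer portion reads
$$0 \to \Br(Z^f_L) \to \bigoplus_{k=1}^{5}\Br(F_k) \xrightarrow{\partial} \bigoplus_{(i,k)}\Br(F_{ik}),$$
where $\partial$ is the difference of the two restriction maps. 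The natural morphism $\Br(L)\to\Br(Z^f_L)$ corresponds to the diagonal restriction, which visibly lands in $\ker\partial$. Injectivity is then immediate from corestriction: $\textup{cor}_{F_1/L}\circ\textup{res}_{F_1/L}$ equals multiplication by $[F_1:L]=2$ and $\textup{cor}_{F_5/L}\circ\textup{res}_{F_5/L}$ equals multiplication by $3$, so any $\alpha\in\Br(L)$ with $\textup{res}_{F_k/L}(\alpha)=0$ for all $k$ is annihilated by both $2$ and $3$, and hence vanishes.

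The main obstacle will be surjectivity of the diagonal map $\Br(L)\to\ker\partial$. My plan here is to pass to the Galois closure $M = L(\sqrt{p_1},\sqrt{p_2},\sqrt{p_3},\sqrt[3]{p_3},\zeta_3)$, over which $Z^f_M$ becomes a connected union of eleven copies of $\PP^1_M$ meeting at $M$-rational points, whose bipartite intersection graph is $K_{6,5}$; the analogous long exact sequence then collapses to $\Br(Z^f_M) = \Br(M)$. A Hochschild-Serre / Galois-descent argument for $Z^f_M\to Z^f_L$ should then identify $\Br(Z^f_L)$ with $\Gal(M/L)$-invariant descent data inside $\Br(Z^f_M) = \Br(M)$, modulo an obstruction in $H^2(\Gal(M/L),M^\times)$. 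Verifying that this obstruction vanishes on any element of $\ker\partial$, using the compatibility conditions together with the precise arithmetic of $p_1, p_2, p_3$, is the delicate step where I expect most of the work to lie.
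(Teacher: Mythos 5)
Your setup agrees with the paper's: you form the same Mayer--Vietoris-type short exact sequence of \'etale sheaves
$0 \to \GG_{m,Z^f_L} \to i_{1*}\GG_{m,C_1}\oplus i_{2*}\GG_{m,C_2} \to i_{3*}\GG_{m,C_1\cap C_2} \to 0$
with $C_1 = V(g)\times\PP^1$ and $C_2 = \PP^1\times V(h)$, you identify the components over the same quadratic and cubic extensions, and you correctly reduce the computation to exactness of
$\Br(L) \to \bigoplus_k\Br(F_k) \to \bigoplus_{i,k}\Br(F_{ik})$.
Your injectivity argument via restriction--corestriction (using the degrees $2$ and $3$ of $F_1/L$ and $F_5/L$) is valid and is in fact the alternative route the paper's own remark alludes to.

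The surjectivity step, however, is a gap: you only describe a plan (pass to the Galois closure $M$, compute $\Br(Z^f_M)=\Br(M)$, then descend) and you explicitly flag it as unverified. That plan as stated is also incomplete even as a plan. The Hochschild--Serre spectral sequence for $Z^f_M\to Z^f_L$ has $E_2^{1,1}=H^1\bigl(\Gal(M/L),\Pic(Z^f_M)\bigr)$ contributing to $\Br(Z^f_L)$ in addition to the $H^2(\Gal(M/L),M^\times)$ and $\Br(M)^{\Gal(M/L)}$ terms you mention, and for the $K_{6,5}$ configuration $\Pic(Z^f_M)$ contains a torus piece of rank $20$ (the first Betti number of $K_{6,5}$) with nontrivial Galois action, so this term has no a priori reason to vanish. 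Controlling it would be genuine extra work. The paper instead proves surjectivity directly over $L$, avoiding Galois descent altogether: it evaluates $\Br(C_1)$ against an adelic point $(P_{v'})\in C_1(\AA_L)$ (guaranteed to exist by Lemma~\ref{lemma zero dimension scheme violate HP}), feeds the result into the class-field-theory exact sequence $0\to\Br(L)\to\bigoplus_{v'}\Br(L_{v'})\to\QQ/\ZZ\to 0$ and its analogues over $L_{0j}$, and then exploits the coprimality of $[L_{01}:L]=2$ and $[L_{02}:L]=3$ to kill the $\QQ/\ZZ$ obstruction and pin down a global class $a\in\Br(L)$ mapping to the given element of $\ker\partial$. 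You would need to either carry out the Galois-descent computation in full (including the Picard contribution) or switch to an argument of this adelic type to close the gap.
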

	
	\begin{proof}
		Let $C_1$ (respectively $C_2$) be the locus defined over $L$ by the equation $(x_0^2-p_1x_1^2)(x_0^2-p_2x_1^2)(x_0^2-p_1p_2x_1^2)=0$ (respectively $(y_0^2-p_3y_1^2)(y_0^3-p_3y_1^3)=0$) in $\PP^1\times\PP^1$ with bi-homogeneous coordinates $(x_0:x_1)\times(y_0:y_1).$
		Then $C_1$ and $C_2$ are smooth curves in $Z^f_L,$ and $Z^f_L=C_1 \cup C_2.$
		Let $i_1,$ $i_2$ and $i_3$ be the natural embeddings of $C_1,$  $C_2$ and $C_1\cap C_2$ in $C$ respectively. Similar to the proof of Lemma \ref{lemma two projective lines Brauer group}, we have the following exact sequence of \'etale sheaves on $Z^f_L\colon$
		$$0\to \Ocal_{Z^f_L}\to i_{1*}\Ocal_{C_1}\oplus i_{2*}\Ocal_{C_2}\to i_{3*}\Ocal_{C_1\cap C_2}\to 0,$$
		where the map $i_{2*}\Ocal_{C_2}\to i_{3*}\Ocal_{C_1\cap C_2}$ is the opposite of the restriction map, and other maps are canonical restriction maps. This sequence gives rise to an exact sequence of \'etale sheaves on $C\colon$
		$$	0\to\GG_{m,Z^f_L}\to i_{1*}\GG_{m,C_1}\oplus i_{2*}\GG_{m,C_2}\to i_{3*}\GG_{m,C_1\cap C_2}\to 0.$$
		By the long exact sequence of \'etale cohomology, we have the following exact sequence:
		$$ H^1_{\et}(Z^f_L,i_{3*}\GG_{m,C_1\cap C_2})\to 
		H^2_{\et}(Z^f_L,\GG_{m})\to H^2_{\et}(Z^f_L,i_{1*}\GG_{m,C_1}\oplus i_{2*}\GG_{m,C_2})\to H^2_{\et}(Z^f_L,i_{3*}\GG_{m,C_1\cap C_2}).$$
		For $i_1,$ $i_2$ and $i_3$ are closed embeddings, it gives the following exact sequence:
		\begin{equation}\label{ee1}
			H^1_{\et}(C_1\cap C_2,\GG_{m})\to 
			\Br(Z^f_L)\to \Br(C_1)\oplus \Br(C_2)\to \Br(C_1\cap C_2).
		\end{equation}
		By our choice, two different places $v_{p_1}$ and $v_{p_2}$ split in $L,$ so we have number fields $L(\sqrt{p_1}),$ $L(\sqrt{p_2}),$ $L(\sqrt{p_1p_2}),$ denoted by $L_{10},L_{20},L_{30}$ respectively. And 
		$$C_1\cong (\Spec L_{10}\times_{\Spec L} \PP^1)\bigsqcup (\Spec L_{20}\times_{\Spec L} \PP^1)\bigsqcup
		(\Spec L_{30}\times_{\Spec L}\PP^1).$$ 
		So $\Br(C_1)\cong \bigoplus_{i=1}^3\Br(L_{i0}).$
		
		Similarly, we have number fields $L(\sqrt{p_3}),$ $L(\sqrt[3]{p_3}),$ denoted by $L_{01},L_{02}$ respectively. And 
		$$C_2\cong (\PP^1\times_{\Spec L} \Spec L_{01}) \bigsqcup (\PP^1\times_{\Spec L} \Spec L_{02}).$$
		Then $\Br(C_2)\cong \bigoplus_{j=1}^2 \Br(L_{0j}).$
		
		Since the different places $v_{p_1},$ $v_{p_2}$ and $v_{p_3}$ split in $L,$ for any $i\in \{1,2,3\},$ and any $j\in \{1,2\},$ we have number fields
		$L_{i0}\otimes_L  L_{0j},$ denoted by $L_{ij}.$ Then
		$$C_1\cap C_2\cong \bigsqcup_{i=1}^3\bigsqcup_{j=1}^2 \Spec L_{ij}.$$
		So $\Br(C_1\cap C_2)\cong  \bigoplus_{i=1}^3  \bigoplus_{j=1}^2 \Br(L_{ij}).$
		
		By Hilbert's Theorem $90,$ we have $H^1_{\et}(C_1\cap C_2,\GG_{m})=0.$
		By the exact sequence (\ref{ee1}), we have an exact sequence: 
		\begin{equation}\label{ee2}
			0\to \Br(Z^f_L) \to \Br(C_1)\oplus \Br(C_2) \to \Br(C_1\cap C_2).
		\end{equation}
		By Lemma \ref{lemma zero dimension scheme violate HP}, the set $Z^f_L(\AA_L)\neq \emptyset.$ Indeed, from the prove of Lemma \ref{lemma zero dimension scheme violate HP}, the set $C_1(\AA_L)\neq \emptyset.$ We take an adelic point $(P_{v'})_{v'\in \Omega_L}\in C_1(\AA_L),$ then the evaluation of elements in $\Br(C_1)$ on this adelic point gives a map: $\Br(C_1)\to \bigoplus_{v'\in \Omega_L}\Br(L_{v'}),$ 
		which makes the following diagram: 
		$$ \xymatrix{
			\Br(L)\ar[rd] \ar[r]&\Br(Z^f_L) \ar[r]& Br(C_1) \ar[ld] \\
			&\bigoplus_{v'\in \Omega_L}\Br(L_{v'})&  \\
		}$$
		commutative. By the reciprocity law of global class field theory, the map $\Br(L)\to \bigoplus_{v'\in \Omega_L}\Br(L_{v'})$ is injective, so the natural map $\Br(L)\to \Br(Z^f_L)$ is injective. We have the following commutative diagram:
		$$ \xymatrix{
			& \Br(L)\ar@{^(->}[d] & & &\\
			0  \ar[r]&\Br(Z^f_L) \ar[r]\ar@{=}[d]& \Br(C_1)\oplus \Br(C_2) \ar[r]\ar[d]^{\cong}& \Br(C_1\cap C_2)\ar[d]^{\cong}& \\
			0\ar[r]&\Br(Z^f_L)\ar[r] & \bigoplus_{i=1}^3\Br(L_{i0}) \oplus \bigoplus_{j=1}^2 \Br(L_{0j}) 
			\ar[r]& \bigoplus_{i=1}^3 \bigoplus_{j=1}^2 \Br(L_{ij})& \\
		}$$ 
		with exact rows. Next, we will prove that the natural map $\Br(L)\to \Br(Z^f_L)$ is surjective. By the commutative diagram, we need to prove that the sequence:  
		$$\Br(L)\to \bigoplus_{i=1}^3\Br(L_{i0}) \oplus \bigoplus_{j=1}^2 \Br(L_{0j}) 
		\to \bigoplus_{i=1}^3 \bigoplus_{j=1}^2 \Br(L_{ij})$$
		is exact. Notice that by our choice, the map $\bigoplus_{j=1}^2 \Br(L_{0j}) 
		\to \bigoplus_{i=1}^3 \bigoplus_{j=1}^2 \Br(L_{ij})$ is the opposite of the restriction map, and other maps are canonical restriction maps. Take an element $(\alpha_{i0},\alpha_{0j})\in \bigoplus_{i=1}^3\Br(L_{i0}) \oplus \bigoplus_{j=1}^2 \Br(L_{0j}).$ Suppose that it goes to zero in $\bigoplus_{i=1}^3 \bigoplus_{j=1}^2 \Br(L_{ij}).$ So the restrictions of $\alpha_{i0}$ and $\alpha_{0j}$ to $\Br(L_{ij})$ coincide. Also consider the adelic point $(P_{v'})_{v'\in \Omega_L}$ and the map: $\Br(C_1)\to \bigoplus_{v'\in \Omega_L}\Br(L_{v'}).$ By $\Br(C_1)\cong \bigoplus_{i=1}^3\Br(L_{i0}),$ we view $(\alpha_{i0})$ as an element in $\Br(C_1)$ and let  $(a_{v'})_{v'\in \Omega_L}$ be its image in $\bigoplus_{v'\in \Omega_L}\Br(L_{v'}).$ Then for any $j\in \{1,2\},$ we have the following commutative diagram:
		$$ $$
		\begin{equation}\label{ee3}
			\xymatrix{
				& & \Br(C_1) \ar[d]& & \\
				0\ar[r]&\Br(L) \ar[d]\ar[r] & \bigoplus_{v'\in \Omega_L}\Br(L_{v'}) \ar[d]\ar[r]& \QQ/\ZZ \ar[d]^{[L_{0j}:L]}\ar[r]& 0 \\
				0\ar[r]&\Br(L_{0j}) \ar[r] & \bigoplus_{v'\in \Omega_L}\Br(L_{0j}\otimes_L L_{v'}) \ar[r]& \QQ/\ZZ \ar[r]& 0. \\
			}
		\end{equation}
		By the reciprocity law of global class field theory, two rows of this diagram are exact. For the restrictions of $\alpha_{i0}$ and $\alpha_{0j}$ to $\Br(L_{ij})$ coincide, the restrictions of  $(a_{v'})_{v'\in \Omega_L}$ and $\alpha_{0j}$ to $\bigoplus_{v'\in \Omega_L}\Br(L_{0j}\otimes_L L_{v'})$ coincide. So $[L_{0j}:L]\sum_{v'\in \Omega_L}\inv_{v'}(a_{v'})=0$ in $\QQ/\ZZ.$ For the degrees $[L_{01}:L]=2$ and $[L_{02}:L]=3,$ we have $\sum_{v'\in \Omega_L}\inv_{v'}(a_{v'})=0$ in $\QQ/\ZZ.$ By the exact sequence of the first row, let $a\in \Br(L)$ be the element such 
		that its image in $\bigoplus_{v'\in \Omega_L}\Br(L_{v'})$ equals $(a_{v'})_{v'\in \Omega_L}.$ Let $a|_{L_{0j}}$ and $a|_{L_{i0}}$ be the restrictions of $a$ to $\Br(L_{0j})$ and $\Br(L_{i0})$ respectively. Then from the diagram (\ref{ee3}), we have $a|_{L_{0j}}=\alpha_{0j}.$ For any $i\in \{1,2,3\},$ we consider the element $\alpha_{i0}-a|_{L_{i0}}.$ For the restrictions of  $\alpha_{i0}-a|_{L_{i0}}$ and $\alpha_{0j}-a|_{L_{0j}}=0$ to $\Br(L_{ij})$ coincide, they are zero in $\Br(L_{ij}).$ By the standard restriction-corestriction argument, we have $[L_{0j}:L](\alpha_{i0}-a|_{L_{i0}})=0$ in $\Br(L_{i0}).$ For the degrees $[L_{01}:L]=2$ and $[L_{02}:L]=3,$ we have  $a|_{L_{i0}}=\alpha_{i0}.$ So the element $a$ maps to the element $(\alpha_{i0},\alpha_{0j}),$ which implies that the map $\Br(L)\to \Br(Z^f_L)$ is surjective.
	\end{proof}
	
	\begin{remark}
		In our proof, the map $\Br(C_1)\to \bigoplus_{v'\in \Omega_L}\Br(L_{v'}),$ depends on the choice of the adelic point $(P_{v'})_{v'\in \Omega_L}$ in $C_1(\AA_L).$ We use this adelic point to illustrate that  the map $\Br(L)\to \Br(Z^f_L)$ is injective. In order to prove this injection, by
		using the information from $C_2,$ the curve $Z^f_L$ contains closed points of degree $2$ and $3,$ then one can use the standard restriction-corestriction argument to get this injection. The idea to proof that this map is surjective, comes from \cite[Proposition 3.1]{HS14}.
	\end{remark}
	
	\subsubsection{Choosing another curve with respect to an extension}\label{subsection choose a polynomial with respect an extension2}
	In this subsubsection, we will choose another curve with some given arithmetic properties.
	Given an extension of number fields $L/K,$ 
	similar to the choice of $p_1,$ we can choose an odd prime element $p_4\in \Ocal_K$ satisfying the following conditions:
	\begin{itemize}
		\item $\tau_v(p_4)>0$ for all $v\in \infty_K^r,$
		\item $p_4\in K_v^{\times 2}$  for all $v\in 2_K,$ 
		\item $p_4$ splits in $L,$
		\item $v_{p_4}\notin\{v_{p_1},v_{p_2},v_{p_3}\}.$ 
	\end{itemize}
	By Lemmas \ref{lemma for hilbert prime element -1} and \ref{lemma dirichlet}, we choose an odd prime element $p_5\in \Ocal_K$ satisfying the following conditions:
	\begin{itemize}
		\item $(p_4,p_5)_{v_{p_4}}=-1,$
		\item $v_{p_5}\notin\{v_{p_1},v_{p_2},v_{p_3},v_{p_4}\}.$ 
	\end{itemize}
	Similarly, by Lemmas \ref{lemma for hilbert prime element -1} and \ref{lemma dirichlet}, we choose an odd prime element $p_6\in \Ocal_K$ satisfying the following conditions:
	\begin{itemize}
		\item $(p_4,p_6)_{v_{p_4}}=-1,$
		\item $(p_5,p_6)_{v_{p_5}}=1,$
		\item $v_{p_6}\notin\{v_{p_1},v_{p_2},v_{p_3},v_{p_4},v_{p_5}\}.$ 
	\end{itemize}
	Let $g(x_0,x_1;y_0,y_1)=(x_0^2-p_4x_1^2)(x_0^2-p_5x_1^2)(x_0^2-p_4p_5x_1^2)(y_0^2-p_6y_1^2)(y_0^3-p_4y_1^3)$ be a bi-homogeneous polynomial, and let $Z^g$ be the zero locus of $g$ in $\PP^1\times\PP^1$ with bi-homogeneous coordinates $(x_0:x_1)\times(y_0:y_1).$ With the notation, we have the following lemma.
	
	\begin{lemma}\label{lemma zero dimension scheme without local point}
		Let $Z^g\subset \PP^1\times\PP^1$ be the zero locus defined over $K$ by the bi-homogeneous polynomial $g(x_0,x_1;y_0,y_1).$  Then $Z^g(\AA_K^{\{v_{p_4}\}})\neq\emptyset$ but $Z^g(K_{v_{p_4}})=\emptyset.$
	\end{lemma}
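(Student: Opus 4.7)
The plan is to exploit the factorization $g=g_x(x_0,x_1)\cdot g_y(y_0,y_1)$ into polynomials in disjoint sets of variables, which identifies
$$Z^g=(C_x\times\PP^1)\cup(\PP^1\times C_y),$$
where $C_x\subset\PP^1$ is the zero locus of $g_x=(x_0^2-p_4x_1^2)(x_0^2-p_5x_1^2)(x_0^2-p_4p_5x_1^2)$ and $C_y\subset\PP^1$ is the zero locus of $g_y=(y_0^2-p_6y_1^2)(y_0^3-p_4y_1^3)$. For any field extension $F/K$, one then has $Z^g(F)\neq\emptyset$ if and only if $C_x(F)\neq\emptyset$ or $C_y(F)\neq\emptyset$, so the lemma reduces to a purely local question about these two simpler $\PP^1$-subschemes, place by place.

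To prove $Z^g(K_{v_{p_4}})=\emptyset$, I would rule out $K_{v_{p_4}}$-points on both $C_x$ and $C_y$. Since $v_{p_4}(p_4)=1$ is odd, neither $p_4$ nor $p_4p_5$ lies in $K_{v_{p_4}}^{\times 2}$; the hypothesis $(p_4,p_5)_{v_{p_4}}=-1$ combined with Lemma \ref{lemma for two prime elements one is square for another} gives $p_5\notin K_{v_{p_4}}^{\times 2}$, so $C_x(K_{v_{p_4}})=\emptyset$. For $C_y$, the hypothesis $(p_4,p_6)_{v_{p_4}}=-1$ gives $p_6\notin K_{v_{p_4}}^{\times 2}$, and $v_{p_4}(p_4)=1$ is not divisible by $3$, so $p_4\notin K_{v_{p_4}}^{\times 3}$; hence $C_y(K_{v_{p_4}})=\emptyset$.

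For $Z^g(K_v)\neq\emptyset$ at every remaining place $v\neq v_{p_4}$, I would split into three cases. First, if $v\in\infty_K\cup 2_K$, the choice of $p_4$ forces $p_4\in K_v^{\times 2}$ (via $\tau_v(p_4)>0$ for real $v$, trivially for complex $v$, and by construction for dyadic $v$), so $x_0^2-p_4x_1^2$ has a $K_v$-root and $C_x(K_v)\neq\emptyset$. Second, at $v=v_{p_5}$, the hypothesis $(p_5,p_6)_{v_{p_5}}=1$ combined with Lemma \ref{lemma for two prime elements one is square for another} gives $p_6\in K_{v_{p_5}}^{\times 2}$, hence $C_y(K_{v_{p_5}})\neq\emptyset$. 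Third, at any finite non-dyadic place $v\notin\{v_{p_4},v_{p_5}\}$, both $p_4$ and $p_5$ are units in $\Ocal_{K_v}$; since $[\Ocal_{K_v}^\times:\Ocal_{K_v}^{\times 2}]=2$, one of $p_4,p_5,p_4p_5$ must be a square in $K_v^\times$, yielding $C_x(K_v)\neq\emptyset$.

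There is no serious obstacle beyond careful bookkeeping: the argument uses only valuation parity, the chosen Hilbert symbol conditions on $p_4,p_5,p_6$, and Lemma \ref{lemma for two prime elements one is square for another}. The novelty relative to Lemma \ref{lemma zero dimension scheme violate HP} is the presence of the cubic factor $y_0^3-p_4y_1^3$, which is needed exclusively to obstruct $C_y$ at $v_{p_4}$ (via $3\nmid v_{p_4}(p_4)$) and plays no role elsewhere in the case analysis.
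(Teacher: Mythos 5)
Your proof is correct and follows essentially the same route as the paper: the same case-by-case analysis over places, the same use of the Hilbert-symbol hypotheses via Lemma \ref{lemma for two prime elements one is square for another}, and the same valuation-parity and cube arguments at $v_{p_4}$. The only cosmetic difference is that where the paper cites the quadratic reciprocity law for nondyadic $v\notin\{v_{p_4},v_{p_5}\}$, you give the cleaner direct argument that $[\Ocal_{K_v}^\times:\Ocal_{K_v}^{\times 2}]=2$ forces one of $p_4,p_5,p_4p_5$ to be a square.
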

	
	\begin{proof}
		Suppose that $v\in \infty_K\cup 2_K.$  Then, by the choice of $p_4,$ we have $p_4\in K_v^{\times 2}.$ So the equation $x_0^2-p_4x_1^2=0$ has a $K_v$-solution in $\PP^1$ with homogeneous coordinates $(x_0:x_1).$\\
		Suppose that $v=v_{p_5}.$ Then, by the choice of $p_6,$ we have $(p_5,p_6)_{v_{p_5}}=1.$ By Lemma \ref{lemma for two prime elements one is square for another}, we have $p_6\in K_{v_{p_5}}^{\times 2}.$
		So the equation $y_0^2-p_6y_1^2=0$ has a $K_v$-solution in $\PP^1$ with homogeneous coordinates $(y_0:y_1).$\\
		Suppose that $v\in \Omega_K\backslash( \infty_K\cup 2_K\cup\{v_{p_4},v_{p_5}\}),$ then,
		by the quadratic reciprocity law, at least one of equations: $x_0^2-p_4x_1^2=0,~x_0^2-p_5x_1^2=0,~x_0^2-p_4p_5x_1^2=0,$ has a $K_v$-solution in $\PP^1$ with homogeneous coordinates $(x_0:x_1).$\\
		So  $Z^g(\AA_K^{\{v_{p_4}\}})\neq\emptyset.$
		
		Suppose that $v=v_{p_4}.$ Then the equations $x_0^2-p_4x_1^2=0,~x_0^2-p_4p_5x_1^2=0$ and $y_0^3-p_4y_1^3=0$ has no $K_v$-solution in $\PP^1$ with homogeneous coordinates $(x_0:x_1)$ and $(y_0:y_1)$ respectively.  By the choice of $p_5,p_6,$ we have $(p_4,p_5)_{v_{p_4}}=-1$ and $(p_4,p_6)_{v_{p_4}}=-1.$ By Lemma \ref{lemma for two prime elements one is square for another}, we have $p_5\notin K_{v_{p_4}}^{\times 2}$ and $p_6\notin K_{v_{p_4}}^{\times 2}.$ So the equations $x_0^2-p_5x_1^2=0$ and $y_0^2-p_6y_1^2=0$ have no $K_v$-solution in $\PP^1$ with homogeneous coordinates $(x_0:x_1)$ and $(y_0:y_1)$ respectively. So $Z^g(K_{v_{p_4}})=\emptyset.$
	\end{proof}

	\begin{eg}\label{example: choose prime elements}
		For $K=\QQ$ and $L=\QQ(i),$ let prime elements $(p_1,p_2,p_3,p_4,p_5,p_6)=(17,13,53,41,3,13).$ Then they satisfy all chosen conditions of Subsubsections \ref{subsection choose a polynomial with respect an extension} and \ref{subsection choose a polynomial with respect an extension2}. They will be used for construction of our explicit unconditional example.
	\end{eg}

	\begin{theorem}\label{theorem main result: non-invariance of the Hasse principle with BMO for odd degree}
		For any nontrivial extension of number fields $L/K,$ assuming that Conjecture \ref{conjecture Stoll} holds over $K,$ there exists a smooth, projective, and geometrically connected surface  $X$ defined over $K$ such that
		\begin{itemize} 
			\item the surface $X$ is a counterexample to the Hasse principle, and its failure of the Hasse principle is explained by the Brauer-Manin obstruction,
			\item the surface $X_L$ is a counterexample to the Hasse principle, but its failure of the Hasse principle cannot be explained by the Brauer-Manin obstruction.
		\end{itemize}
	\end{theorem}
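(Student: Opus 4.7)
The plan is to adapt the fibration construction of Theorem \ref{theorem main result: non-invariance of weak approximation with BMO}, replacing the smooth conic $C_\infty$ and the pair of crossing lines $C_0$ by the zero-dimensional obstruction carriers of Subsections \ref{subsection choose a polynomial with respect an extension} and \ref{subsection choose a polynomial with respect an extension2}. Concretely, I take $C_\infty := Z^g$ and $C_0 := Z^f$, and form the pencil
\[
s' = u_0\, g(x_0,x_1;y_0,y_1) + u_1\, f(x_0,x_1;y_0,y_1) \in \Gamma(\PP^1\times\PP^1\times\PP^1,\Ocal(1,6,5)),
\]
with zero locus $X'$. Let $R\subset\PP^1$ be the (finite) locus of non-smoothness of $pr_1\colon X'\to\PP^1$. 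By Lemma \ref{lemma stoll conjecture} I choose a type $I$ triple $(C,K,L)$; by Lemma \ref{lemma choose base change morphism} I produce $\gamma\colon C\to\PP^1$ étale over $R$ with $\gamma(C(K))=\{\infty\}$ and $\gamma(C(L)\setminus C(K))=\{0\}$. Set $X := X'\times_{\PP^1,\gamma} C$. Exactly as in Theorem \ref{theorem main result: non-invariance of weak approximation with BMO}, étaleness of $\gamma$ over $R$ yields smoothness of $X$, and ampleness of the pulled-back $\Ocal(1,6,5)$ yields geometric connectedness, so $X$ is a smooth, projective, geometrically connected surface. The natural map $\beta\colon X\to C$ has fiber $\cong Z^g$ over each $C(K)$-point and $\cong Z^f$ over each point of $C(L)\setminus C(K)$.

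Next I verify the $K$-level conditions. Any $K$-point of $X$ maps under $\beta$ into $C(K)$, so its image lies in the fiber $\cong Z^g$; but $Z^g(K_{v_{p_4}})=\emptyset$ by Lemma \ref{lemma zero dimension scheme without local point}, hence $X(K)=\emptyset$. To see $X(\AA_K)\neq\emptyset$ I fill in $K_v$-points on fibers over $C(K)$ for $v\neq v_{p_4}$ (where $Z^g(K_v)\neq\emptyset$), and at $v_{p_4}$ I exploit that $p_4$ splits in $L$: any $Q_0\in C(L)\setminus C(K)$ yields a $K_{v_{p_4}}$-point of $C$ (since $L_w\cong K_{v_{p_4}}$ for $w\mid v_{p_4}$), and the corresponding fiber is $\cong Z^f_{K_{v_{p_4}}}$, which has a $K_{v_{p_4}}$-point by Lemma \ref{lemma zero dimension scheme violate HP}. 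The key step is $X(\AA_K)^{\Br}=\emptyset$: any $(P_v)\in X(\AA_K)^{\Br}$ maps by functoriality to $(\beta(P_v))\in C(\AA_K)^{\Br}$; Stoll's conjecture for $C$, combined with finiteness of $C(K)$, forces $pr^{\infty_K}(C(\AA_K)^{\Br})=C(K)$, so $\beta(P_v)=P_0$ for a single $P_0\in C(K)$ and all $v\notin\infty_K$. In particular $P_{v_{p_4}}\in Z^g(K_{v_{p_4}})$, contradicting Lemma \ref{lemma zero dimension scheme without local point}. Thus the Hasse principle fails for $X$ and this failure is explained by the Brauer--Manin obstruction.

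For the $L$-story, the fibers over $C(K)\subset C(L)$ are $Z^g_L$, which has $Z^g_L(L_w)=Z^g(K_{v_{p_4}})=\emptyset$ at any $w\mid v_{p_4}$ (since $p_4$ splits in $L$), while fibers over $C(L)\setminus C(K)$ are $Z^f_L$ with $Z^f_L(L)=\emptyset$ by Lemma \ref{lemma zero dimension scheme violate HP}; hence $X_L(L)=\emptyset$. Fix $Q_0\in C(L)\setminus C(K)$: the fiber $X_{Q_0}\cong Z^f_L$ has $Z^f_L(\AA_L)\neq\emptyset$, giving $X_L(\AA_L)\neq\emptyset$; functoriality of the Brauer--Manin pairing for $X_{Q_0}\hookrightarrow X_L$ yields $Z^f_L(\AA_L)^{\Br(Z^f_L)}\subseteq X_L(\AA_L)^{\Br(X_L)}$, and by Lemma \ref{lemma zero dimension scheme trival Brauer group pairing} the former equals the nonempty set $Z^f_L(\AA_L)$, so $X_L(\AA_L)^{\Br}\neq\emptyset$. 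The crux of the argument is the $K$-side BMO vanishing, which requires precisely the pairing of Stoll's conjecture (to pin $\beta$-images down to a single $K$-point of $C$ away from $\infty_K$) with the local obstruction on $Z^g$; the remaining difficulty has already been absorbed into the design of the primes $p_1,\dots,p_6$, which was arranged so that $Z^g$ is empty at $v_{p_4}$ over $K$ while $Z^f_L$ has trivial algebraic Brauer obstruction over $L$.
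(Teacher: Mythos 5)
Your proposal follows the paper's own proof essentially step for step: the same pencil $u_0 g + u_1 f$ in $\PP^1\times\PP^1\times\PP^1$, the same choice of a type $I$ curve $C$ and a $\gamma\colon C\to\PP^1$ étale over $R$, the same fibration $\beta\colon X\to C$ with $Z^g$-fibers over $C(K)$ and $Z^f$-fibers over $C(L)\setminus C(K)$, the same appeal to Lemmas \ref{lemma zero dimension scheme violate HP}, \ref{lemma zero dimension scheme without local point}, \ref{lemma zero dimension scheme trival Brauer group pairing}, and the same use of Stoll's conjecture plus functoriality to kill $X(\AA_K)^{\Br}$ at $v_{p_4}$. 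The only (inessential) deviations are that you verify $X(K)=\emptyset$ directly from $Z^g(K_{v_{p_4}})=\emptyset$ rather than deducing it from $X(\AA_K)^{\Br}=\emptyset$, and that you produce the $v_{p_4}$-component of the adelic point via $Z^f(K_{v_{p_4}})\neq\emptyset$ rather than via $X_L(L_{v'})$; both are equivalent to the paper's argument.
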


	\begin{proof}
		We will construct a smooth, projective, and geometrically connected surface $X.$
		For the extension $L/K,$ we choose odd prime elements $p_1,p_2,p_3,p_4,p_5,p_6\in \Ocal_K$ 
		as in Subsubsections \ref{subsection choose a polynomial with respect an extension} and \ref{subsection choose a polynomial with respect an extension2}. Let $f(x_0,x_1;y_0,y_1)=(x_0^2-p_1x_1^2)(x_0^2-p_2x_1^2)(x_0^2-p_1p_2x_1^2)(y_0^2-p_3y_1^2)(y_0^3-p_3y_1^3)$ and $g(x_0,x_1;y_0,y_1)=(x_0^2-p_4x_1^2)(x_0^2-p_5x_1^2)(x_0^2-p_4p_5x_1^2)(y_0^2-p_6y_1^2)(y_0^3-p_4y_1^3)$ be two bi-homogeneous polynomials, and let $Z^f$ and $Z^g$ be the zero loci of $f$ and $g$ respectively in $\PP^1\times\PP^1$ with bi-homogeneous coordinates $(x_0:x_1)\times(y_0:y_1).$ Let $(u_0:u_1)\times(x_0:x_1)\times(y_0:y_1)$ be the coordinates of $\PP^1\times\PP^1\times\PP^1,$ and let $s'=u_0g(x_0,x_1;y_0,y_1)+u_1f(x_0,x_1;y_0,y_1)\in \Gamma(\PP^1\times\PP^1\times\PP^1,\Ocal(1,6,5)).$ 
		Let $X'$ be the locus defined by $s'=0$ in $\PP^1\times\PP^1\times\PP^1.$ For the curves $Z^f$ and $Z^g$ meet transversally, the locus $X'$ is smooth. Let $R$ be the locus over which the composition $ X'\hookrightarrow \PP^1\times\PP^1\times\PP^1  \stackrel{pr_1}\to\PP^1$ is not smooth. Then by \cite[Chapter III. Corollary 10.7]{Ha97}, it is finite over $K.$ 
		By the assumption that Conjecture \ref{conjecture Stoll} holds over $K,$ and Lemma \ref{lemma stoll conjecture}, we can take a smooth, projective, and geometrically connected curve $C$ defined over $K$ such that the triple $(C,K,L)$ is of type $I.$
		By Lemma \ref{lemma choose base change morphism}, we can choose a $K$-morphism $\gamma\colon  C\to \PP^1$ such that $\gamma(C(L)\backslash C(K))=\{0\}\subset \PP^1(K),$ $\gamma(C(K))=\{\infty\}\subset \PP^1(K),$ and that $\gamma$ is \'etale over $R.$ 
		Let $B=C\times \PP^1\times \PP^1,$ and let $(\gamma,id)\colon B\to\PP^1\times \PP^1\times \PP^1.$ Let $\Lcal=(\gamma,id)^*\Ocal(1,6,5),$ and let $s=(\gamma,id)^* (s')\in \Gamma(B,\Lcal).$ Let $X$ be the zero locus of $s$ in $B.$ 
		By the same argument as in the proof of Theorem \ref{theorem main result: non-invariance of weak approximation with BMO}, the surface $X$ is smooth, projective, and geometrically connected.
		Let $\beta\colon X \hookrightarrow B=C\times \PP^1\times \PP^1 \stackrel{pr_1}\to C$ be the composition morphism. By our construction, we have the following Cartesian diagram:
		$$\xymatrix{
			X \ar@{^(->}[d]\ar[r] \ar@/_3.5pc/[dd]_{\beta} & X'  \ar@{^(->}[d]  \\
			C\times \PP^1\times \PP^1 \ar^{pr_1}[d] \ar^{(\gamma,id)}[r]&  \PP^1\times \PP^1\times \PP^1 \ar^{pr_1}[d]\\
			C \ar^{\gamma}[r] & \PP^1  
		}$$
		
		Next, we will check that the surface $X$ has the properties.	
		
		We will show $X(\AA_K)\neq \emptyset.$
		For any $P\in C(K),$ the fiber  $\beta^{-1}(P)\cong Z^g.$ By Lemma \ref{lemma zero dimension scheme without local point}, the set $Z^g(\AA_K^{\{v_{p_4}\}})\neq \emptyset.$ So the set $X(\AA_K^{\{v_{p_4}\}})\neq \emptyset.$ For $v_{p_4}$ splits in $L,$ we take a place $v'\in \Omega_L^f$ above $v_{p_4}$ such that $K_{v_{p_4}}=L_{v'}.$ By Lemma \ref{lemma zero dimension scheme violate HP}, the set $Z^f(\AA_L)\neq \emptyset.$ Take a point $Q\in C(L)\backslash C(K),$ then the fiber $\beta^{-1}(Q)\cong Z^f_L.$	
		We have  $X(K_{v_{p_4}})=X_L(L_{v'})\supset \beta^{-1}(Q)(L_{v'})\cong Z^f(L_{v'})\neq \emptyset.$ So the set $X(\AA_K)\neq \emptyset.$\\
		We will show $X(\AA_K)^{\Br}=\emptyset.$	 By our choice and Conjecture \ref{conjecture Stoll}, the set $C(K)$ is finite, and $C(K)=pr^{\infty_K}(C(\AA_K)^{\Br}).$ By the functoriality of Brauer-Manin
		pairing, we have  $pr^{\infty_K}(X(\AA_K)^{\Br})\subset  \bigsqcup_{P\in C(K)}\beta^{-1}(P)(\AA_K^{\infty_K}).$ But by Lemma \ref{lemma zero dimension scheme without local point}, the set $Z^g(K_{v_{p_4}})=\emptyset,$ so we have
		$pr^{\infty_K}(X(\AA_K)^{\Br})\subset \bigsqcup_{P\in C(K)}\beta^{-1}(P)(\AA_K^{\infty_K})\cong Z^g(\AA_K^{\infty_K})\times C(K)=\emptyset,$ which implies that $ X(\AA_K)^{\Br}=\emptyset.$ \\
		So, the surface $X$ is a counterexample to the Hasse principle, and its failure of the Hasse principle is explained by the Brauer-Manin obstruction.
		
		We will show $X_L(\AA_L)^{\Br}\neq \emptyset.$ Take a point $Q\in C(L)\backslash C(K).$
		By Lemma \ref{lemma zero dimension scheme trival Brauer group pairing}, the set $Z^f_L(\AA_L)^\Br=Z^f_L(\AA_L).$ By Lemma \ref{lemma zero dimension scheme violate HP}, it is nonempty.
		By the functoriality of Brauer-Manin pairing, the set  $X_L(\AA_L)^{\Br}$ contains
		$\beta^{-1}(Q)(\AA_L)^{\Br}\cong Z^f_L(\AA_L)^{\Br},$ so $X_L(\AA_L)^{\Br}\neq \emptyset.$\\
		We will show $X(L)=\emptyset.$ 
		By Lemma \ref{lemma zero dimension scheme without local point} and the condition that $v_{p_4}$ splits in $L,$ we have $Z^g(\AA_L)=\emptyset,$ so the set $Z^g(L)=\emptyset.$ By Lemma \ref{lemma zero dimension scheme violate HP}, the set $Z^f(L)=\emptyset.$ 
		Since each $L$-rational fiber of $\beta$ is isomorphic to $Z^g_L$ or $Z^f_L,$ the set $X(L)=\emptyset.$	\\
		So,
		the variety $X_L$ is a counterexample to the Hasse principle, but its failure of the Hasse principle cannot be explained by the Brauer-Manin obstruction.
	\end{proof}

	\section{Explicit unconditional examples}

	In this section, let $K=\QQ$ and $L=\QQ(i).$ For this extension $L/K,$ we will give explicit examples without assuming Conjecture \ref{conjecture Stoll} for Theorem \ref{theorem main result: non-invariance of weak approximation with BMO} and Theorem \ref{theorem main result: non-invariance of the Hasse principle with BMO for odd degree}.

	\subsection{Choosing an elliptic curve and a dominant morphism} 
	For the extension $L/K,$ as in the proof of  Theorem \ref{theorem main result: non-invariance of weak approximation with BMO} and Theorem \ref{theorem main result: non-invariance of the Hasse principle with BMO for odd degree},  we can choose a common elliptic curve over $K$ for these examples.
	\subsubsection{Choosing an elliptic curve} For the extension $L/K,$ we will choose an elliptic curve such that the triple $(E,K,L)$ is of  type $I.$ Let $E$ be an elliptic curve defined over $\QQ$ by a homogeneous equation:
	$$w_1^2w_2=w_0^3-16w_2^3$$ in $\PP^2$ with homogeneous coordinates $(w_0:w_1:w_2).$ Its quadratic twist $E^{(-1)}$ is isomorphic to an elliptic curve defined by a homogeneous equation:
	$w_1^2w_2=w_0^3+16w_2^3.$ The elliptic curves $E$ and $E^{(-1)}$ over $\QQ,$ are  of analytic rank $0.$ Then the Tate-Shafarevich group $\Sha(E,K)$ is finite, so the curve $E$ satisfies weak approximation with Brauer-Manin obstruction off $\infty_K.$ The Mordell-Weil groups $E(K)$ and $E^{(-1)}(K)$ are both finite, so the group $E(L)$ is finite. Using \cite[SageMath]{St12}, we check that the Mordell-Weil group $E(K)=\{(0:1:0)\}$ and $E(L)=\{(0:\pm 4i:1),(0:1:0)\}.$ So the triple $(E,K,L)$ is of type $I.$
	

	\subsubsection{Choosing a dominant morphism}
	We choose the following dominant morphism from the elliptic curve $E$ to $\PP^1,$ which satisfies some conditions of Lemma \ref{lemma choose base change morphism}.

	Let $\PP^2\backslash\{(1:0:0),(-16:0:1),(-1:\pm \sqrt{15}i:1)\}\to \PP^1$ be a morphism over $\QQ$ given by $(w_0:w_1:w_2)\mapsto(w_0w_2+w_1^2+16w_2^2:w_0w_1+w_1w_2).$ Composing  the natural inclusion $E\hookrightarrow \PP^2\backslash\{(1:0:0),(-16:0:1),(-1:\pm \sqrt{15}i:1)\}$ with it, we get a morphism $\gamma\colon E\to \PP^1,$ which is a dominant morphism of degree $6.$ The dominant morphism $\gamma$ maps $E(K)$ to $\{\infty\}=\{(1:0)\},$ and maps $(0:\pm 4i:1)$ to $0:=(0:1).$  By B\'ezout's Theorem \cite[Chapter I. Corollary 7.8]{Ha97} and calculation, the branch locus of $\gamma$  is contained in $\PP^1\backslash\{\infty\}.$ Let $(u_0:1)\in \PP^1$ be a branch point of $\gamma.$ For fixed $u_0,$ we use Jacobian criterion for the intersection of two curves $E$ and $w_0w_2+w_1^2+16w_2^2=(w_0w_1+w_1w_2)u_0$ in $\PP^2.$ For the point $(0:1:0)\in \PP^2$ is not in this intersection, we 
	let $w_2=1$ to dehomogenize these two curves. By Jacobian criterion, the branch locus satisfies the following equations:
	\begin{equation*}
		\begin{cases}
			w_1^2=w_0^3-16\\
			w_1^2+w_0+16=w_1(w_0+1)u_0\\
			3(2w_1-w_0u_0-u_0)w_0^2+2w_1(1-w_1u_0)=0.
		\end{cases}
	\end{equation*}
	Then 
	the branch locus equals 
	$$\left\{(u_0:1)\big{|} u_0^{12} + \frac{60627u_0^{10}}{4913} + \frac{159828u_0^8}{4913} - \frac{3505917u_0^6}{19652} - \frac{42057961u_0^4}{58956} + \frac{76076u_0^2}{14739} - \frac{4112}{132651}=0\right\}.$$ Let $(u_0:1)$ be a branch point, then the degree $[\QQ(u_0):\QQ]=12.$

	\subsection{An explicit unconditional example for Theorem \ref{theorem main result: non-invariance of weak approximation with BMO}}\label{subsection main example1}
	
	For $K=\QQ$ and $L=\QQ(i),$ in this subsection,
	we will construct a smooth, projective, and geometrically connected surface having properties of Theorem \ref{theorem main result: non-invariance of weak approximation with BMO}.

	\subsubsection{Construction of a smooth, projective, and geometrically connected surface}
	We will construct a smooth, projective, and geometrically connected surface $X$ as in Theorem \ref{theorem main result: non-invariance of weak approximation with BMO}. 
	Let $(u_0:u_1)\times(x_0:x_1:x_2)$ be the coordinates of $\PP^1\times\PP^2,$ and let $s'=u_0(x_0^2+x_1^2-x_2^2)+u_1(x_0^2-x_1^2)\in \Gamma(\PP^1\times\PP^2,\Ocal(1,2)).$ The locus $X'$ defined by $s'=0$ in $\PP^1\times\PP^2$ is smooth. 
	Let $R$ be the locus over which the composition $ X'\hookrightarrow \PP^1\times\PP^2  \stackrel{pr_1}\to\PP^1$ is not smooth.  By calculation, the locus 
	$R=\{(0:1),(\pm 1:1)\}.$ 
	Let $B=E\times \PP^2,$ and let $(\gamma,id)\colon B\to\PP^1\times \PP^2.$ Let $\Lcal=(\gamma,id)^*\Ocal(1,2),$ and let $s=(\gamma,id)^* (s')\in \Gamma(B,\Lcal).$ Let $X$ be the zero locus of $s$ in $B.$
	For the locus $R$
	does not intersect with the branch locus of  $\gamma\colon E\to \PP^1,$ the surface $X$ is smooth.
	So it is smooth, projective, and geometrically connected. 
	By our construction, the surface $X$ is defined by the following equations: 
	\begin{equation*}
		\begin{cases}
			(w_0w_2+w_1^2+16w_2^2)(x_0^2+x_1^2-x_2^2)+(w_0w_1+w_1w_2)(x_0^2-x_1^2)=0\\ 
			w_1^2w_2=w_0^3-16w_2^3
		\end{cases}
	\end{equation*}
	in $\PP^2\times \PP^2$ with bi-homogeneous coordinates $(w_0:w_1:w_2)\times(x_0:x_1:x_2).$
	For this surface $X,$ we have the following proposition.

	\begin{prop}\label{example1: main result: non-invariance of weak approximation with BMO}
		For $K=\QQ$ and $L=\QQ(i),$  the smooth, projective, and geometrically connected surface $X$ has the following properties.	
		\begin{itemize}
			\item The surface $X$ has a $K$-rational point, and satisfies weak approximation with Brauer-Manin obstruction 
			off $\infty_K.$
			\item The surface $X_L$ does not satisfy weak approximation with Brauer-Manin obstruction 
			off $T$ for any finite subset $T\subset \Omega_L.$ 
		\end{itemize}
	\end{prop}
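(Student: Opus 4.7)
The plan is to specialize the proof of Theorem \ref{theorem main result: non-invariance of weak approximation with BMO} to this explicit setup, using $(C,K,L)=(E,\QQ,\QQ(i))$ for the elliptic curve $E\colon w_1^2w_2=w_0^3-16w_2^3$ and the specific degree-$6$ morphism $\gamma\colon E\to\PP^1$ described above. First I would verify unconditionally that $(E,\QQ,\QQ(i))$ is of type $I$: the curves $E$ and its quadratic twist $E^{(-1)}$ both have analytic rank $0$, so by Kolyvagin's theorem their Mordell--Weil and Tate--Shafarevich groups over $\QQ$ are finite, hence $E(L)$ is finite. A direct descent calculation (already quoted in the excerpt) gives $E(K)=\{(0{:}1{:}0)\}\subsetneq E(L)=\{(0{:}1{:}0),(0{:}\pm 4i{:}1)\}$. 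By the Cassels--Tate dual sequence (as in Skorobogatov--Scharaschkin, and recalled in the remark after Conjecture \ref{conjecture Stoll}), Stoll's conjecture thus holds unconditionally for $E$.

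Next I would check that the surface $X$ is smooth, projective, and geometrically connected, by the same argument used in the proof of Theorem \ref{theorem main result: non-invariance of weak approximation with BMO}: the bihomogeneous polynomial $s'$ cuts out a smooth $X'$ in $\PP^1\times\PP^2$, its nonsmooth locus over $\PP^1$ is $R=\{(0{:}1),(\pm 1{:}1)\}$, and one verifies by direct substitution into the degree-$12$ polynomial displayed for the branch locus of $\gamma$ that $R$ is disjoint from the branch locus; hence the fibered product remains smooth, and the ampleness argument gives geometric connectedness. The composition $\beta\colon X\hookrightarrow E\times\PP^2\to E$ then has the property that the fiber over any point of $E(K)$ is isomorphic to the smooth conic $C_\infty\colon x_0^2+x_1^2-x_2^2=0$, and the fiber over each of $(0{:}\pm 4i{:}1)\in E(L)\setminus E(K)$ is isomorphic to $C_{0,L}\colon x_0^2-x_1^2=0$.

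From here the two bulleted properties follow by the same fibration arguments as in the theorem. A $K$-rational point of $X$ comes from any $K$-point of $C_\infty$, e.g.\ $(1{:}0{:}1)$, lying over $(0{:}1{:}0)\in E(K)$. Weak approximation with Brauer--Manin obstruction off $\infty_K$ is obtained by applying Lemma \ref{lemma fiber criterion for wabm} to $\beta$: condition (1) holds because $E(K)$ is finite, condition (2) because Stoll's conjecture holds for $E$, and condition (3) because every $K$-fiber is a smooth conic with a $K$-point, hence $\cong\PP^1_K$, which satisfies weak approximation. For the negative statement over $L$, I apply Lemma \ref{lemma fiber criterion for not wabm} with $Y=E_L$ and $P=(0{:}4i{:}1)\in E(L)$: the fiber $C_{0,L}$ is the union of two projective lines meeting in one $L$-point, so by Lemma \ref{lemma two projective lines not satifying WA} it fails weak approximation with Brauer--Manin obstruction off $T\cup\infty_L$, forcing the same failure for $X_L$ off $T\cup\infty_L$, hence also off $T$.

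The main obstacle is purely bookkeeping: making sure every input needed to remove the assumption of Stoll's conjecture is genuinely known for this particular $E$, and that no branch/ramification issues sneak in when restricting to $K$- and $L$-fibers. Once the finiteness of $\Sha(E/\QQ)$ and the disjointness of $R$ from the branch locus of $\gamma$ are confirmed, the rest of the proof is a direct transcription of the fibration argument from Theorem \ref{theorem main result: non-invariance of weak approximation with BMO}.
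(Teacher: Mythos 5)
Your proof is correct and follows essentially the same route as the paper: the paper's proof is literally the single line ``This is the same as in the proof of Theorem~\ref{theorem main result: non-invariance of weak approximation with BMO},'' and your write-up just spells out that transcription, verifying unconditionally that $(E,\QQ,\QQ(i))$ is of type $I$ via analytic rank $0$ (as the paper does in the preceding subsection) and checking that $R$ avoids the branch locus of $\gamma$ (also done in the paper's construction). No substantive difference.
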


	\begin{proof}
		This is the same as in the proof of Theorem \ref{theorem main result: non-invariance of weak approximation with BMO}.
	\end{proof}

	%
	%

	\subsection{An explicit unconditional example for Theorem \ref{theorem main result: non-invariance of the Hasse principle with BMO for odd degree}}\label{subsection main example2}
	
	For $K=\QQ$ and $L=\QQ(i),$ in this subsection,
	we will construct a smooth, projective, and geometrically connected surface having properties of Theorem \ref{theorem main result: non-invariance of the Hasse principle with BMO for odd degree}.

	\subsubsection{Construction of a smooth, projective, and geometrically connected surface} We choose odd prime elements $(p_1,p_2,p_3,p_4,p_5,p_6)=(17,13,53,41,3,13)$ as in Example \ref{example: choose prime elements}. Then they satisfies all chosen conditions of Subsubsections \ref{subsection choose a polynomial with respect an extension} and \ref{subsection choose a polynomial with respect an extension2}. Let $f(x_0,x_1;y_0,y_1)=(x_0^2-17x_1^2)(x_0^2-13x_1^2)(x_0^2-221x_1^2)(y_0^2-53y_1^2)(y_0^3-53y_1^3)$ and  $g(x_0,x_1;y_0,y_1)=(x_0^2-41x_1^2)(x_0^2-3x_1^2)(x_0^2-123x_1^2)(y_0^2-13y_1^2)(y_0^3-41y_1^3)$ be two bi-homogeneous polynomials. Let $Z^f$ and $Z^g$ be the zero loci of $f$ and $g$ respectively in $\PP^1\times\PP^1$ with bi-homogeneous coordinates $(x_0:x_1)\times(y_0:y_1).$ Let $(u_0:u_1)\times(x_0:x_1)\times(y_0:y_1)$ be the coordinates of $\PP^1\times\PP^1\times\PP^1,$ and let $s'=u_0g(x_0,x_1;y_0,y_1)+u_1f(x_0,x_1;y_0,y_1)\in \Gamma(\PP^1\times\PP^1\times\PP^1,\Ocal(1,6,5)).$ 
	The locus $X'$ defined by $s'=0$ in $\PP^1\times\PP^1\times\PP^1$ is smooth. Let $R$ be the locus over which the composition $ X'\hookrightarrow \PP^1\times\PP^1\times\PP^1  \stackrel{pr_1}\to\PP^1$ is not smooth. It is finite over $\QQ.$ We can use computer to calculate this locus, and we give the calculation in Appendix \ref{appendix branch locus}.
	Let $B=E\times \PP^1\times \PP^1,$ and let $(\gamma,id)\colon B\to\PP^1\times \PP^1\times \PP^1.$ Let $\Lcal=(\gamma,id)^*\Ocal(1,6,5),$ and let $s=(\gamma,id)^* (s')\in \Gamma(B,\Lcal).$ Let $X$ be the zero locus of $s$ in $B.$ For the locus $R$
	does not intersect with the branch locus of  $\gamma\colon E\to \PP^1,$ the surface $X$ is smooth.
	So it is smooth, projective, and geometrically connected. 
	By our construction, the surface $X$ is defined by the following two equations: 
	\begin{equation*}
		\begin{cases}
			(w_0w_2+w_1^2+16w_2^2)(x_0^2-41x_1^2)(x_0^2-3x_1^2)(x_0^2-123x_1^2)(y_0^2-13y_1^2)(y_0^3-41y_1^3)\\
			+(w_0w_1+w_1w_2)(x_0^2-17x_1^2)(x_0^2-13x_1^2)(x_0^2-221x_1^2)(y_0^2-53y_1^2)(y_0^3-53y_1^3)=0\\ 
			w_1^2w_2=w_0^3-16w_2^3
		\end{cases}
	\end{equation*}
	in $\PP^2\times \PP^1\times \PP^1$ with tri-homogeneous coordinates $(w_0:w_1:w_2)\times(x_0:x_1)\times(y_0:y_1).$
	For this surface $X,$ we have the following proposition.

	\begin{prop}\label{example2: main result: non-invariance of the Hasse principle with BMO for odd degree}
		For $K=\QQ$ and $L=\QQ(i),$  the smooth, projective, and geometrically connected surface $X$ has the following properties.
		\begin{itemize}
			\item The surface $X$ is a counterexample to the Hasse principle, and its failure of the Hasse principle is explained by the Brauer-Manin obstruction.
			\item The surface $X_L$ is a counterexample to the Hasse principle, but its failure of the Hasse principle cannot be explained by the Brauer-Manin obstruction.
		\end{itemize}
	\end{prop}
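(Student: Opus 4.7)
The plan is to observe that the construction given in this subsection is precisely an instance of the one in the proof of Theorem \ref{theorem main result: non-invariance of the Hasse principle with BMO for odd degree}, with the abstract curve $C$ replaced by the explicit elliptic curve $E: w_1^2w_2 = w_0^3 - 16w_2^3$, the abstract morphism $\gamma$ replaced by the explicit rational map given above, and the abstract primes $p_1,\dots,p_6$ replaced by $(17,13,53,41,3,13)$. Hence the proof consists of verifying that all hypotheses required in that theorem hold unconditionally here, and then simply running its argument.

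The first verification is that $(E,\QQ,\QQ(i))$ is a triple of type $I$ in the sense of Definition \ref{definition curve of type}. By the rank $0$ remark following Conjecture \ref{conjecture Stoll}, Stoll's conjecture holds unconditionally for $E$, since both $E$ and its quadratic twist $E^{(-1)}$ are elliptic curves over $\QQ$ of analytic rank $0$; and the computation of $E(\QQ)=\{O\}$ and $E(\QQ(i))=\{O,(0:\pm 4i:1)\}$ shows both are finite and unequal. The second verification is that $\gamma\colon E\to\PP^1$ satisfies the conclusions of Lemma \ref{lemma choose base change morphism} with respect to the non-smooth locus $R$ of $X'\to\PP^1$: namely $\gamma(E(\QQ))=\{\infty\}$, $\gamma(E(L)\setminus E(\QQ))=\{0\}$, and $\gamma$ is \'etale over $R$. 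The first two are direct from the formula, and the third reduces to checking that $R$ (computed in the Appendix) is disjoint from the explicit degree-$12$ branch locus listed in the preceding subsection. The third verification is that $(p_1,\dots,p_6)=(17,13,53,41,3,13)$ meets every condition imposed in Subsubsections \ref{subsection choose a polynomial with respect an extension} and \ref{subsection choose a polynomial with respect an extension2}, which is exactly the content of Example \ref{example: choose prime elements}.

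Once these verifications are in place, the same smoothness, projectivity, and geometric connectedness argument of Theorem \ref{theorem main result: non-invariance of the Hasse principle with BMO for odd degree} applies: $X'\subset\PP^1\times\PP^1\times\PP^1$ is smooth because $Z^f$ and $Z^g$ meet transversally, and $X=(\gamma,\id)^*X'$ is smooth because $\gamma$ is \'etale over $R$; ampleness of the pullback of $\Ocal(1,6,5)$ together with \cite[Chapter III. Corollary 7.9]{Ha97} yields geometric connectedness. The arithmetic part then follows verbatim: for the $K$-side, Lemma \ref{lemma zero dimension scheme without local point} supplies local points away from $v_{p_4}=v_{41}$ and the functoriality of the Brauer--Manin pairing combined with $C(\QQ)=pr^{\infty_K}(C(\AA_\QQ)^{\Br})$ (which now holds unconditionally) kills the set $X(\AA_\QQ)^{\Br}$, since every $\QQ$-fiber of $\beta$ is isomorphic to $Z^g$ which is empty at $v_{41}$; for the $L$-side, $v_{41}$ splits in $L$ so some $\QQ(i)$-fiber of $\beta$ over a point of $E(L)\setminus E(\QQ)$ is $Z^f_L$, which has local points everywhere by Lemma \ref{lemma zero dimension scheme violate HP} and trivial Brauer--Manin set by Lemma \ref{lemma zero dimension scheme trival Brauer group pairing}, while $Z^f(L)=\emptyset$ (again from the splitting conditions) and $Z^g(L)=\emptyset$ (the $v_{41}$-obstruction survives under base change) so $X(L)=\emptyset$.

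The only step requiring honest work beyond quoting the abstract theorem is the transversality/\'etale compatibility check, i.e.\ confirming that the explicit $R$ computed from the bi-homogeneous equations $f,g$ in the Appendix is disjoint from the twelve branch points of $\gamma$ whose $u_0$-coordinates satisfy the degree-$12$ polynomial listed above; this is the main obstacle, but it is a finite computation rather than a conceptual one and is deferred to the Appendix. All other steps are strictly formal consequences of the general construction.
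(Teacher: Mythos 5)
Your proposal is correct and takes essentially the same approach as the paper; the paper's own proof of this proposition is literally the single sentence ``This is the same as in the proof of Theorem \ref{theorem main result: non-invariance of the Hasse principle with BMO for odd degree},'' and you have simply spelled out the unconditional verifications that make that reference valid. One small simplification worth noting: the disjointness of $R$ from the branch locus of $\gamma$, which you flag as the main remaining obstacle, falls out of a degree count once the Appendix computation is in hand — every point $(u_0:1)\in R$ has $[\QQ(u_0):\QQ]\in\{1,4,6,24\}$, while every branch point of $\gamma$ has $[\QQ(u_0):\QQ]=12$, so no overlap is possible.
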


	\begin{proof}
		This is the same as in the proof of Theorem \ref{theorem main result: non-invariance of the Hasse principle with BMO for odd degree}.
	\end{proof}

	\section{Appendix}\label{appendix branch locus}

	\subsection{The locus $R$ in Example \ref{subsection main example2}}

	Let $f(x_0,x_1;y_0,y_1)=(x_0^2-17x_1^2)(x_0^2-13x_1^2)(x_0^2-221x_1^2)(y_0^2-53y_1^2)(y_0^3-53y_1^3)$ and  $g(x_0,x_1;y_0,y_1)=(x_0^2-41x_1^2)(x_0^2-3x_1^2)(x_0^2-123x_1^2)(y_0^2-13y_1^2)(y_0^3-41y_1^3)$ be two bi-homogeneous polynomials. 
	Let $X'$ be the locus defined by $u_0g(x_0,x_1;y_0,y_1)+u_1f(x_0,x_1;y_0,y_1)=0$ in $\PP^1\times\PP^1\times\PP^1$ with tri-homogeneous coordinates $(u_0:u_1)\times(x_0:x_1)\times(y_0:y_1).$ Let $R$ be the locus over which the composition $ X'\hookrightarrow \PP^1\times\PP^1\times\PP^1  \stackrel{pr_1}\to\PP^1$ is not smooth. We will calculate this finite locus $R.$ For $Z^f$ and $Z^g$ are curves with singularity, we have $\{(0:1),(1:0)\}\subset R.$ Next, let $u_1=1.$ We consider affine pieces of $X'.$ 
	
	Let $x_1=1$ and $y_1=1.$ Then this gives an affine piece of $X'$ by $u_0g(x_0,1;y_0,1)+f(x_0,1;y_0,1)=0$ in  $\AA^3$ with affine coordinates $(u_0,x_0,y_0).$
	For fixed $u_0,$ we use Jacobian criterion to calculate the singularity. Then $u_0$ satisfies the following equations:
	\begin{equation*}
		\begin{cases}
			u_0g(x_0,1;y_0,1)+f(x_0,1;y_0,1)=0\\
			u_0\frac{\partial g(x_0,1;y_0,1)}{\partial x_0}+\frac{\partial f(x_0,1;y_0,1)}{\partial x_0}=0\\
			u_0\frac{\partial g(x_0,1;y_0,1)}{\partial y_0}+\frac{\partial f(x_0,1;y_0,1)}{\partial y_0}=0.
		\end{cases}
	\end{equation*}
	Using computer to calculate, we have $u_0=0,$ or $-10553413/620289$ or satisfies one of the following three equations:
	
	$u_0^4 + \frac{442306822591}{11644065108}u_0^3 + \frac{15378563320976329}{38789291891025}u_0^2 + \frac{8833702498605138892}{6891564192638775}u_0 + \frac{1151555233848533056}{7244977740979225}=0,
	$
	
	$u_0^6 - \frac{795599865190}{1146914361}u_0^5 - \frac{852352831544631911}{52055002102707}u_0^4 + \frac{304535075034759072450076}{2362620380435562609 }u_0^3+ \frac{23484429357868605046160829719}{3971564859512180745729}u_0^2 + \frac{8311232379540782587276725670120990}{180257414278679347506402123}u_0 + \frac{959341731692466689320791603186246739997}{8181343261866419545273073156601}=0,
	$
	
	$
	u_0^{24} - \frac{1282484299432205}{828072168642}u_0^{23} + \frac{3122323546639431087642188987593}{5017342803508279669201200}u_0^{22}-\\ \frac{9220867294873355192932709492986698418282151}{152002022053223167005295465603491600}u_0^{21}-\\ \frac{30999681746654846295693028728045879521729132080169271161}{7580165253814008879739256663670726076436640000}u_0^{20} -\\
	\frac{45212516638352229837933187085366694204283058079529344651463540951}{2235578160515817023818667222976763042314131108360640000}u_0^{19} +\\ \frac{18075149338451367526195790572251308674104245881489906934937358864775825826797}{7325858627130126160176176715795586051349128034014867346496000000}u_0^{18} +\\ \frac{1929728458747328554854199670272434548177432513569626746401857766397194600755599}{37545025464041896570902905668452378513164281174326195150792000000}u_0^{17} +\\ \frac{10813082002346392222114449555829223571485436674784220052152543359916740809425000843}{57725476650964415977763217465245531963990082305526525044342700000000}u_0^{16} -\\ \frac{4276548928854862536400602684047575693721206178955942137599822672373098084587625072121}{887529203508577895658109468528150053946347515447470322556769012500000}u_0^{15} -\\ \frac{108138440749666040998151800754157496874442091422159386570663670108546710511792028190429}{1849019173976203949287728059433645945721557323848896505326602109375000}u_0^{14} -\\ \frac{212274800596274205751056409361280330744666951660783687161854079450560419076576526369}{2150022295321167382892707045853076681071578283545228494565816406250}u_0^{13} +\\ \frac{40608008582318322879285505067991388627920915662962695473224278401472213071209607698108369}{17334554756026912024572450557190430741139599911083404737436894775390625} u_0^{12}+\\ \frac{20176896364376034775914854511315952401902515577025172947699198733258383180655210587584504}{1155636983735127468304830037146028716075973327405560315829126318359375}u_0^{11} +\\ \frac{158963792583731661630620955844842160301301960511243192646826835314389326180471483775057248}{5778184918675637341524150185730143580379866637027801579145631591796875} u_0^{10}-\\ \frac{1270266243361503789508103099955850762203604422488301533325846718541482312691517964711577728}{5778184918675637341524150185730143580379866637027801579145631591796875}u_0^9 -\\ \frac{2666552467620466751632153917355955257796687326989260716214069955543289610262819714744442624}{1926061639558545780508050061910047860126622212342600526381877197265625}u_0^8 -\\ \frac{6882635355470258602823490665258239441168362415110817180409141527410617503796536388374673408}{1926061639558545780508050061910047860126622212342600526381877197265625}u_0^7 -\\ \frac{9084247577733305667444515416361134105121434380512329964462666543221760153655964901019889664}{1926061639558545780508050061910047860126622212342600526381877197265625}u_0^6 -\\ \frac{200506323738234616331085970009338835768870364737332320830237673581818041659073075288342528}{71335616279946140018816668959631402226911933790466686162291748046875}u_0^5 -\\ \frac{7334044106882599637223250735958076270786299935006967560560521012299445936011083944820736}{23778538759982046672938889653210467408970644596822228720763916015625}u_0^4 +\\ \frac{2816647995777364092376808098177039066661618029531562491562800915395753104512565060304896}{71335616279946140018816668959631402226911933790466686162291748046875}u_0^3 +\\ \frac{7665757353406683133913491047865070214413497147217395178477629570300922332642644328448}{880686620740075802701440357526313607739653503586008471139404296875}u_0^2 +\\ \frac{251119825007641874397975890381670516864055856553441761611195723154227892347520155648}{528411972444045481620864214515788164643792102151605082683642578125}u_0 +\\ \frac{23272944755213194420743946309558908540171345437132830639580649605274861417105719296}{2642059862220227408104321072578940823218960510758025413418212890625}=0.\\
	$
	
	Let $x_1=1$ and $y_0=1.$ Then this gives an affine piece of $X'$ by $u_0g(x_0,1;1,y_1)+f(x_0,1;1,y_1)=0$ in  $\AA^3$ with affine coordinates $(u_0,x_0,y_1).$
	For fixed $u_0,$ we use Jacobian criterion to calculate the singularity. Then $u_0$ satisfies the following equations:
	\begin{equation*}
		\begin{cases}
			u_0g(x_0,1;1,y_1)+f(x_0,1;1,y_1)=0\\
			u_0\frac{\partial g(x_0,1;1,y_1)}{\partial x_0}+\frac{\partial f(x_0,1;1,y_1)}{\partial x_0}=0\\
			u_0\frac{\partial g(x_0,1;1,y_1)}{\partial y_1}+\frac{\partial f(x_0,1;1,y_1)}{\partial y_1}=0.
		\end{cases}
	\end{equation*}
	Using computer to calculate, we have $u_0=0,$ or $-48841/15129$ or satisfies one of the following three equations:
	
	$
	u_0^4 + \frac{157460599}{21846276}u_0^3 + \frac{1949002009}{136539225}u_0^2 + \frac{398554348}{45513075}u_0^ + \frac{3125824}{15171025}=0,
	$
	
	$
	u_0^6 - \frac{795599865190}{1146914361}u_0^5 - \frac{852352831544631911}{52055002102707}u_0^4 + \frac{304535075034759072450076}{2362620380435562609}u_0^3+ \frac{23484429357868605046160829719}{3971564859512180745729}u_0^2 + \frac{8311232379540782587276725670120990}{180257414278679347506402123}u_0^ + \frac{959341731692466689320791603186246739997}{8181343261866419545273073156601}=0,
	$

	$
	u_0^{24} - \frac{1282484299432205}{828072168642}u_0^{23} + \frac{3122323546639431087642188987593}{5017342803508279669201200}u_0^{22}-\\ \frac{9220867294873355192932709492986698418282151}{152002022053223167005295465603491600}u_0^{21}-\\ \frac{30999681746654846295693028728045879521729132080169271161}{7580165253814008879739256663670726076436640000}u_0^{20} -\\ \frac{45212516638352229837933187085366694204283058079529344651463540951}{2235578160515817023818667222976763042314131108360640000}u_0^{19} +\\ \frac{18075149338451367526195790572251308674104245881489906934937358864775825826797}{7325858627130126160176176715795586051349128034014867346496000000}u_0^{18} +\\ \frac{1929728458747328554854199670272434548177432513569626746401857766397194600755599}{37545025464041896570902905668452378513164281174326195150792000000}u_0^{17} +\\ \frac{10813082002346392222114449555829223571485436674784220052152543359916740809425000843}{57725476650964415977763217465245531963990082305526525044342700000000}u_0^{16} -\\ \frac{4276548928854862536400602684047575693721206178955942137599822672373098084587625072121}{887529203508577895658109468528150053946347515447470322556769012500000}u_0^{15} -\\ \frac{108138440749666040998151800754157496874442091422159386570663670108546710511792028190429}{1849019173976203949287728059433645945721557323848896505326602109375000}u_0^{14} -\\ \frac{212274800596274205751056409361280330744666951660783687161854079450560419076576526369}{2150022295321167382892707045853076681071578283545228494565816406250}u_0^{13} +\\ \frac{40608008582318322879285505067991388627920915662962695473224278401472213071209607698108369}{17334554756026912024572450557190430741139599911083404737436894775390625} u_0^{12}+\\ \frac{20176896364376034775914854511315952401902515577025172947699198733258383180655210587584504}{1155636983735127468304830037146028716075973327405560315829126318359375}u_0^{11} +\\ \frac{158963792583731661630620955844842160301301960511243192646826835314389326180471483775057248}{5778184918675637341524150185730143580379866637027801579145631591796875} u_0^{10}-\\ \frac{1270266243361503789508103099955850762203604422488301533325846718541482312691517964711577728}{5778184918675637341524150185730143580379866637027801579145631591796875}u_0^9 -\\ \frac{2666552467620466751632153917355955257796687326989260716214069955543289610262819714744442624}{1926061639558545780508050061910047860126622212342600526381877197265625}u_0^8 -\\ \frac{6882635355470258602823490665258239441168362415110817180409141527410617503796536388374673408}{1926061639558545780508050061910047860126622212342600526381877197265625}u_0^7 -\\ \frac{9084247577733305667444515416361134105121434380512329964462666543221760153655964901019889664}{1926061639558545780508050061910047860126622212342600526381877197265625}u_0^6 -\\ \frac{200506323738234616331085970009338835768870364737332320830237673581818041659073075288342528}{71335616279946140018816668959631402226911933790466686162291748046875}u_0^5 -\\ \frac{7334044106882599637223250735958076270786299935006967560560521012299445936011083944820736}{23778538759982046672938889653210467408970644596822228720763916015625}u_0^4 +\\ \frac{2816647995777364092376808098177039066661618029531562491562800915395753104512565060304896}{71335616279946140018816668959631402226911933790466686162291748046875}u_0^3 +\\ \frac{7665757353406683133913491047865070214413497147217395178477629570300922332642644328448}{880686620740075802701440357526313607739653503586008471139404296875}u_0^2 +\\ \frac{251119825007641874397975890381670516864055856553441761611195723154227892347520155648}{528411972444045481620864214515788164643792102151605082683642578125}u_0 +\\ \frac{23272944755213194420743946309558908540171345437132830639580649605274861417105719296}{2642059862220227408104321072578940823218960510758025413418212890625}=0.\\
	$
	
	Let $x_0=1$ and $y_1=1.$ Then this gives an affine piece of $X'$ by $u_0g(1,x_1;y_0,1)+f(1,x_1;y_0,1)=0$ in  $\AA^3$ with affine  coordinates $(u_0,x_1,y_0).$
	For fixed $u_0,$ we use Jacobian criterion to calculate the singularity. Then $u_0$ satisfies the following equations:
	\begin{equation*}
		\begin{cases}
			u_0g(1,x_1;y_0,1)+f(1,x_1;y_0,1)=0\\
			u_0\frac{\partial g(1,x_1;y_0,1)}{\partial x_1}+\frac{\partial f(1,x_1;y_0,1)}{\partial x_1}=0\\
			u_0\frac{\partial g(1,x_1;y_0,1)}{\partial y_0}+\frac{\partial f(1,x_1;y_0,1)}{\partial y_0}=0.
		\end{cases}
	\end{equation*}
	Using computer to calculate, we have $u_0=0,$ or $-2809/533$ or satisfies one of the following three equations:

	$
	u_0^4 + \frac{442306822591}{11644065108}u_0^3 + \frac{15378563320976329}{38789291891025}u_0^2 + \frac{8833702498605138892}{6891564192638775}u_0^ + \frac{1151555233848533056}{7244977740979225}=0,
	$
	
	$
	u_0^6 - \frac{16289590}{75809}u_0^5 - \frac{357314231}{227427}u_0^4 + \frac{2613868156}{682281}u_0^3 + \frac{4127069879}{75809}u_0^2 + \frac{29904922990}{227427}u_0^ +\frac{70675038317}{682281}=0,
	$

	$
	u_0^{24} - \frac{1282484299432205}{828072168642}u_0^{23} + \frac{3122323546639431087642188987593}{5017342803508279669201200}u_0^{22}-\\ \frac{9220867294873355192932709492986698418282151}{152002022053223167005295465603491600}u_0^{21}-\\ \frac{30999681746654846295693028728045879521729132080169271161}{7580165253814008879739256663670726076436640000}u_0^{20} -\\ \frac{45212516638352229837933187085366694204283058079529344651463540951}{2235578160515817023818667222976763042314131108360640000}u_0^{19} +\\ \frac{18075149338451367526195790572251308674104245881489906934937358864775825826797}{7325858627130126160176176715795586051349128034014867346496000000}u_0^{18} +\\ \frac{1929728458747328554854199670272434548177432513569626746401857766397194600755599}{37545025464041896570902905668452378513164281174326195150792000000}u_0^{17} +\\ \frac{10813082002346392222114449555829223571485436674784220052152543359916740809425000843}{57725476650964415977763217465245531963990082305526525044342700000000}u_0^{16} -\\ \frac{4276548928854862536400602684047575693721206178955942137599822672373098084587625072121}{887529203508577895658109468528150053946347515447470322556769012500000}u_0^{15} -\\ \frac{108138440749666040998151800754157496874442091422159386570663670108546710511792028190429}{1849019173976203949287728059433645945721557323848896505326602109375000}u_0^{14} -\\ \frac{212274800596274205751056409361280330744666951660783687161854079450560419076576526369}{2150022295321167382892707045853076681071578283545228494565816406250}u_0^{13} +\\ \frac{40608008582318322879285505067991388627920915662962695473224278401472213071209607698108369}{17334554756026912024572450557190430741139599911083404737436894775390625} u_0^{12}+\\ \frac{20176896364376034775914854511315952401902515577025172947699198733258383180655210587584504}{1155636983735127468304830037146028716075973327405560315829126318359375}u_0^{11} +\\ \frac{158963792583731661630620955844842160301301960511243192646826835314389326180471483775057248}{5778184918675637341524150185730143580379866637027801579145631591796875} u_0^{10}-\\ \frac{1270266243361503789508103099955850762203604422488301533325846718541482312691517964711577728}{5778184918675637341524150185730143580379866637027801579145631591796875}u_0^9 -\\ \frac{2666552467620466751632153917355955257796687326989260716214069955543289610262819714744442624}{1926061639558545780508050061910047860126622212342600526381877197265625}u_0^8 -\\ \frac{6882635355470258602823490665258239441168362415110817180409141527410617503796536388374673408}{1926061639558545780508050061910047860126622212342600526381877197265625}u_0^7 -\\ \frac{9084247577733305667444515416361134105121434380512329964462666543221760153655964901019889664}{1926061639558545780508050061910047860126622212342600526381877197265625}u_0^6 -\\ \frac{200506323738234616331085970009338835768870364737332320830237673581818041659073075288342528}{71335616279946140018816668959631402226911933790466686162291748046875}u_0^5 -\\ \frac{7334044106882599637223250735958076270786299935006967560560521012299445936011083944820736}{23778538759982046672938889653210467408970644596822228720763916015625}u_0^4 +\\ \frac{2816647995777364092376808098177039066661618029531562491562800915395753104512565060304896}{71335616279946140018816668959631402226911933790466686162291748046875}u_0^3 +\\ \frac{7665757353406683133913491047865070214413497147217395178477629570300922332642644328448}{880686620740075802701440357526313607739653503586008471139404296875}u_0^2 +\\ \frac{251119825007641874397975890381670516864055856553441761611195723154227892347520155648}{528411972444045481620864214515788164643792102151605082683642578125}u_0 +\\ \frac{23272944755213194420743946309558908540171345437132830639580649605274861417105719296}{2642059862220227408104321072578940823218960510758025413418212890625}=0.\\
	$

	Let $x_0=1$ and $y_0=1.$ Then this gives an affine piece of $X'$ by $u_0g(1,x_1;1,y_1)+f(1,x_1;1,y_1)=0$ in  $\AA^3$ with affine  coordinates $(u_0,x_1,y_1).$
	For fixed $u_0,$ we use Jacobian criterion to calculate the singularity. Then $u_0$ satisfies the following equations:
	\begin{equation*}
		\begin{cases}
			u_0g(1,x_1;1,y_1)+f(1,x_1;1,y_1)=0\\
			u_0\frac{\partial g(1,x_1;1,y_1)}{\partial x_1}+\frac{\partial f(1,x_1;1,y_1)}{\partial x_1}=0\\
			u_0\frac{\partial g(1,x_1;1,y_1)}{\partial y_1}+\frac{\partial f(1,x_1;1,y_1)}{\partial y_1}=0.
		\end{cases}
	\end{equation*}
	Using computer to calculate, we have $u_0=0,$ or $-1$ or satisfies one of the following three equations:
	
	$u_0^4 + \frac{157460599}{21846276}u_0^3 + \frac{1949002009}{136539225}u_0^2 + \frac{398554348}{45513075}u_0^ + \frac{3125824}{15171025}=0,
	$
	
	$u_0^6 - \frac{16289590}{75809}u_0^5 - \frac{357314231}{227427}u_0^4 + \frac{2613868156}{682281}u_0^3 + \frac{4127069879}{75809}u_0^2 + \frac{29904922990}{227427}u_0^ + \frac{70675038317}{682281}=0,
	$

	$
	u_0^{24} - \frac{1282484299432205}{828072168642}u_0^{23} + \frac{3122323546639431087642188987593}{5017342803508279669201200}u_0^{22}-\\ \frac{9220867294873355192932709492986698418282151}{152002022053223167005295465603491600}u_0^{21}-\\ \frac{30999681746654846295693028728045879521729132080169271161}{7580165253814008879739256663670726076436640000}u_0^{20} -\\ \frac{45212516638352229837933187085366694204283058079529344651463540951}{2235578160515817023818667222976763042314131108360640000}u_0^{19} +\\ \frac{18075149338451367526195790572251308674104245881489906934937358864775825826797}{7325858627130126160176176715795586051349128034014867346496000000}u_0^{18} +\\ \frac{1929728458747328554854199670272434548177432513569626746401857766397194600755599}{37545025464041896570902905668452378513164281174326195150792000000}u_0^{17} +\\ \frac{10813082002346392222114449555829223571485436674784220052152543359916740809425000843}{57725476650964415977763217465245531963990082305526525044342700000000}u_0^{16} -\\ \frac{4276548928854862536400602684047575693721206178955942137599822672373098084587625072121}{887529203508577895658109468528150053946347515447470322556769012500000}u_0^{15} -\\ \frac{108138440749666040998151800754157496874442091422159386570663670108546710511792028190429}{1849019173976203949287728059433645945721557323848896505326602109375000}u_0^{14} -\\ \frac{212274800596274205751056409361280330744666951660783687161854079450560419076576526369}{2150022295321167382892707045853076681071578283545228494565816406250}u_0^{13} +\\ \frac{40608008582318322879285505067991388627920915662962695473224278401472213071209607698108369}{17334554756026912024572450557190430741139599911083404737436894775390625} u_0^{12}+\\ \frac{20176896364376034775914854511315952401902515577025172947699198733258383180655210587584504}{1155636983735127468304830037146028716075973327405560315829126318359375}u_0^{11} +\\ \frac{158963792583731661630620955844842160301301960511243192646826835314389326180471483775057248}{5778184918675637341524150185730143580379866637027801579145631591796875} u_0^{10}-\\ \frac{1270266243361503789508103099955850762203604422488301533325846718541482312691517964711577728}{5778184918675637341524150185730143580379866637027801579145631591796875}u_0^9 -\\ \frac{2666552467620466751632153917355955257796687326989260716214069955543289610262819714744442624}{1926061639558545780508050061910047860126622212342600526381877197265625}u_0^8 -\\ \frac{6882635355470258602823490665258239441168362415110817180409141527410617503796536388374673408}{1926061639558545780508050061910047860126622212342600526381877197265625}u_0^7 -\\ \frac{9084247577733305667444515416361134105121434380512329964462666543221760153655964901019889664}{1926061639558545780508050061910047860126622212342600526381877197265625}u_0^6 -\\ \frac{200506323738234616331085970009338835768870364737332320830237673581818041659073075288342528}{71335616279946140018816668959631402226911933790466686162291748046875}u_0^5 -\\ \frac{7334044106882599637223250735958076270786299935006967560560521012299445936011083944820736}{23778538759982046672938889653210467408970644596822228720763916015625}u_0^4 +\\ \frac{2816647995777364092376808098177039066661618029531562491562800915395753104512565060304896}{71335616279946140018816668959631402226911933790466686162291748046875}u_0^3 +\\ \frac{7665757353406683133913491047865070214413497147217395178477629570300922332642644328448}{880686620740075802701440357526313607739653503586008471139404296875}u_0^2 +\\ \frac{251119825007641874397975890381670516864055856553441761611195723154227892347520155648}{528411972444045481620864214515788164643792102151605082683642578125}u_0 +\\ \frac{23272944755213194420743946309558908540171345437132830639580649605274861417105719296}{2642059862220227408104321072578940823218960510758025413418212890625}=0.\\
	$
	
	In summary, the locus $R=\{(0:1),(1:0),(-10553413:620289),(-48841:15129), (-2809:533),(-1:1)\}\cup \{(u_0:1)|u_0$ satisfies one of the following five equations $\}.$

	$u_0^4 + \frac{442306822591}{11644065108}u_0^3 + \frac{15378563320976329}{38789291891025}u_0^2 + \frac{8833702498605138892}{6891564192638775}u_0 + \frac{1151555233848533056}{7244977740979225}=0,
	$
	
	$
	u_0^4 + \frac{157460599}{21846276}u_0^3 + \frac{1949002009}{136539225}u_0^2 + \frac{398554348}{45513075}u_0^ + \frac{3125824}{15171025}=0,
	$
	
	$u_0^6 - \frac{795599865190}{1146914361}u_0^5 - \frac{852352831544631911}{52055002102707}u_0^4 + \frac{304535075034759072450076}{2362620380435562609 }u_0^3+ \frac{23484429357868605046160829719}{3971564859512180745729}u_0^2 + \frac{8311232379540782587276725670120990}{180257414278679347506402123}u_0 + \frac{959341731692466689320791603186246739997}{8181343261866419545273073156601}=0,
	$

	$
	u_0^6 - \frac{16289590}{75809}u_0^5 - \frac{357314231}{227427}u_0^4 + \frac{2613868156}{682281}u_0^3 + \frac{4127069879}{75809}u_0^2 + \frac{29904922990}{227427}u_0^ +\frac{70675038317}{682281}=0,
	$

	$
	u_0^{24} - \frac{1282484299432205}{828072168642}u_0^{23} + \frac{3122323546639431087642188987593}{5017342803508279669201200}u_0^{22}-\\ \frac{9220867294873355192932709492986698418282151}{152002022053223167005295465603491600}u_0^{21}-\\ \frac{30999681746654846295693028728045879521729132080169271161}{7580165253814008879739256663670726076436640000}u_0^{20} -\\ \frac{45212516638352229837933187085366694204283058079529344651463540951}{2235578160515817023818667222976763042314131108360640000}u_0^{19} +\\ \frac{18075149338451367526195790572251308674104245881489906934937358864775825826797}{7325858627130126160176176715795586051349128034014867346496000000}u_0^{18} +\\ \frac{1929728458747328554854199670272434548177432513569626746401857766397194600755599}{37545025464041896570902905668452378513164281174326195150792000000}u_0^{17} +\\ \frac{10813082002346392222114449555829223571485436674784220052152543359916740809425000843}{57725476650964415977763217465245531963990082305526525044342700000000}u_0^{16} -\\ \frac{4276548928854862536400602684047575693721206178955942137599822672373098084587625072121}{887529203508577895658109468528150053946347515447470322556769012500000}u_0^{15} -\\ \frac{108138440749666040998151800754157496874442091422159386570663670108546710511792028190429}{1849019173976203949287728059433645945721557323848896505326602109375000}u_0^{14} -\\ \frac{212274800596274205751056409361280330744666951660783687161854079450560419076576526369}{2150022295321167382892707045853076681071578283545228494565816406250}u_0^{13} +\\ \frac{40608008582318322879285505067991388627920915662962695473224278401472213071209607698108369}{17334554756026912024572450557190430741139599911083404737436894775390625} u_0^{12}+\\ \frac{20176896364376034775914854511315952401902515577025172947699198733258383180655210587584504}{1155636983735127468304830037146028716075973327405560315829126318359375}u_0^{11} +\\ \frac{158963792583731661630620955844842160301301960511243192646826835314389326180471483775057248}{5778184918675637341524150185730143580379866637027801579145631591796875} u_0^{10}-\\ \frac{1270266243361503789508103099955850762203604422488301533325846718541482312691517964711577728}{5778184918675637341524150185730143580379866637027801579145631591796875}u_0^9 -\\ \frac{2666552467620466751632153917355955257796687326989260716214069955543289610262819714744442624}{1926061639558545780508050061910047860126622212342600526381877197265625}u_0^8 -\\ \frac{6882635355470258602823490665258239441168362415110817180409141527410617503796536388374673408}{1926061639558545780508050061910047860126622212342600526381877197265625}u_0^7 -\\ \frac{9084247577733305667444515416361134105121434380512329964462666543221760153655964901019889664}{1926061639558545780508050061910047860126622212342600526381877197265625}u_0^6 -\\ \frac{200506323738234616331085970009338835768870364737332320830237673581818041659073075288342528}{71335616279946140018816668959631402226911933790466686162291748046875}u_0^5 -\\ \frac{7334044106882599637223250735958076270786299935006967560560521012299445936011083944820736}{23778538759982046672938889653210467408970644596822228720763916015625}u_0^4 +\\ \frac{2816647995777364092376808098177039066661618029531562491562800915395753104512565060304896}{71335616279946140018816668959631402226911933790466686162291748046875}u_0^3 +\\ \frac{7665757353406683133913491047865070214413497147217395178477629570300922332642644328448}{880686620740075802701440357526313607739653503586008471139404296875}u_0^2 +\\ \frac{251119825007641874397975890381670516864055856553441761611195723154227892347520155648}{528411972444045481620864214515788164643792102151605082683642578125}u_0 +\\ \frac{23272944755213194420743946309558908540171345437132830639580649605274861417105719296}{2642059862220227408104321072578940823218960510758025413418212890625}=0.\\
	$
	
	Let $(u_0:1)$ be a closed point in $R,$ then the degree $[\QQ(u_0):\QQ]\in \{1,4,6,24\}.$

	\begin{footnotesize}
		\noindent\textbf{Acknowledgements.} The author would like to thank my thesis advisor Y. Liang for proposing the related problems, papers and many fruitful discussions. This paper was inspired by the work of Harpaz and Skorobogatov \cite{HS14}. The author was partially supported by NSFC Grant No. 12071448.
	\end{footnotesize}

	\begin{bibdiv}
		\begin{biblist}
			
			\bib{Ha97}{book}{
				author={Hartshorne, R.},
				title={Algebraic geometry},
				series={Graduate Texts in Mathematics},
				publisher={Springer-Verlag},
				date={1997},
				volume={52},
			}
			
			\bib{HS14}{article}{
				author={Harpaz, Y.},
				author={Skorobogatov, A.},
				title={Singular curves and the \'etale {B}rauer-{M}anin obstruction for
					surfaces},
				date={2014},
				journal={Ann. Sci. \'{E}c. Norm. Sup\'er.},
				volume={47},
				pages={765\ndash 778},
			}
			
			\bib{Li18}{article}{
				author={Liang, Y.},
				title={Non-invariance of weak approximation properties under extension
					of the ground field},
				date={2018},
				journal={Preprint, arXiv:1805.08851v1 [math.NT]},
			}
			
			\bib{LW54}{article}{
				author={Lang, S.},
				author={Weil, A.},
				title={Number of points of varieties in finit fields},
				date={1954},
				journal={Amer. J. Math.},
				volume={76},
				pages={819\ndash 827},
			}
			
			\bib{Ma71}{book}{
				author={Manin, Y.},
				title={Le groupe de {B}rauer-{G}rothendieck en g\'eom\'etrie
					diophantienne. {I}n},
				subtitle={Actes du {C}ongr\`es {I}nternational des {M}ath\'ematiciens},
				language={French},
				publisher={Gauthier-Villars},
				date={1971},
				volume={1},
				note={pp. 401-411},
			}
			
			\bib{Mi80}{book}{
				author={Milne, J.},
				title={\'{E}tale cohomology},
				publisher={Princeton University Press},
				date={1980},
			}
			
			\bib{Ne99}{book}{
				author={Neukirch, J.},
				title={Algebraic number theory},
				publisher={Springer-Verlag},
				date={1999},
			}
			
			\bib{Po10}{article}{
				author={Poonen, B.},
				title={Insufficiency of the {B}rauer-{M}anin obstruction applied to
					\'etale covers},
				date={2010},
				journal={Ann. of Math.},
				volume={171},
				pages={2157\ndash 2169},
			}
			
			\bib{Sc99}{article}{
				author={Scharaschkin, V.},
				title={Local-global problems and the {B}rauer-{M}anin obstruction},
				date={1999},
				journal={Thesis, University of Michigan},
			}
			
			\bib{Sk01}{book}{
				author={Skorobogatov, A.},
				title={Torsors and rational points},
				series={Cambridge Tracts in Mathematics},
				publisher={Cambridge University Press},
				date={2001},
				volume={144},
			}
			
			\bib{St12}{book}{
				author={Stein, W.},
				title={Sage for power users},
				publisher={https://www.sagemath.org/},
				date={2012},
			}
			
			\bib{St07}{article}{
				author={Stoll, M.},
				title={Finite descent obstructions and rational points on curves},
				date={2007},
				journal={Algebra Number Theory},
				volume={1},
				pages={349\ndash 391},
			}
			
			\bib{Wu21}{article}{
				author={Wu, H.},
				title={Non-invariance of the {B}rauer-{M}anin obstruction for surfaces},
				date={2021},
				journal={Preprint, arXiv:2103.01784v2 [math.NT]},
			}
			
		\end{biblist}
	\end{bibdiv}


\end{document}